\documentclass[12pt, a4paper,reqno]{amsart}
\usepackage{amsmath,amssymb,amsthm,color, euscript, enumerate, hhline}
\usepackage{url}

\theoremstyle{plain}

\newtheorem{theorem}{Theorem}[section]
\newtheorem{proposition}[theorem]{Proposition}
\newtheorem{lemma}[theorem]{Lemma}

\theoremstyle{definition}
\newtheorem{definition}[theorem]{Definition}

\theoremstyle{remark}

\numberwithin{equation}{section}

\address[T. Kondo] {Graduate School of Science and Technology,
Kumamoto University, Kumamoto 860-8555, Japan} 
  \email{258d9001@st.kumamoto-u.ac.jp}

\title[Lattices from Construction A and B]{Classification of isomorphism classes of lattices from Construction A and B}

\author{Takara\ Kondo} 

\begin{document}
\maketitle
\begin{abstract}
In this paper, we completely classify the isomorphism classes of certain lattices $L_A(C)$ and $L_B(C)$ from a self-orthogonal code $C$ over the finite field $\mathbb{F}_p$, where $p$ is an odd prime. These lattices are obtained by \emph{Construction A} and \emph{B} for a code $C$ over $\mathbb{F}_p$ introduced by Lam and Shimakura, which arose from a study of orbifolds of lattice vertex operator algebras.  
For self-orthogonal codes $C$ and $D$ of the same length over $\mathbb{F}_p$, we show that $L_X(C) \cong L_X(C)$ as lattices if and only if $C \cong D$ as codes, where $X=A$ or $B$.
This can be expected to be lattice analogues of classifications of the isomorphism classes of lattice vertex operator algebras and its orbifolds. 
To prove the result, we generalize the notion of a frame of a lattice and define some codes which are analogues of codes constructed from Kleinian codes studied by H{\"o}hn.

\end{abstract}

\section{Introduction}
The lattice vertex operator algebra (VOA) is an important class of VOAs \cite{FLM}.  
Every even lattice $L$ has the automorphism defined by scalar multiplication $-1$, and it can be uniquely lifted to an automorphism of order two of the lattice VOA $V_L$, up to conjugation.  
In general, for a VOA and a subgroup of the automorphism group, we obtain the fixed-point subalgebra.  
Such a VOA is called an \emph{orbifold}.  
The VOA $V_L^+$ denotes the orbifold of $V_L$ obtained from a lift of $-1$.

In \cite{Shima}, the isomorphism classes of lattice VOAs $V_L$ associated with even lattices $L$ and the orbifolds $V_L^+$ were completely determined. Let $L$ and $N$ be even lattices of the same rank. If  $L \cong N$ as lattices, then we clearly have $V_L \cong V_N$ and $V_L^+ \cong V_N^+$ as VOAs. Hence, the main problem for the classification is to determine the non-isomorphic even lattices of the same rank which give the isomorphic lattice VOAs and orbifolds.
In \cite{Shima}, it was shown that $V_L \cong V_N$ as VOAs if and only if $L \cong N$ as lattices, and  $V_L^+ \cong V_N^+$ as VOAs if and only if $L \cong N$ as lattices or $\{L,N\} =\{E_8^2, D_{16}^+\}$. Here $E_8^2$ and $D_{16}^+$ are the even unimodular lattices of rank $16$. Since there are many analogies among codes, lattices, and VOAs \cite{Ho95, Ho, Ho08, Sh11}, it is natural to consider lattice analogues of the classification. 
Let $L_A(C)$ and $L_B(C)$ denote the lattices constructed by Construction A and B from a binary code $C$ \cite[Chapter 7]{CS}.
In \cite{KKM, Shima}, it was shown that for doubly-even binary codes $C$ and $D$ of the same length, $L_A(C) \cong L_A(D)$ as lattices if and only if $C \cong D$ as codes, and $L_B(C) \cong L_B(D)$ if and only if $C\cong D$ as codes or $\{C,D\}=\{e_8^2, d_{16}^+\}$. Here $e_8^2$ and $d_{16}^+$ are the doubly-even self-dual binary codes of length $16$. 

An automorphism of a lattice is called \emph{fixed-point free} if it fixes only the zero vector.  
Every lattice has the unique fixed-point free automorphism of order two, which is the scalar multiplication $-1$. However, such transformations of odd prime order $p$ do not exist in general.
If an even lattice $L$ has one, then it can be uniquely lifted to an automorphism of order $p$ of the VOA $V_L$, up to conjugation (cf.\ \cite[PROPOSITION 4.6]{LS1}). Through a study of the orbifold of $V_L$, Lam and Shimakura generalized the lattices $L_A(C)$ and $L_B(C)$ for binary codes $C$ to the ones for codes over any finite field $\mathbb{F}_p$ of prime order $p$ \cite{LS}.  
The lattice $L_A(C)$ from a code $C$ over $\mathbb{F}_p$ has natural fixed-point free automorphisms of order $p$, which are those of $L_B(C)$ under a certain condition. 
For an odd prime $p$, we can expect that $L_A(C)$ and $L_B(C)$ from self-orthogonal codes $C$ over $\mathbb{F}_p$ are lattice analogues of the VOA $V_L$ associated with even lattices $L$ and the orbifold $V_L^{\langle \hat{g}\rangle}$ obtained from lifts $\hat{g}$ of fixed-point free automorphisms $g$ of order $p$, respectively.

In this paper, motivated by the above, we completely determine the isomorphism classes of $L_A(C)$ and $L_B(C)$ from self-orthogonal codes over $\mathbb{F}_p$ for any odd prime $p$. Our main result is as follows:

\begin{theorem}\label{main theorem}
Let $p$ be an odd prime,  $C$ and $D$ self-orthogonal codes of length $k$ over $\mathbb{F}_p$. Then 
$L_X(C) \cong L_X(D)$ as lattices if and only if $C \cong D$ as codes, where $X=A$ or $B$.


\end{theorem}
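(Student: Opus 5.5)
The ``only if'' direction is immediate. A code isomorphism $C\to D$ is a coordinate permutation combined with the coordinatewise symmetries allowed in the definition of code equivalence, and each such symmetry is induced by an isometry of $A_{p-1}^k$ (or of its sublattice used in Construction~B) preserving the gluing by codewords. So it extends to an isometry $L_X(C)\cong L_X(D)$.

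For the converse I will generalize the notion of a frame. Recall that $L_A(C)$ is the union of the cosets of $A_{p-1}^k$ in $(A_{p-1}^*)^k$ indexed by the codewords of $C$, via $A_{p-1}^*/A_{p-1}\cong\mathbb{F}_p$; and $L_B(C)$ is the index-$p$ sublattice cut out by the Lam--Shimakura condition. Define a \emph{frame} of a lattice $L$ of rank $k(p-1)$ to be an ordered family $(R_1,\dots,R_k)$ of mutually orthogonal sublattices with the following properties:
\begin{enumerate}[(i)]
\item each $R_i\cong A_{p-1}$ (for $X=B$, the appropriate sublattice of the Construction~B frame);
\item $R:=R_1\perp\cdots\perp R_k\subseteq L\subseteq R^*$.
\end{enumerate}
To each frame attach the code $C(R)=L/R\subseteq (R^*/R)\cong\mathbb{F}_p^k$. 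This code is well defined up to the equivalences allowed on each coordinate, because the identifications $R_i^*/R_i\cong\mathbb{F}_p$ are unique up to $\operatorname{Aut}(A_{p-1})$. By construction, the standard frame of $L_X(C)$ returns $C$. An isometry $f\colon L_X(C)\to L_X(D)$ carries the standard frame of $L_X(C)$ to a frame of $L_X(D)$ whose code is equivalent to $C$. It therefore suffices to prove the following key claim: every frame of $L_X(D)$ has code equivalent to $D$.

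To prove the claim I will analyse the short vectors of $L_X(D)$. Self-orthogonality of $D$ makes the glue vectors integral, and a coset $d+R$ has minimal norm $\sum_i a_i(p-a_i)/p$. Since $a(p-a)/p\ge (p-1)/p$ for every nonzero $a$, extra roots coming from codewords can appear only when $p$ is small relative to the weights, essentially $p=3$ with codewords of weight $3$. So for $p\ge 5$ I expect the root system of $L_X(D)$ to be exactly that of the standard frame. The $R_i$ of any frame are then forced to be its irreducible components, so the frame coincides with the standard one up to permutation, and the claim follows.

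The main obstacle is the small case, above all $p=3$, where weight-$3$ codewords create extra roots and the root system can strictly contain $A_2^k$. There I will imitate H\"ohn's Kleinian-code argument. Given two frames $R$ and $R'$ of the same lattice, define auxiliary codes recording how each $R'_j$ sits relative to the components of the full root system, namely which $R_i$'s it meets and through which glue. I will then show that the transition from $R$ to $R'$ is a composite of ``local'' moves inside each irreducible component of the root system, for instance inside $E_6$, $E_8$, or $A_2^{m}$ glued by a short code. For each such component one checks directly that all its $A_2$-frames yield equivalent local codes, and assembling these local equivalences gives $C(R')\cong C(R)$. Controlling these components, and ruling out frames that mix coordinates from different components, is the step I expect to require the most work.
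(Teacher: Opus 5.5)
There is a genuine gap, and it is in the case $X=B$, where most of the work lies. For $X=A$ your route matches the paper's (any $A_{p-1}$-frame can be moved to the standard one, which gives a code isomorphism). For $X=B$ the proposal has no working mechanism.

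\textbf{The frame notion does not transfer to $L_B$.} $L_B(C)$ does not contain $A_{p-1}^k$, since $(\alpha_1^1,\chi_\Delta)=1/p$. Moreover $L_B(C)\cap A_{p-1}^k=L_B(\{\boldsymbol{0}\})$ contains no roots at all: every root $\beta$ of $A_{p-1}$ has $(\beta,\rho_\Delta)=\operatorname{ht}(\beta)\in\{\pm1,\ldots,\pm(p-1)\}$. So the root-system analysis you propose cannot detect a ``Construction~B frame'', and nothing forces an isometry $L_B(C)\to L_B(D)$ to respect one. This is a real obstruction. $L_B$ of one code can be $L_A$ of another: $L_B(C_A(K))\cong L_A(C_B(K))$ for $p=3$, and $L_B(d_5^m)\cong L_A((d_5^m)_0)$ for $p=5$. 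In the binary analogue, an exceptional isomorphism $L_B(e_8^2)\cong L_B(d_{16}^+)$ genuinely occurs.

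\textbf{The missing idea.} Extend $f$ to the duals $L_B(C)^*=L_A(C^\perp)+\mathbb{Z}\chi_\Delta$ and locate the root $f(\alpha_1^1)$. There are three cases.
\begin{enumerate}
\item If $f(\alpha_1^1)\in A_{p-1}^k(2)$, then $f$ maps $L_A(C)=L_B(C)+\mathbb{Z}\alpha_1^1$ onto $L_A(D)$, and the case $X=A$ applies.
\item If $f(\alpha_1^1)$ is a non-frame root of $L_A(D^\perp)$ (possible only for $p=3,5$), compare root counts in $L_B(C)$, $L_B(D)$ and in the cosets $\alpha_1^1+L_B(C)$, $f(\alpha_1^1)+L_B(D)$. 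This forces $D\cong C_B(K)$ and $D+\mathbb{F}_3u_1\cong C_A(K)$ for $p=3$ (resp.\ $D\cong(d_5^m)_0$ and $D+\mathbb{F}_5v_1\cong d_5^m$ for $p=5$). The isomorphisms above then again give $L_A(C)\cong L_A(D)$. This, not the case $X=A$, is where the Kleinian-code analogues are needed.
\item If $f(\alpha_1^1)\in L_A(D^\perp)+l\chi_\Delta$ with $l\neq0$, the bound $N_1(\frac{l}{p}\rho_\Delta+A_{p-1}^*)\ge (p^2-1)/(12p)$ forces $k\le 9,5,3,2$ for $p=3,5,7,11$. It leaves nothing nontrivial for $p\ge13$. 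These short lengths are settled by classifications of self-orthogonal codes and computer checks.
\end{enumerate}

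\textbf{An error inside the case $X=A$: $p=5$ is not root-free.} By your own coset formula, a weight-$2$ word with entries $\pm1,\pm2$ has coset minimum $4/5+6/5=2$. Every weight-$2$ word of a self-orthogonal code over $\mathbb{F}_5$ has this shape, since $a^2+b^2\equiv0\pmod 5$. Indeed $\mathbb{Z}\lambda+A_4^2\cong E_8$. So for $p=5$ you must show that every $A_4^2$ inside such an $E_8$ is carried to the standard one by an automorphism of the whole lattice. The paper does this by showing $W(E_8)$ is transitive on chains of four roots spanning $A_4$.

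\textbf{Assembling local codes for $p=3$.} Matching ``local codes'' component by component does not by itself yield $C(R')\cong C(R)$, because codewords glue different components of the root system together. What you need is that the moves are global automorphisms. Elements of the Weyl group of $L_A(D)(2)$ lie in $\operatorname{Aut}(L_A(D))$ and can carry any $A_2$-frame to the standard one; the paper does this by an explicit induction with reflections. After that, the code isomorphism follows by matching bases in each $A_{p-1}$ component, using the diagram automorphism for sign changes.
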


The above theorem for $X=A$ is a lattice analogue of the classification of the isomorphism classes of lattice VOAs and the theorem for $X=B$ can be expected to be a lattice analogue of the classification for the orbifolds obtained from lifts of fixed-point free automorphisms of order $p$.

Let us explain our method to prove the result. Set $p$, $C$, $D$, and $X$ be as in Theorem \ref{main theorem}. If $C \cong D$ as codes, we clearly have $L_X(C)  \cong L_X(D)$ as lattices. Therefore, the main problem in Theorem \ref{main theorem} is to show the converses. 
For this purpose, we generalize the notion of Type A frame in \cite{KKM} by using the root system of type $A_n$, which is called \emph{$A_n$-frame}. 
By the definition of Construction A, the lattice $L_A(C)$ has an $A_{p-1}$-frame.
The automorphism group $\operatorname{Aut}(L_A(C))$ acts on the set of all $A_{p-1}$-frames of $L_A(C)$.
We show that the action is transitive and that the transitivity implies Theorem \ref{main theorem} for $X=A$. In particular, for the case $p=5$, we study the action of the Weyl group on certain subsets of roots of the lattice $E_8$.    
To prove Theorem \ref{main theorem} for $X = B$ in the cases $p=3,5$, we define some codes which are analogues of codes constructed by \text{Construction} A and B from Kleinian codes in \cite[Section 7]{Ho} and \cite[Subsection 1.5]{Shima}. By using properties of the codes, we show that in most cases an isomorphism between $L_B(C)$ and $L_B(D)$ induces one between $L_A(C)$ and $L_A(D)$. Hence, by the theorem for $X=A$, we have the desired code isomorphism.
The remaining cases are verified by using  MAGMA \cite{MAGMA} and direct calculations.

The organization of this paper is as follows.
In Section~2, we review basic notions on codes, lattices, and the definitions of Construction A and B.
We also give lower bounds of norms of vectors related to lattices constructed by Construction A and B.
These bounds are used to restrict possible cases in the proofs of Theorem \ref{main theorem}.
In Section~3, we introduce the notion of $A_n$-frame.
Furthermore, we show that the action of the automorphism group $\operatorname{Aut}(L_A(C))$ is transitive and that the transitivity implies Theorem~\ref{main theorem} for $X=A$. In Section~4, for the cases $p=3,5$, we define some codes which are analogues of codes constructed by Construction A and B from Kleinian codes.
Moreover, we show some properties of the codes to prove Theorem~\ref{main theorem} for $X=B$.

\section{Preliminaries}\label{Preliminaries}
\subsection{Codes}
Let $p$ be a prime and  $\mathbb{F}_p=\{0,1,\ldots, p-1\}$ the finite field with $p$ elements. A subspace of $\mathbb{F}_p^{k}$ is called a \emph{code of length $k$ over $\mathbb{F}_p$}. For a code $C \subset \mathbb{F}_p^{k}$, we define the \emph{dual code} $C^{\perp}$ of $C$ by 
\[C^{\perp} = \{x \in \mathbb{F}_p^{k} \mid (x,c) =0\ \text{for any}\ c \in C\},\] 
where $(x,y)=\sum_{i=1}^{k} x_iy_i$ for $x=(x_1, x_2, \ldots, x_k)$, $y=(y_1, y_2, \ldots, y_k) \in \mathbb{F}_p^{k}$. 
 A code $C \subset \mathbb{F}_p^{k}$ is called \emph{self-orthogonal} if $C \subset C^{\perp}$.  For a non-negative integer $n$ and a code $C \subset \mathbb{F}_p^{k}$, the subset $C(n)$ of $C$ is defined as follows: 
\[C(n)= \{c \in C \mid \ \text{the number of  non-zero coordinates of $c$ is $n$}\}.\]  
For $c=(c_1, \ldots, c_k) \in \mathbb{F}_p^{k}$, $\operatorname{Supp}c$ denotes the set $\{i \mid c_i \neq 0 \}$.
Let $C$ and $D$ be codes of length $k$ over $\mathbb{F}_p$. Then $C$ and $D$ are called \emph{isomorphic} if there exists $f \in \langle \sigma_i, \sigma \mid 1\le i \le k, \sigma \in S_k \rangle$ such that $f(C)=D$, where  $\sigma_i$ acts on $\mathbb{F}_p^k$ as $-1$ on the $i$-th coordinate and as $1$ on the other coordinates, and each element of the symmetric group $S_k$ induces a permutation of the coordinates.

\subsection{Lattices}\label{lattices}
A \emph{lattice}  is a free abelian group of finite rank with a rational valued, positive-definite symmetric bilinear form $(\cdot ,\cdot)$.  Let $L$ be a lattice. The \emph{rank of a lattice} $L$ is the  rank as a free abelian group, which is denoted by $\operatorname{rank}L$. 
The \emph{dual lattice} $L^*$ of $L$ is defined by 
\[L^*=\{ x \in \mathbb{Q}\otimes_{\mathbb{Z}} L\mid (x,y) \in \mathbb{Z}\ \text{for any}\ y \in L \}.\]
Let $\operatorname{Aut}(L)$ be the automorphism group of the lattice $L$. 
For $S \subset L$, $N_k(S)$ denotes the squared norm of the $k$-th shortest vectors of $S \setminus{\{\boldsymbol{0}\}}$. For $r \in \mathbb{Q}$, the subset $L(r)$ of $L$ is defined as follows:
\[L(r) = \{x \in L \mid (x,x)=r\}.\] 
 A lattice $L$ is called \emph{even} if $(x,x) \in 2\mathbb{Z}$ for any $x \in L$. Note that $L(2)$ forms a root system  if $L$ is even.

\subsection{Definitions of Construction A and B}\label{Def of Const A and B}
We recall the definitions of Construction A and B, and their properties \cite[Section 4]{LS}. We define a lattice $A_{n-1}$ as follows:
\[A_{n-1} = \{(x_1, x_2, \ldots , x_{n}) \in \mathbb{Z}^{n} \mid x_1+x_2+\cdots +x_{n}=0\}.\]
Let $\{e_1, e_2, \ldots, e_n \}$ be the canonical basis of $\mathbb{R}^n$ and set $\alpha_i =e_i-e_{i+1}$ for $1 \le i \le n-1$. Note that $\Delta =\{\alpha_1, \alpha_2, \ldots, \alpha_{n-1}\}$ is a base of the root system of the lattice $A_{n-1}$. As  \cite[Proof of Proposition 4.2.3]{Martinet}, we define the vectors $\varepsilon_i\ (1 \le i \le n)$ as follows:
\[\displaystyle \varepsilon_i =-\frac{1}{n}(e_1+e_2+ \cdots+e_{i-1}-(n-1)e_i+e_{i+1}+ \cdots+ e_n).\]
Note that $\{\varepsilon_1, \varepsilon_2, \ldots, \varepsilon_{n-1}\}$ is a basis of the dual lattice $A_{n-1}^*$ and
\[A_{n-1}^{*}/A_{n-1} = \langle \varepsilon_1+A_{n-1} \rangle \cong \mathbb{Z}_n.\] 
Moreover, we have
\begin{equation}\label{varepsilon:inner product} 
(\varepsilon_i, \varepsilon_i)=\frac{n-1}{n}\ \text{and}\ (\varepsilon_i, \varepsilon_j)=-\frac{1}{n}\ \text{for}\ i \ne j,
\end{equation}
where $(\cdot, \cdot)$ denotes the standard inner product of $\mathbb{R}^n$.

Let $p$ be a prime and  $C$ a self-orthogonal code of length $k$ over $\mathbb{F}_p$. Then we can identify $C \subset \mathbb{Z}_p^k$ with a subset of $(A^{*}_{p-1}/A_{p-1})^{k} = \langle \varepsilon_1+A_{p-1} \rangle^k \cong \mathbb{Z}_p^k$. Let $\pi : (A^{*}_{p-1})^{k} \rightarrow (A^{*}_{p-1}/A_{p-1})^{k}$ be the canonical map.
Then $L_A(C)  = \pi^{-1}(C)$ is called the lattice constructed by \emph{Construction A} from $C$. 

 Let $\Delta_{+}$ be the set of all positive roots with respect to the base $\Delta$. Set $\rho_{\Delta} = (1/2)\sum_{\alpha \in \Delta_{+}} \alpha$ and $\chi_{\Delta} =(1/p)(\rho_{\Delta}, \ldots, \rho_{\Delta}) \in \bigoplus_{i=1}^{k} \mathbb{Q}^{p}$. Then $L_B(C)$ called the lattice constructed by \emph{Construction B} from $C$ is defined as follows:
\[L_B(C) = \{x \in L_A(C) \mid (x,\chi_{\Delta}) \in \mathbb{Z}\}.\]

In this subsection, we now assume that $p$ is an odd prime.
\begin{proposition}{\rm (\cite[Proposition 4.12]{LS})}
The lattice $L_A(C)$ is even if and only if $C$ is self-orthogonal.
\end{proposition}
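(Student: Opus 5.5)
Our plan is to compute the norm of an arbitrary element of $L_A(C)$ modulo $2\mathbb{Z}$ and compare it with the quadratic form of the underlying codeword. Any $x \in (A_{p-1}^*)^k$ can be written as $x = (a_1\varepsilon_1 + y_1, \ldots, a_k\varepsilon_1 + y_k)$ with $y_i \in A_{p-1}$ and $a_i \in \{0,1,\ldots,p-1\}$. Its image under $\pi$ is $(a_1,\ldots,a_k) \in \mathbb{Z}_p^k$, under the identification of $C$ with a subset of $\langle \varepsilon_1 + A_{p-1}\rangle^k$ described above. Since $A_{p-1}$ is an even lattice and $(\varepsilon_1, A_{p-1}) \subset \mathbb{Z}$, we get
\[
(x,x) \equiv \sum_{i=1}^k a_i^2(\varepsilon_1,\varepsilon_1) + 2\sum_{i=1}^k a_i(\varepsilon_1,y_i) \equiv \frac{p-1}{p}\sum_{i=1}^k a_i^2 \pmod{2\mathbb{Z}},
\]
where the last step uses \eqref{varepsilon:inner product}. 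The key quantity is therefore $q(a) = \frac{p-1}{p}\sum_i a_i^2 \bmod 2$.

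Next we reduce this quantity to a condition on $\sum_i a_i^2$. Because $p$ is odd, $p-1$ is even, so $q(a) \in 2\mathbb{Z}$ if and only if $\frac{p-1}{2}\cdot\frac{\sum a_i^2}{p} \in \mathbb{Z}$. Since $\gcd((p-1)/2,\,p)=1$, this holds if and only if $p \mid \sum_i a_i^2$, that is, $(a,a)=0$ in $\mathbb{F}_p$. The choice of integer representative $a_i$ does not matter: changing $a_i$ by $p$ changes $a_i\varepsilon_1$ by $p\varepsilon_1 \in A_{p-1}$, and the computation above already absorbed all of $A_{p-1}$. Hence $L_A(C)$ is even if and only if $(c,c)=0$ for every $c \in C$.

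Finally we pass from this diagonal condition to self-orthogonality. If $C$ is self-orthogonal, then $(c,c)=0$ for all $c$, so $L_A(C)$ is even. Conversely, if $(c,c)=0$ for all $c \in C$, polarization gives
\[
2(c,d) = (c+d,c+d) - (c,c) - (d,d) = 0,
\]
and since $2$ is invertible in $\mathbb{F}_p$, we get $(c,d)=0$ for all $c,d \in C$. This is the only place where oddness of $p$ is essential beyond the parity computation. It is also the step to double-check, along with the claim that $(\varepsilon_1, A_{p-1}) \subset \mathbb{Z}$, which holds because $\varepsilon_1 \in A_{p-1}^*$. No step looks like a real obstacle; the argument is a direct computation.
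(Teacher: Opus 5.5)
Your proof is correct. The reduction $(x,x)\equiv \frac{p-1}{p}\sum_i a_i^2 \pmod{2\mathbb{Z}}$, the equivalence $\frac{p-1}{p}S\in 2\mathbb{Z}\iff p\mid S$ (which uses that $p$ is odd and that $\gcd(\frac{p-1}{2},p)=1$), and the polarization step using $2\in\mathbb{F}_p^{\times}$ are all sound.

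The paper itself gives no proof here; it imports the result from Lam--Shimakura \cite[Proposition 4.12]{LS}. Your discriminant-form computation is the natural argument behind that citation.

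One tacit point: for the ``only if'' direction you need $L_A(C)=\pi^{-1}(C)$ for an arbitrary code $C$, whereas the paper introduces it only for self-orthogonal $C$. This is harmless, since the definition makes sense verbatim for any code.
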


By the above proposition, $L_A(C)(2)$ forms a root system if $C$ is self-orthogonal.
\begin{lemma}\label{Structures of the dual lattices of Construction B}{\rm (\cite[Lemma 4.14]{LS})}
$L_B(C)^{*}=L_A(C^{\perp})+\mathbb{Z}\chi_{\Delta}.$
\end{lemma}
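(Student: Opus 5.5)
We prove the two inclusions $L_A(C^{\perp})+\mathbb{Z}\chi_{\Delta}\subset L_B(C)^{*}$ and $L_B(C)^{*}\subset L_A(C^{\perp})+\mathbb{Z}\chi_{\Delta}$ separately. The tools are the standard duality $L_A(C)^{*}=L_A(C^{\perp})$ and a computation of $\chi_{\Delta}$ modulo $(A_{p-1})^k$.

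\textbf{Step 1: duality for Construction A.} We first check $L_A(C)^{*}=L_A(C^{\perp})$. Since $(A_{p-1})^k\subset L_A(C)\subset (A^*_{p-1})^k$, we get $L_A(C)^*\subset (A_{p-1}^*)^k$. It therefore suffices to compute pairings modulo $\mathbb{Z}$ on $(A^*_{p-1}/A_{p-1})^k$. By \eqref{varepsilon:inner product}, $(\varepsilon_1,\varepsilon_1)=(p-1)/p\equiv -1/p \pmod{\mathbb{Z}}$. Hence for $x,y$ with images $a,b\in\mathbb{F}_p^k$ we have $(x,y)\equiv -\frac1p\sum a_ib_i \pmod{\mathbb{Z}}$. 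This pairing is nondegenerate, so $L_A(C)^*=\pi^{-1}(C^\perp)$.

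\textbf{Step 2: locating $\chi_{\Delta}$.} For $A_{p-1}$ we have $\rho_\Delta=\frac12\sum_{i}(p+1-2i)e_i$, which has integer coordinates because $p$ is odd. It sums to zero, so $\rho_\Delta\in A_{p-1}$. Hence $\chi_{\Delta}=\frac1p(\rho_\Delta,\ldots,\rho_\Delta)\in \frac1p (A_{p-1})^k$, and in particular $\chi_\Delta\in\mathbb{Q}\otimes L_A(C)$. Moreover $p\chi_\Delta\in (A_{p-1})^k\subset L_A(C)$.

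Next we check that $\chi_\Delta\notin L_A(C)^*$ unless $L_B(C)=L_A(C)$. The pairing of $\varepsilon_j$ with $\rho_\Delta$ is $\rho$ evaluated at a fundamental coweight shifted, so $(\varepsilon_1,\rho_\Delta)=\frac{p-1}{2}$ up to integers. Then $(\varepsilon_1,\rho_\Delta/p)\equiv \frac{p-1}{2p}\not\equiv 0 \pmod{\mathbb{Z}}$. Consequently the map $\phi:L_A(C)\to \frac1p\mathbb{Z}/\mathbb{Z}$, $x\mapsto (x,\chi_\Delta)$, is a homomorphism whose kernel is $L_B(C)$. The map $\phi$ is surjective when $C\neq 0$, and also when $C=0$, since $A_{p-1}$ itself pairs nontrivially with $\rho_\Delta/p$: for example $(\alpha_1,\rho_\Delta)=1$. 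So in all cases $[L_A(C):L_B(C)]=p$.

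\textbf{Step 3: the two inclusions.} The inclusion $\supset$ is immediate. Elements of $L_A(C^\perp)=L_A(C)^*$ pair integrally with $L_B(C)\subset L_A(C)$, and $\chi_\Delta$ pairs integrally with $L_B(C)$ by the definition of $L_B(C)$.

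For $\subset$, index counting suffices. From Step 2,
\[
[L_B(C)^*:L_A(C)^*]=[L_A(C):L_B(C)]=p .
\]
Also $\chi_\Delta\notin L_A(C)^*$, because $\phi\neq 0$. Finally $p\chi_\Delta\in (A_{p-1})^k\subset L_A(C^\perp)$. Therefore $L_A(C^\perp)+\mathbb{Z}\chi_\Delta$ contains $L_A(C^\perp)$ with index exactly $p$ and sits inside $L_B(C)^*$. Comparing indices gives equality.

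\textbf{Main obstacle.} The one nontrivial point is Step 2: verifying that $(x,\chi_\Delta)$ is not always an integer on $L_A(C)$, so that the index is exactly $p$ rather than $1$. We handle this by the explicit pairing with a root $\alpha_1$ in one coordinate block. This gives $(\alpha_1,\chi_\Delta)=1/p$, using $(\alpha_i,\rho_\Delta)=1$ for simple roots.
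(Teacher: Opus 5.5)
Your proof is correct. The paper gives no argument for this lemma: it imports the statement verbatim from Lam and Shimakura (their Lemma 4.14). So your write-up is a self-contained substitute, not a variant of a proof in the text.

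Your route is the natural one:
\begin{enumerate}
\item the discriminant-form duality $L_A(C)^*=L_A(C^{\perp})$;
\item the index $[L_A(C):L_B(C)]=p$, coming from $(\alpha_1^1,\chi_\Delta)=1/p$;
\item the comparison of $[L_B(C)^*:L_A(C^{\perp})]=p$ with the facts $\chi_\Delta\notin L_A(C^{\perp})$ and $p\chi_\Delta\in A_{p-1}^k\subset L_A(C^{\perp})$.
\end{enumerate}
The argument uses only that $p$ is odd. That is what makes $\rho_\Delta\in A_{p-1}$, and hence $(x,\chi_\Delta)\in\frac1p\mathbb{Z}$ for $x\in(A_{p-1}^*)^k$. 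It never uses self-orthogonality of $C$, so it proves the identity for every code over $\mathbb{F}_p$.

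One cleanup is needed in Step~2. The computation $(\varepsilon_1,\rho_\Delta/p)\equiv\frac{p-1}{2p}$ and the remark that $\phi$ is surjective ``when $C\neq 0$'' say nothing about $L_A(C)$ as written, because $\varepsilon_1^i$ generally does not lie in $L_A(C)$. Drop both. The vector $\alpha_1^1\in L_A(C)$ with $(\alpha_1^1,\chi_\Delta)=1/p$ already gives, for every $C$, both the surjectivity of $\phi$ and $\chi_\Delta\notin L_A(C)^*$.
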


Set $\lambda_0=0$, $\lambda_1 =\varepsilon_1$, and $\lambda_j=j\varepsilon_1-\sum_{i=1}^{j-1}(j-i)\alpha_i$ for $2 \le j \le p-1$.
For $c=(c_1, \ldots, c_{k}) \in C$, we define $\lambda_{c}$ as follows:
\begin{equation}\label{def of lambda}
\lambda_{c}=(\lambda_{c_1},\ldots, \lambda_{c_k}).
\end{equation} 
\begin{lemma}{\rm (\cite[Lemmas 4.5 and 4.15]{LS})}\label{generating set: ConstA and B} 
Let $X$ be a generating set of $C$. Then the following hold:
\begin{enumerate}
\item The lattice $L_A(C)$ is generated by the sets  $\{\lambda_c \mid c \in X\}$ and $A_{p-1}^k$. 
\item The lattice $L_B(C)$ is generated by the sets $\{\lambda_c \mid c \in X\}$ and $L_B(\{\boldsymbol{0}\})$.  
\end{enumerate}
\end{lemma}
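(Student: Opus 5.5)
The plan is to prove both parts directly from the definitions. The key point is that each $\lambda_j$ represents the class $j(\varepsilon_1+A_{p-1})$. We can then reduce any lattice vector modulo the span of the $\lambda_c$ to a vector of $A_{p-1}^k$.

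\textbf{Step 1 ($\pi(\lambda_c)=c$).} Since each $\alpha_i\in A_{p-1}$, we have $\lambda_j\equiv j\varepsilon_1 \pmod{A_{p-1}}$ for $0\le j\le p-1$. Hence $\pi(\lambda_c)=c$ under the identification of $C$ with a subgroup of $\langle\varepsilon_1+A_{p-1}\rangle^k$. In particular $\lambda_c\in L_A(C)$, and trivially $A_{p-1}^k=\pi^{-1}(\boldsymbol{0})\subset L_A(C)$.

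\textbf{Step 2 (part (1)).} Let $x\in L_A(C)$ and write $\pi(x)=\sum_i a_i c^{(i)}$ with $c^{(i)}\in X$ and $a_i\in\mathbb{Z}$. Then $\pi\bigl(x-\sum_i a_i\lambda_{c^{(i)}}\bigr)=\boldsymbol{0}$, so this difference lies in $A_{p-1}^k$. The map $c\mapsto\lambda_c$ is not additive, but this does not matter, since only classes modulo $A_{p-1}^k$ are used. This proves (1).

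\textbf{Step 3 ($\lambda_c\in L_B(C)$).} For part (2), the one real computation is to check that $\lambda_c\in L_B(C)$ for every $c\in C$, i.e.\ that $(\lambda_c,\chi_\Delta)\in\mathbb{Z}$. With $(\rho_\Delta,\alpha_i)=1$, one has $\rho_\Delta=\sum_{i=1}^p\bigl(\tfrac{p+1}{2}-i\bigr)e_i$. Since $\rho_\Delta$ has coordinate sum zero, $(\varepsilon_1,\rho_\Delta)=\tfrac{p-1}{2}$. Therefore
\[
(\lambda_j,\rho_\Delta)=\frac{j(p-1)}{2}-\sum_{i=1}^{j-1}(j-i)=\frac{j(p-1)}{2}-\frac{j(j-1)}{2}=\frac{j(p-j)}{2},
\]
which is an integer because $p$ is odd. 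Summing over coordinates gives
\[
(\lambda_c,\chi_\Delta)=\frac1p\sum_{i=1}^k\frac{c_i(p-c_i)}{2}.
\]
Let $N=\sum_i c_i(p-c_i)/2$, an integer. Then $2N\equiv-\sum_i c_i^2=-(c,c)\equiv0\pmod p$ by self-orthogonality of $C$. Since $p$ is odd, this gives $p\mid N$, so $(\lambda_c,\chi_\Delta)\in\mathbb{Z}$.

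\textbf{Step 4 (part (2)).} Clearly $L_B(\{\boldsymbol{0}\})\subset L_B(C)$, and $L_B(C)$ is a lattice, so it contains the group generated by the given sets. Conversely, let $x\in L_B(C)$. Write $x=\sum_i a_i\lambda_{c^{(i)}}+y$ with $y\in A_{p-1}^k$, as in Step 2. The pairing with $\chi_\Delta$ is linear and integral on each $\lambda_{c^{(i)}}$ by Step 3, so $(y,\chi_\Delta)\in\mathbb{Z}$. Hence $y\in L_B(\{\boldsymbol{0}\})$, which proves (2).

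The main obstacle is Step 3: the formula for $\rho_\Delta$ must be computed correctly, and both the parity (which needs $p$ odd) and the divisibility by $p$ (which needs $(c,c)=0$) must be handled. Everything else is formal.
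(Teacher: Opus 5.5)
Your proof is correct and complete. The paper gives no argument of its own for this lemma; it simply quotes \cite[Lemmas 4.5 and 4.15]{LS}. Your proposal therefore replaces the citation with a self-contained check rather than following a different route.

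Part (1) is the formal observation $\pi(\lambda_c)=c$ together with $\ker\pi=A_{p-1}^k$. The only substantive content of part (2) is your computation $(\lambda_j,\rho_\Delta)=j(p-j)/2$, and you carry it out correctly. A quick cross-check: $\lambda_j=\varepsilon_1+\cdots+\varepsilon_j$ and $(\varepsilon_i,\rho_\Delta)=\frac{p+1}{2}-i$, which sum to the same value.

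You rightly point out that this step uses both standing hypotheses of that subsection, $p$ odd and $C\subset C^\perp$. Without self-orthogonality, $\lambda_c$ need not lie in $L_B(C)$. For example, with $p=3$ and $C=\langle(1)\rangle$ one gets $(\varepsilon_1,\chi_\Delta)=1/3$, so part (2) as stated would fail.
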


\subsection{Properties of the dual lattice $A^{*}_{n-1}$ and related topics}
In this subsection, we discuss some properties of the lattice $A_{n-1}^*$ and related topics.
For a lattice $L$ and a subset $S$ of $L$, let $N_k(S)$ be as in  Subsection \ref{lattices}.
\begin{lemma}\label{minimal norm}{\rm (\cite[Proposition 4.2.3]{Martinet})}
$N_1(A_{n-1}^{*})=(n-1)/n.$
\end{lemma}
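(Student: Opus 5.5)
Since $\{\varepsilon_1,\dots,\varepsilon_{n-1}\}$ is a basis of $A_{n-1}^*$ and $\varepsilon_n=-(\varepsilon_1+\cdots+\varepsilon_{n-1})$, I will work with the description $A_{n-1}^*=A_{n-1}+\bigcup_{j=0}^{n-1}(A_{n-1}+j\varepsilon_1)$, i.e. $A_{n-1}^*$ is the union of the $n$ cosets of $A_{n-1}$, and minimize the norm coset by coset. Concretely, $A_{n-1}^*$ is the orthogonal projection of $\mathbb{Z}^n$ onto the hyperplane $H=\{x\mid \sum x_i=0\}$: for $y\in\mathbb{Z}^n$ its projection is $y-\frac{s}{n}(1,\dots,1)$ with $s=\sum y_i$, and $\varepsilon_i$ is the projection of $e_i$. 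The coset of the projected vector modulo $A_{n-1}$ is determined by $s \bmod n$.

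The key step is then a direct norm computation. For $y\in\mathbb{Z}^n$ with $\sum y_i=s$, the projected vector $x$ has $(x,x)=\sum y_i^2-s^2/n$. Fix the residue $j=s\bmod n$ with $1\le j\le n-1$ (the coset $j=0$ is $A_{n-1}$ itself, whose nonzero vectors have norm at least $2\ge (n-1)/n$). Shifting $y$ by multiples of $(1,\dots,1)$ does not change $x$, so I may assume $s=j$. Among integer vectors with coordinate sum $j$, the quantity $\sum y_i^2$ is minimized by taking $j$ coordinates equal to $1$ and the rest $0$. This holds because $y_i^2\ge y_i$ for integers, with equality only when $y_i\in\{0,1\}$, so $\sum y_i^2\ge \sum y_i=j$. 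Hence the minimal norm in coset $j$ is $j-j^2/n=j(n-j)/n$.

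It remains to minimize $j(n-j)/n$ over $1\le j\le n-1$. This concave function of $j$ attains its minimum at the endpoints $j=1$ and $j=n-1$, where it equals $(n-1)/n$. That value is realized by $\varepsilon_1$, consistent with \eqref{varepsilon:inner product}. Since $(n-1)/n<2$, this gives $N_1(A_{n-1}^*)=(n-1)/n$.

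The only mildly delicate point is justifying that every vector of $A_{n-1}^*$ arises as such a projection. I would handle this by noting that the projections of $e_1,\dots,e_n$ are exactly $\varepsilon_1,\dots,\varepsilon_n$, which generate $A_{n-1}^*$ by the statement preceding \eqref{varepsilon:inner product}, together with the fact that projection is additive. With that in place, the remaining argument is routine.
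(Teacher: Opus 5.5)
Your proof is correct and complete. The ingredients are:
\begin{itemize}
\item the description of $A_{n-1}^*$ as the orthogonal projection of $\mathbb{Z}^n$ onto the hyperplane $\sum x_i=0$;
\item the norm formula $\sum y_i^2-s^2/n$;
\item normalizing the coordinate sum to $j\in\{0,\dots,n-1\}$ by adding multiples of $(1,\dots,1)$;
\item the inequality $y_i^2\ge y_i$.
\end{itemize}
Together these give the coset minima $j(n-j)/n$, and hence $N_1(A_{n-1}^*)=(n-1)/n$.

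The paper gives no proof of its own for this lemma; it only cites Martinet. Martinet's proof uses the same vectors $\varepsilon_i$, which are exactly the projections of the $e_i$, as you note. So your argument is essentially the standard one.

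One connection is worth seeing. For $x=\sum_{i<n}x_i\varepsilon_i$, take $y=(x_1,\dots,x_{n-1},0)$. Lagrange's identity turns your formula into $n(x,x)=n\sum y_i^2-s^2=\sum_{i<j\le n}(y_i-y_j)^2$, which is precisely the paper's identity (\ref{innerproduct}), namely $S_1+S_2$. In that form the lower bound follows at once from Lemma~\ref{lem}: $x\neq 0$ and $y_n=0$ force the $y_i$ not all to be equal.

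Your coset-by-coset version gives more. It yields the exact minimum $j(n-j)/n$ on $j\varepsilon_1+A_{n-1}$, attained exactly by the sums $\sum_{i\in I}\varepsilon_i$ with $\#I=j$. This is the information underlying the paper's later descriptions of $A_2^*(2/3)$, $A_4^*(4/5)$ and $A_4^*(6/5)$.
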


By using induction, we have the following lemma.

\begin{lemma}\label{lem}
Let $n \ge 2$. If $x_i \in \mathbb{Z}$ for $1 \le i \le n$ and there exist $i$,$j$ such that $x_i \ne x_j$, then we have
\[\sum_{1\le i < j \le n} (x_i-x_j)^2 \ge n-1.\]

\end{lemma}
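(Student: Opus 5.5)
Since the paper notes that the statement follows by induction, the plan is to induct on $n$, with $n=2$ as the base case. For $n=2$ the hypothesis says $x_1\ne x_2$. Both are integers, so $(x_1-x_2)^2\ge 1=n-1$.

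For the inductive step, assume the claim for $n-1\ge 2$ and take integers $x_1,\dots,x_n$ that are not all equal. After reordering, which does not change the symmetric sum, we may assume $x_n$ is chosen as follows. If some value occurs exactly once among the $x_i$, let $x_n$ be that value. Otherwise choose $x_n$ so that $x_1,\dots,x_{n-1}$ are still not all equal; this is possible because at least two distinct values occur and each occurs at least twice. Now split
\[
\sum_{1\le i<j\le n}(x_i-x_j)^2=\sum_{1\le i<j\le n-1}(x_i-x_j)^2+\sum_{i=1}^{n-1}(x_i-x_n)^2 .
\]

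There are two cases.

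\emph{Case 1: $x_1,\dots,x_{n-1}$ are not all equal.} The induction hypothesis bounds the first sum below by $n-2$. Since not all $x_i$ are equal, at least one $x_i$ with $i<n$ differs from $x_n$. That index contributes at least $1$ to the second sum, so the total is at least $n-1$.

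\emph{Case 2: $x_1,\dots,x_{n-1}$ are all equal to a common value $a$.} Then $x_n\ne a$, so each of the $n-1$ terms of the second sum is a nonzero integer square. The second sum alone is therefore at least $n-1$.

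The only delicate point is making sure the case split covers every configuration, and it does: whatever $x_n$ is, either the first $n-1$ entries are not all equal, or they are all equal and then differ from $x_n$. So the careful choice of which index to remove is not actually needed, and the two cases close the induction.

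Alternatively, the induction can be bypassed entirely. Let $m$ be the number of indices $i$ with $x_i$ equal to some fixed value, where $1\le m\le n-1$. Every pair consisting of one such index and one other index contributes at least $1$. There are $m(n-m)$ such pairs, and $m(n-m)\ge n-1$ for $1\le m\le n-1$. Either argument proves the lemma.
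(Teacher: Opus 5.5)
Your induction is correct and matches the paper, which justifies this lemma only by remarking that it follows by induction; as you note, the preliminary reordering is unnecessary because the two cases already cover every configuration. Your non-inductive alternative is also valid, since $m(n-m)-(n-1)=(m-1)(n-m-1)\ge 0$ for $1\le m\le n-1$.
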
 

By using the above lemma, we have a lower bound of $N_2(A_{n-1}^{*})$ for $n \ge 7$.
\begin{lemma}\label{lower bound}
$N_2(A_{n-1}^{*}) > (n+1)/n $ for $n \ge 7$
\end{lemma}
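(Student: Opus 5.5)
The plan is to use the standard description of $A_{n-1}^{*}$ as the orthogonal projection of $\mathbb{Z}^n$ onto the hyperplane $\sum x_i=0$. This turns every norm in $A_{n-1}^{*}$ into an integer quadratic expression divided by $n$. Lemma \ref{lem} already handles the minimum, and a short case analysis will handle the next value.

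\emph{Step 1 (norm formula).} Every $x\in A_{n-1}^{*}$ can be written as $x=z-\frac{s}{n}(1,\ldots,1)$ with $z=(z_1,\ldots,z_n)\in\mathbb{Z}^n$ and $s=\sum_i z_i$. Indeed, $\varepsilon_i$ is exactly this projection of $e_i$, and the $\varepsilon_i$ generate $A_{n-1}^{*}$. Expanding gives
\[(x,x)=\sum_i z_i^2-\frac{s^2}{n}=\frac1n\sum_{1\le i<j\le n}(z_i-z_j)^2 .\]
Moreover $x=0$ if and only if all $z_i$ are equal. So the norms of nonzero vectors are exactly the values $S/n$, where $S=\sum_{i<j}(z_i-z_j)^2$ and the $z_i$ are not all equal. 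By Lemma \ref{lem}, $S\ge n-1$, which is attained by $z=e_1$ and recovers Lemma \ref{minimal norm}. It therefore suffices to show that every such $z$ with $S\neq n-1$ has $S> n+1$. In fact we will show $S\ge 2n-4$, and $2n-4>n+1$ holds for $n\ge 7$ (even for $n\ge 6$).

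\emph{Step 2 (exactly two distinct values).} Suppose $z$ takes values $a<b$ with multiplicities $m$ and $n-m$, where $1\le m\le n-1$. Then $S=m(n-m)(b-a)^2$.
\begin{enumerate}
\item If $b-a\ge 2$, then $S\ge 4(n-1)$.
\item If $b-a=1$ and $m\in\{1,n-1\}$, then $S=n-1$, which is the minimal case.
\item If $b-a=1$ and $2\le m\le n-2$, then $S=m(n-m)\ge 2(n-2)$.
\end{enumerate}

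\emph{Step 3 (at least three distinct values).} Choose indices $u,w$ with $z_u=\min_i z_i$ and $z_w=\max_i z_i$. Then $z_w-z_u\ge 2$, so the pair $(u,w)$ contributes at least $4$ to $S$. For each of the other $n-2$ indices $k$, we have
\[(z_k-z_u)^2+(z_k-z_w)^2\ge \tfrac12 (z_w-z_u)^2\ge 2.\]
Summing these contributions, which come from disjoint sets of pairs, gives $S\ge 4+2(n-2)=2n$.

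Combining Steps 2 and 3, the second smallest value of $S$ is at least $\min\{4(n-1),\,2n-4,\,2n\}=2n-4>n+1$. Hence $N_2(A_{n-1}^{*})\ge (2n-4)/n>(n+1)/n$.

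The only point requiring care is Step 3, since one must make sure the pair contributions are not double counted. This is why the bound uses only the pair $(u,w)$ together with pairs of the form $(k,u)$ and $(k,w)$, which are pairwise distinct. Everything else is routine arithmetic.
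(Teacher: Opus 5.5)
Your proof is correct. It differs from the paper's mainly in how the case analysis is organized.

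The paper works in the basis $\varepsilon_1,\ldots,\varepsilon_{n-1}$, which is your formula with $z_n=0$, and writes $n(x,x)=S_1+S_2$.
\begin{itemize}
\item It treats the case $S_2=0$ (all coefficients equal) separately.
\item Otherwise it applies Lemma~\ref{lem} to the $n-1$ coefficients to get $S_2\ge n-2$. A vector of norm at most $(n+1)/n$ then has $S_1\le 3$, so at most three coefficients are $\pm1$ and the rest are zero.
\item It finishes by computing directly the norms $2$, $(2n-4)/n$, $(3n-9)/n$, $(3n-1)/n$ of $\varepsilon_i\pm\varepsilon_j$ and $\varepsilon_i+\varepsilon_j\pm\varepsilon_l$.
\end{itemize}

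You instead keep all $n$ coordinates symmetric and split by the number of distinct values of $z$. For two values you use the exact formula $m(n-m)(b-a)^2$; for three or more you use the disjoint-pairs bound $S\ge 2n$. This buys three things:
\begin{itemize}
\item The argument is self-contained. Steps 2 and 3 reprove Lemma~\ref{lem} in passing, so citing it in Step 1 is not actually needed.
\item It avoids the basis-dependent special case $S_2=0$.
\item It shows that $(2n-4)/n$ is the exact second minimum, attained by $\varepsilon_i+\varepsilon_j$.
\end{itemize}
The paper's route, taking Lemma~\ref{lem} as given, reduces everything to a finite check of a few explicit vectors. Both arguments in fact work for $n\ge 6$.
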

\begin{proof}
Let $\{\varepsilon_1, \varepsilon_2, \ldots, \varepsilon_{n-1}\}$ be the basis of $A_{n-1}^{*}$ defined in Subsection \ref {Def of Const A and B}.
For $x=x_1\varepsilon_1+\cdots+x_{n-1}\varepsilon_{n-1} \in A_{n-1}^*$, we have
\begin{equation}\label{innerproduct}
n(x,x)= \sum_{1\le i \le n-1}x_i^{2}+\sum_{1 \le i < j \le n-1}(x_i-x_j)^2.
\end{equation}
Let $S_1$ and $S_2$ denote the first and second summations in (\ref{innerproduct}), respectively.  

We suppose that $(x,x)>N_1(A_{n-1}^*)=(n-1)/n$ and $n \ge 7$. If $S_2$ is equal to zero, then we have $x_i=x_j$ for any $i$ and $j$. By assumption, since $|x_i| \ge 2$ for any $i$, we obtain $n(x,x) \ge 4(n-1)$, that is, $(x,x) \ge (4n-4)/n  > (n+1)/n$. Hence, we may assume that $S_2$ is not equal to zero.  By Lemma~\ref{lem}, we have
$ S_1 \ge 1$ and $S_2 \ge n-2$. Hence, we consider the cases where the number of $i$ such that $x_i^2=1$ is $k$ and the other coordinates are all zero, where $k \in \{2,3\}$.

\noindent (i) The case $k=2$

 We have $(\varepsilon_i-\varepsilon_j, \varepsilon_i-\varepsilon_j)=2$ and $(\varepsilon_i+\varepsilon_j, \varepsilon_i+\varepsilon_j)=(2n-4)/n$ for $i \ne j$. Moreover, we obtain $(2n-4)/n > (n+1)/n$ for  $n\ge 7$.

\noindent (ii) The case $k=3$

 If $i \ne j$, $i\ne l$, and $j \ne l$, then we have $(\varepsilon_i+\varepsilon_j+\varepsilon_l, \varepsilon_i+\varepsilon_j+\varepsilon_l) = (3n-9)/n$ and $(\varepsilon_i+\varepsilon_j-\varepsilon_l, \varepsilon_i+\varepsilon_j-\varepsilon_l)=(3n-1)/n.$ Moreover, we obtain  $(3n-9)/n,  (3n-1)/n > (n+1)/n$  for $n\ge 7$.
 
By the above discussion, we have the desired result.
\end{proof}
Let $p$ be an odd prime and $\alpha_1, \ldots, \alpha_{p-1}$, $\varepsilon_1$, $\rho_{\Delta}$ as in Subsection \ref{Def of Const A and B}.
\begin{lemma}\label{lower bound 2}
Let $l$ be an integer such that $1\le l \le p-1$. Then we have $N_1(\frac{l}{p}\rho_{\Delta}+A_{p-1}^*) \ge \frac{(p-1)(p+1)}{12p}$.
\end{lemma}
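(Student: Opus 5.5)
The plan is to work in explicit coordinates of $\mathbb{R}^p$ and reduce the bound to an elementary inequality about $p$ distinct real numbers.

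First I write down the two ingredients in coordinates. Since $\Delta_+=\{e_i-e_j \mid i<j\}$, we have
\[
\rho_{\Delta}=\Bigl(\tfrac{p-1}{2},\tfrac{p-3}{2},\ldots,-\tfrac{p-1}{2}\Bigr),
\]
that is, $(\rho_\Delta)_i=\frac{p+1-2i}{2}$. Next I use the standard description
\[
A_{p-1}^*=\{y\in\mathbb{R}^p \mid \textstyle\sum_i y_i=0,\ y_i-y_j\in\mathbb{Z}\ \text{for all } i,j\}.
\]
This is consistent with the basis $\varepsilon_i$ of Subsection \ref{Def of Const A and B}, since $\varepsilon_i=e_i-\frac1p(e_1+\cdots+e_p)$.

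Now take an arbitrary $x=\frac{l}{p}\rho_\Delta+y$ with $y\in A_{p-1}^*$. Then $\sum_i x_i=0$, and for $i\neq j$ we have
\[
x_i-x_j\equiv \frac{l}{p}(j-i)\pmod{\mathbb{Z}}.
\]
Both $l$ and $j-i$ are nonzero and of absolute value less than $p$, so $p\nmid l(j-i)$. Hence the $x_i$ are pairwise distinct. Moreover, all the $x_i$ lie in a single coset $t+\frac1p\mathbb{Z}$, because $\frac{l}{p}(\rho_\Delta)_i\in\frac{1}{2p}\mathbb{Z}$ differ pairwise by elements of $\frac1p\mathbb{Z}$, and the $y_i$ differ by integers. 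Consequently, after sorting the values as $z_1<z_2<\cdots<z_p$, we get $z_j-z_i\ge \frac{j-i}{p}$ for $i<j$.

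The key step is the identity, valid because $\sum_i x_i=0$:
\[
p\,(x,x)=p\sum_i z_i^2=\sum_{1\le i<j\le p}(z_j-z_i)^2 .
\]
Combining this with the spacing bound gives
\[
p\,(x,x)\ \ge\ \frac{1}{p^2}\sum_{1\le i<j\le p}(j-i)^2=\frac{1}{p^2}\cdot\frac{p^2(p^2-1)}{12}.
\]
The last equality is the routine computation $\sum_{d=1}^{p-1}(p-d)d^2=p^2(p^2-1)/12$. Dividing by $p$ yields $(x,x)\ge \frac{(p-1)(p+1)}{12p}$ for every element of the coset, which is the claim.

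I do not expect a serious obstacle. The only points needing care are the following.

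1. Verifying the description of $A_{p-1}^*$ and that it matches the paper's conventions.
2. Checking that the spacing argument uses only distinctness within one coset of $\frac1p\mathbb{Z}$, and not the specific residues.
3. The closed-form sum.

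The bound is attained by $\frac{l}{p}\rho_\Delta$ itself when $l=1$, which serves as a sanity check.
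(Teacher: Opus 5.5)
Your argument is correct and reaches the same sharp constant, but by a different mechanism from the paper.

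The paper writes $x=\sum_i k_i\alpha_i+k\varepsilon_1+\frac{l}{p}\rho_{\Delta}$ and observes that the $p$ coordinates of $x$ lie in $\frac1p\mathbb{Z}$. Their residues modulo $\mathbb{Z}$ run through every class $\frac{r}{p}+\mathbb{Z}$ exactly once. It then bounds each coordinate separately by its distance to the nearest integer, giving $(x,x)\ge \frac{2}{p^2}\sum_{i=1}^{(p-1)/2}i^2$. The condition $\sum_i x_i=0$ is never used.

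You instead use $\sum_i x_i=0$ through the identity $p(x,x)=\sum_{i<j}(x_i-x_j)^2$, after which only differences matter. So you need neither the decomposition via $k$ and the $k_i$ nor the exact residue classes, only pairwise distinctness inside one coset of $\frac1p\mathbb{Z}$. The cost is the spacing argument and the evaluation $\sum_{d=1}^{p-1}(p-d)d^2=p^2(p^2-1)/12$.

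The paper's estimate is more local (one coordinate at a time), but it depends on knowing that the common coset is $\frac1p\mathbb{Z}$ itself, with the class of $0$ present. Yours is translation-invariant and would apply verbatim to any zero-sum vector whose entries are pairwise distinct and lie in a single coset of $\frac1p\mathbb{Z}$.

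Your sanity check is also right: $(\rho_{\Delta},\rho_{\Delta})=p(p^2-1)/12$, so $\frac1p\rho_{\Delta}$ attains the bound.
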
 
\begin{proof}
Let $k$ be an integer such that $0 \le k \le p-1$. It suffices to prove that $N_1(k\varepsilon_1+\frac{l}{p}\rho_{\Delta}+A_{p-1})\ge  \frac{(p-1)(p+1)}{12p}$. Let $k_1, \ldots, k_{p-1} \in \mathbb{Z}$ and set $x=k_1\alpha_1+\cdots+k_{p-1}\alpha_{p-1}+k\varepsilon_1+\frac{l}{p}\rho_{\Delta}$.  Then we have 

\[(x,x)=(k_1+\frac{1}{p}(k(p-1)+\frac{p-1}{2}l))^2+\sum_{1\le i \le p-1}(k_{i+1}-k_{i}+\frac{1}{p}(-k+\frac{p-2i-1}{2}l))^2,\]
where $k_p=0$.
Note that 
\[\{(-k+\frac{p-2i-1}{2}l)\bmod p \mid 0 \le i \le p-1\}=\{\frac{p-2i-1}{2} \bmod p \mid  0 \le i \le p-1 \}.\]
Hence, we have 
\[(x,x) \ge \frac{2}{p^2}\sum_{1 \le i \le \frac{p-1}{2}}i^2= \frac{(p-1)(p+1)}{12p}.\]
\end{proof}

\section{Classification of isomorphism classes of lattices from Construction A }\label{Iso prob for Const A}
Let $p$ be a prime,  $C$ and $D$ self-orthogonal codes of length $k$ over $\mathbb{F}_p$. 
The goal in this section is to prove the following theorem:
\begin{theorem}\label{answer of isom prob for const A}
Let $p$ be an odd prime.
Then $L_A(C) \cong L_A(D)$ as lattices if and only if $C \cong D$ as codes.
\end{theorem}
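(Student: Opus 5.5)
The "if" direction is immediate. The maps $\sigma_i$ and coordinate permutations lift to lattice automorphisms of $(A_{p-1}^*)^k$: permute the $k$ blocks, and act by $-1$ on the $i$-th block. The map $-1$ preserves $A_{p-1}$ and acts as $-1$ on $A_{p-1}^*/A_{p-1}\cong\mathbb{Z}_p$. Such a lift sends $\pi^{-1}(C)$ onto $\pi^{-1}(D)$. The real work is the converse, and I will organize it around the notion of an $A_{p-1}$-frame.

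\textbf{Frames and the reduction.} Call a sublattice $F\subset L_A(C)$ an $A_{p-1}$-frame if $F\cong A_{p-1}^k$, $F$ is generated by roots of $L_A(C)$, and $F$ has full rank. The standard frame is $F_0=A_{p-1}^k$. Suppose every $A_{p-1}$-frame of $L_A(C)$ is an $\operatorname{Aut}(L_A(C))$-image of $F_0$. Let $\varphi:L_A(D)\to L_A(C)$ be an isometry. Then $\varphi(A_{p-1}^k)$ is a frame of $L_A(C)$, so after composing with an automorphism we may assume $\varphi(A_{p-1}^k)=A_{p-1}^k$.

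The decomposition of $A_{p-1}^k$ into irreducible components is unique, so $\varphi$ permutes the $k$ components. Restricted to each component, $\varphi$ is an element of $\operatorname{Aut}(A_{p-1})=W(A_{p-1})\times\{\pm1\}$. It extends to the duals, and hence induces a map on $(A_{p-1}^*/A_{p-1})^k$. The Weyl group acts trivially on the discriminant group, so the induced map is a coordinate permutation composed with sign changes $\sigma_i$. Since $L_A(X)/A_{p-1}^k= X$, this induced map sends $D$ to $C$, which gives $C\cong D$.

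\textbf{Transitivity.} This is the main step and the main obstacle. Let $F$ be a frame. It is generated by roots, so $F(2)$ is a root subsystem of $R=L_A(C)(2)$ of type $A_{p-1}^k$ and of full rank. First I will determine $R$. A root is either in $A_{p-1}^k$, or it is $\lambda_c+a$ for a codeword $c$ and $a\in A_{p-1}^k$. The norm of such a vector is at least $\mathrm{wt}(c)\cdot N_1$ of the relevant cosets. By Lemma~\ref{minimal norm} and Lemma~\ref{lower bound}, each nonzero coordinate contributes at least $(p-1)/p$, and a coordinate in a coset other than $\pm\varepsilon_1$ contributes more, as an estimate like Lemma~\ref{lower bound} shows. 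Hence norm $2$ forces $\mathrm{wt}(c)(p-1)/p\le 2$, so $\mathrm{wt}(c)\le 2$ for $p\ge5$. But self-orthogonality forces every codeword of weight at most $2$ to have sum of squares $0$ mod $p$. So extra roots occur only for small $p$:
\begin{enumerate}
\item For $p=3$, weight-$3$ codewords can give norm $2$. Here $A_2$ components glue into larger components, of types $E_6$ and $D_4$ and the like.
\item For $p=5$, $A_4^2$ glues inside $E_8$ via a weight-$2$ codeword with $1^2+2^2=5$.
\item For $p\ge7$, $R=A_{p-1}^k$ exactly, and then $F=F_0$.
\end{enumerate}
In the small cases the root system of $L_A(C)$ is a sum of components $A_{p-1}$, $E_6$, $E_8$, $D_4$, and so on. Each component $E$ of $R$ is spanned by $F(2)\cap E$. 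So transitivity reduces to a component-wise statement: any two full-rank $A_{p-1}^m$-subsystems of a given component are conjugate under $W(E)$, extended by the diagram automorphisms that glue.

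For $p=5$ this is the classical fact that all $A_4\times A_4$ subsystems of $E_8$ are $W(E_8)$-conjugate. I would verify it by counting: the number of such subsystems should equal $|W(E_8)|$ divided by the order of the stabilizer. For $p=3$ the analogous statement is that $A_2^3$ subsystems in $E_6$ and $A_2^4$ subsystems in $E_8$ form single orbits, which follows in the same way or from Borel–de Siebenthal/Dynkin classification.

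One subtlety remains: Weyl group elements of a component must extend to automorphisms of the whole lattice $L_A(C)$. They do, because the Weyl group of the root system $R\subset L$ always acts on $L$, as reflections in roots of $L$. This completes the transitivity argument, and with it the theorem.
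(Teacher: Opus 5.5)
Your proposal is correct. It follows the paper's overall frame strategy, but its transitivity step takes a genuinely different route.

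\textbf{How the paper argues.} It never determines the global root system of $L_A(C)$. It argues locally and iteratively: take $\lambda\in F\setminus R(2)$, normalize it by $\operatorname{Aut}(R)$, and produce an automorphism that strictly increases $\#(F\cap R(2))$.
\begin{enumerate}
\item For $p=3$ this is a single reflection in an explicit root such as $\varphi(\lambda)-\alpha_1^{i_1}-\alpha_2^{i_1}$, after a two-case analysis.
\item For $p=5$ it is an element of $W(L_{\varphi(\lambda)})$ with $L_{\varphi(\lambda)}\cong E_8$. This element is supplied by the counting Lemma~\ref{transitivity for E_8}, which shows that $W(E_8)$ is transitive on $A_4$-chains of roots.
\end{enumerate}

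\textbf{How you argue.} You decompose $L_A(C)(2)$ into irreducible components $E$. You note that $F$ and $R(2)$ each meet every $E$ in a full-rank $A_{p-1}^m$-subsystem, so a product of the groups $W(E)$ suffices. This reduces everything to the classical $W$-conjugacy of $3A_2\subset E_6$, $4A_2\subset E_8$ and $2A_4\subset E_8$.

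\textbf{Comparison.} Your version is more conceptual and shows at a glance why only $p=3,5$ are exceptional. The paper's version is self-contained and never needs to classify components. Your reduction via $\operatorname{Aut}(A_{p-1})=W(A_{p-1})\times\{\pm1\}$, with $W$ acting trivially on $A_{p-1}^*/A_{p-1}$, is a cleaner form of Proposition~\ref{transitivity of frame implies isomorphic}.

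\textbf{Points to tighten.}
\begin{enumerate}
\item \emph{The component list.} Replace ``$E_6$, $E_8$, $D_4$, and so on'' by an argument. The spans of $F\cap E$ for different components are mutually orthogonal, and $F$ has full rank. Hence each $E$ is an irreducible simply-laced system of rank $(p-1)m$ containing a full-rank $A_{p-1}^m$. Full-rank subsystems of $A_n$ and $D_n$ contain no $A_2$ or $A_4$ factors, and ranks rule out $E_7$ (and $E_6$ when $p=5$). So $E\in\{A_2,E_6,E_8\}$ for $p=3$ and $E\in\{A_4,E_8\}$ for $p=5$. In particular $D_4$ never occurs, since it contains no $A_2^2$, and no ``gluing diagram automorphisms'' are needed.
\item \emph{Types versus classes.} Borel--de Siebenthal gives only types, not $W$-classes; several types in $E_7$ and $E_8$ split into two classes. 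So you need Dynkin's or Oshima's tables, or a short argument:
\begin{enumerate}
\item $W(E_6)$ is transitive on its $120$ subsystems of type $A_2$, and each lies in a unique $3A_2$, because its orthogonal complement is $A_2^2$.
\item $W(E_8)$ is transitive on $A_2$'s, whose complement is $E_6$; combined with the previous point this gives transitivity on $4A_2$'s.
\item For $2A_4\subset E_8$, each $A_4$ has complement $A_4$, so transitivity on $A_4$'s suffices. That is exactly the content of Lemma~\ref{transitivity for E_8}.
\end{enumerate}
\item \emph{The case $p\ge 7$.} When $p\equiv1\pmod 4$, weight-$2$ codewords with $a^2+b^2\equiv0$ do exist. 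Excluding roots from them needs the explicit bound $(p-1)/p+2(p-2)/p>2$, which you should state.
\end{enumerate}
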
 
We start with preparations to prove the above theorem.

\subsection{$A_n$-frames of lattices}

In this subsection, we define  the notion of an $A_n$-frame of a lattice and describe some properties of $A_{p-1}$-frames of $L_A(C)$.
\begin{definition}
Let $m$ and $n$ be positive integers, and $L$ an even lattice. A finite subset $F$ of $L(2)$ is called  an \emph{$A_n$-frame} of $L$ if the following conditions hold:
\begin{enumerate}
\item $F$ forms a  root system of type $A_n^m$. 
\item $\operatorname{rank}L=mn$.
\end{enumerate}
\end{definition}
Let $R$ be the lattice $A_{p-1}^k$. By the definition of $L_A(C)$, we have $R(2) \subset L_A(C)$. Hence, $R(2)$ is an $A_{p-1}$-frame of $L_A(C)$.

By Lemmas~\ref{minimal norm} and \ref{lower bound}, we have the following lemma:
\begin{lemma}\label{non-trivial root}
$L_A(C)(2)=R(2)$ if $p \ge 7$.
\end{lemma}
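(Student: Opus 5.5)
We show that for $p \ge 7$ every norm-$2$ vector of $L_A(C)$ lies in $R=A_{p-1}^k$. The inclusion $R(2)\subset L_A(C)(2)$ holds by construction. For the converse, write $x=(x_1,\ldots,x_k)\in L_A(C)(2)$ with $x_i\in A_{p-1}^*$ and
\[(x,x)=\sum_{i=1}^k (x_i,x_i)=2 .\]
It suffices to show that each nonzero component $x_i$ lies in $A_{p-1}$. Then $x\in R$, so $x\in R(2)$.

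Suppose some component lies outside $A_{p-1}$. Then $\pi(x)$ is a nonzero codeword $c\in C$. Since $C$ is self-orthogonal, the weight condition bounds how few coordinates can carry nonzero entries. A nonzero $c$ with $(c,c)=0$ in $\mathbb{F}_p$ must have weight at least $2$: a single nonzero entry $a$ would give $a^2\equiv 0$, which is impossible. Hence at least two components $x_i$ lie in nontrivial cosets of $A_{p-1}$ in $A_{p-1}^*$. By Lemma~\ref{minimal norm}, each of these has norm at least $(p-1)/p$.

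We split into cases by how many components are outside $A_{p-1}$.
\begin{enumerate}
\item If there are at least three, the total norm is at least $3(p-1)/p>2$, since $p>3$. This is a contradiction.
\item If there are exactly two, say $x_i$ and $x_j$, then every other component lies in $A_{p-1}$. Any nonzero such component has norm at least $2$, which would push the total above $2$, so all other components vanish. We therefore need $(x_i,x_i)+(x_j,x_j)=2$ with both terms at least $(p-1)/p$.
\end{enumerate}

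In the second case, if one of the two, say $x_i$, is not a minimal vector of $A_{p-1}^*$, then Lemma~\ref{lower bound} (applied with $n=p\ge 7$) gives $(x_i,x_i)>(p+1)/p$. Then
\[(x_i,x_i)+(x_j,x_j)>\frac{p+1}{p}+\frac{p-1}{p}=2,\]
a contradiction. So both $x_i$ and $x_j$ are minimal vectors of norm $(p-1)/p$, and the total is $2(p-1)/p<2$, again a contradiction.

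The only delicate point is making sure the norm gaps fit exactly. The decisive input is the strict inequality in Lemma~\ref{lower bound}, which requires $p\ge 7$. In the case of two minimal vectors the sum falls short of $2$, and in the case of one non-minimal vector it exceeds $2$. Together these exclude all vectors outside $R$.
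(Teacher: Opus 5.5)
Your proof is correct and follows the paper's argument. Both proofs bound the component norms using $N_1(A_{p-1}^*)=(p-1)/p$ and $N_2(A_{p-1}^*)>(p+1)/p$, and check that no admissible combination sums to exactly $2$. The one difference is that you rule out a single component outside $A_{p-1}$ using the weight-$\ge 2$ property of self-orthogonal codes, whereas the paper uses $A_{p-1}^*(2)=A_{p-1}(2)$ together with the fact that $m(p-1)/p\neq 2$; your explicit case split supplies details the paper's short proof leaves implicit.
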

\begin{proof}
Let $p$ be an odd prime such that $p \ge 7$. We determine all roots of $L_A(C)$. By Lemmas~\ref{minimal norm}~and~\ref{lower bound}, we have $N_1(A_{p-1}^{*})=(p-1)/p$ and $N_2(A_{p-1}^*)> (p+1)/p$. 
Moreover, there exist no positive integers $m$ such that $m(p-1)/p =2$. Since $A_{p-1}^*(2)=A_{p-1}(2)$, we have $L_A(C)=R(2)$.  
\end{proof}

Let $A_{p-1,i}$ denotes the lattice $A_{p-1}$ in the $i$-th coordinate of the lattice $R$. We define $\alpha_j^{i}$ and $\varepsilon_j^{i}$ by the elements $\alpha_j$ and $\varepsilon_j$ of $A_{p-1,i}$, respectively. 

Note that the automorphism group $\operatorname{Aut}(L_A(C))$ acts naturally on the set of $A_{p-1}$-frames of $L_A(C)$.
The following proposition implies that the structures of codes are recovered by the transitivity. 
\begin{proposition}\label{transitivity of frame implies isomorphic}
Suppose that  $\operatorname{Aut}(L_A(D))$ acts transitively on the set of $A_{p-1}$-frames of $L_A(D)$. If $L_A(C) \cong L_A(D)$ as lattices,  then we have $C \cong D$ as codes. 
\end{proposition}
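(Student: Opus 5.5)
Let $\varphi: L_A(C) \to L_A(D)$ be an isometry. Since $\varphi$ preserves norms, it maps $L_A(C)(2)$ onto $L_A(D)(2)$ and preserves root systems. Write $R = A_{p-1}^k$. The standard frame $R(2)$ of $L_A(C)$ is sent to a subset $\varphi(R(2))$ of $L_A(D)(2)$ that forms a root system of type $A_{p-1}^k$. Because $\operatorname{rank} L_A(D) = k(p-1)$, this image is an $A_{p-1}$-frame of $L_A(D)$. By the transitivity hypothesis there is $g \in \operatorname{Aut}(L_A(D))$ with $g\varphi(R(2)) = R(2)$, where now $R(2)$ means the standard frame of $L_A(D)$. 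Replacing $\varphi$ by $g\varphi$, we may assume $\varphi$ preserves $R(2)$. Then $\varphi$ preserves the sublattice $R$ spanned by $R(2)$, and hence also $R^* = (A_{p-1}^*)^k$, since $\varphi$ extends to the ambient rational space. Consequently $\varphi$ induces an isomorphism $R^*/R \to R^*/R$ carrying $L_A(C)/R = C$ onto $L_A(D)/R = D$.

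Next I would pin down what this induced map on $(A_{p-1}^*/A_{p-1})^k \cong \mathbb{F}_p^k$ looks like. An automorphism of the root system $A_{p-1}^k$ permutes the $k$ irreducible components. On each component it acts by an element of $\operatorname{Aut}(A_{p-1}) = W(A_{p-1}) \times \{\pm 1\}$, which is $S_p \times \{\pm1\}$ for $p \ge 3$. The Weyl group acts trivially on $A_{p-1}^*/A_{p-1}$. The element $-1$ acts as $-1$ on $\mathbb{Z}_p$; this is also how the diagram automorphism acts, since it sends $\varepsilon_1 \mapsto \varepsilon_{p-1}$ modulo the root lattice, and $\varepsilon_{p-1} \equiv -\varepsilon_1$. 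Hence the induced map on $\mathbb{F}_p^k$ is a coordinate permutation composed with sign changes $\sigma_i$. This is exactly an element of $\langle \sigma_i, \sigma \rangle$ as in the definition of code isomorphism, so it maps $C$ onto $D$ and gives $C \cong D$.

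The main obstacle is being careful at two points. First, one must check that $\varphi$ restricted to $R$ is genuinely an automorphism of the root system, that is, that the image of each irreducible $A_{p-1}$ component is again a component. This is automatic, because irreducible components are determined intrinsically by the orthogonality structure of the roots. Second, one must check that the identification of $A_{p-1}^*/A_{p-1}$ with $\mathbb{F}_p$ via $\varepsilon_1$ is compatible with the action described above. I would verify the latter by noting that $\varepsilon_j \equiv \varepsilon_1 \pmod{A_{p-1}}$ for all $j$, since $\varepsilon_j - \varepsilon_{j+1} = \alpha_j$. Hence permutations of coordinates within a component fix the class of $\varepsilon_1$, and only $\pm1$ survives.
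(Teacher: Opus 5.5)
Your proposal is correct and is essentially the paper's argument. In both, transitivity moves the image frame back to $R(2)$, and the induced map on $R^*/R$ is then read off as a signed coordinate permutation. The only difference is that you use $\operatorname{Aut}(A_{p-1})=W(A_{p-1})\times\{\pm1\}$ with $W$ acting trivially on $A_{p-1}^*/A_{p-1}$, whereas the paper composes with Weyl elements sending each image base to the standard base and then separates the identity case from the diagram-flip case.

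One slip, which does not affect the argument: since all $\varepsilon_j\equiv\varepsilon_1 \pmod{A_{p-1}}$ (as you note yourself), the claim $\varepsilon_{p-1}\equiv-\varepsilon_1$ is false. The flip actually sends $\varepsilon_1$ to $\varepsilon_1+\cdots+\varepsilon_{p-1}=-\varepsilon_p\equiv-\varepsilon_1$.
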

\begin{proof}

Let $f : L_A(C) \rightarrow L_A(D)$ be an isomorphism of lattices. Then $f(R(2))$ is an $A_{p-1}$-frame of $L_A(D)$, where $R=A_{p-1}^k$.  By the assumption, there exists $\sigma \in \operatorname{Aut}(L_A(D))$ such that $\sigma f(R(2)) = R(2)$. Since $A_{p-1}(2)$ is an irreducible root system, $\sigma f$ induces a permutation of $\{A_{p-1,i}(2) \mid 1\le i \le k\}$.

Let $\sigma f(A_{p-1,i}(2)) = A_{p-1,\rho(i)}(2)$ and $\Delta_{i} = \{\alpha_j^{i} \mid 1\le j \le p-1\}$, where $\rho$ is a permutation of $\{1,2, \ldots, k\}$.  Then $\sigma f(\Delta_i)$ is a base of $A_{p-1, \rho(i)}(2)$. Since the Weyl group $W(A_{p-1, \rho(i)}(2))$ acts transitively on the set of bases of $A_{p-1, \rho(i)}(2)$, for each $i$, there exists $r_i \in W(A_{p-1, \rho(i)}(2))$ such that $r_i \sigma f(\Delta_i)=\Delta_{\rho(i)}$. This implies for each $i$, we have $r_i\sigma f(\alpha_j^{i})=\alpha_j^{\rho(i)}$ for any $j$ or $r_i\sigma f(\alpha_j^{i})=\alpha_{p-j}^{\rho(i)}$ for any $j$. Note that if the latter holds, then $r_i\sigma f(\varepsilon_1^{i}) \equiv -\varepsilon_1^{\rho(i)} \mod R$.  Hence, the map $r_1r_2 \cdots r_k \sigma f$ induces $C \cong D$ as codes.   
\end{proof}

By Proposition~\ref{transitivity of frame implies isomorphic}, it suffices to prove the transitivity of the automorphism group. By Lemma~\ref{non-trivial root}, we consider the cases $p=3,5$.

\subsection{The case $p=3$}
Following \cite{Kondo}, we explain this case.
Let $\alpha_i\ (1\le i \le 2)$ and $\varepsilon_j\ (1 \le j \le 3)$ be the vectors defined in Subsection \ref {Def of Const A and B}.
By (\ref{innerproduct}) and direct calculations, we have the following lemma:

\begin{lemma}\label{square norm of short vectors of A_2^{*}} 
If $x \in A_2^{*}$ and $0<(x,x) <2$, then we have $(x,x)=2/3$ and $A_2^{*}(2/3)= \{\pm\varepsilon_1,\pm\varepsilon_2, \pm\varepsilon_3\}$.
\end{lemma}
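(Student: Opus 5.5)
We work in the coordinates of (\ref{innerproduct}) with $n=3$. Every $x\in A_2^*$ can be written uniquely as $x=x_1\varepsilon_1+x_2\varepsilon_2$ with $x_1,x_2\in\mathbb{Z}$, since $\{\varepsilon_1,\varepsilon_2\}$ is a basis of $A_2^*$. Then
\[3(x,x)=x_1^2+x_2^2+(x_1-x_2)^2 .\]
The right-hand side $Q(x_1,x_2)$ is a non-negative even integer, because $x_1^2+x_2^2+(x_1-x_2)^2\equiv 2(x_1^2+x_2^2)-2x_1x_2 \equiv 0 \bmod 2$. Hence $(x,x)\in\frac{2}{3}\mathbb{Z}_{\ge 0}$. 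The condition $0<(x,x)<2$ therefore means $Q\in\{2,4\}$.

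First we rule out $Q=4$. Write $a=x_1$, $b=x_2$, $c=x_1-x_2$, so that $a-b-c=0$ and $a^2+b^2+c^2=4$. The only decompositions of $4$ into three integer squares are permutations of $(\pm2,0,0)$. In each of them, the linear relation $a=b+c$ would force $\pm2=0$ or $0=\pm 2$, which is impossible. So $(x,x)=4/3$ does not occur, and $(x,x)=2/3$ is the only possible value in the open interval.

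Next we list the vectors with $Q=2$. Here $\{a^2,b^2,c^2\}=\{1,1,0\}$, which gives exactly the six pairs
\[(x_1,x_2)\in\{(\pm1,0),\,(0,\pm1),\,\pm(1,1)\}.\]
These correspond to $\pm\varepsilon_1$, $\pm\varepsilon_2$ and $\pm(\varepsilon_1+\varepsilon_2)$. Since $\varepsilon_1+\varepsilon_2+\varepsilon_3=0$, we have $\varepsilon_1+\varepsilon_2=-\varepsilon_3$, so this set is $\{\pm\varepsilon_1,\pm\varepsilon_2,\pm\varepsilon_3\}$. As a check, (\ref{varepsilon:inner product}) gives $(\varepsilon_i,\varepsilon_i)=2/3$ directly.

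The only real content is the parity and the three-squares case check above; the rest is bookkeeping.
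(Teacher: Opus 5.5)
Your proof is correct and follows the same approach as the paper, which only cites (\ref{innerproduct}) with $n=3$ and ``direct calculations.'' You carry out that calculation explicitly: the parity of $x_1^2+x_2^2+(x_1-x_2)^2$, ruling out the value $4$, and listing the six solutions of value $2$ using $\varepsilon_1+\varepsilon_2+\varepsilon_3=0$.
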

Let $X_1 =\{\varepsilon_1, \varepsilon_2, \varepsilon_3\}$ and $X_2 =\{-\varepsilon_1, -\varepsilon_2, -\varepsilon_3\}$.   Note that $A_2^{*}(2/3)=X_1 \cup X_2$ (disjoint) and  
\begin{equation}\label{mod A_2}
x \equiv i\varepsilon_1 \mod A_2
\end{equation}
for  $i \in \{1,2\}$ and $ x \in X_i $. 


\begin{lemma}\label{action of  the automorphism group of A_2}
The following hold:
\begin{enumerate}
\item Let $\sigma \in \operatorname{Aut}(A_2)$ be the map defined by $\alpha_1 \mapsto \alpha_2$ and $\alpha_2 \mapsto \alpha_1$.Then $\sigma(X_1) = X_2$.  
\item The Weyl group $W(A_2)$ acts transitively on $X_1$ and $X_2$. 
\item \label{the statemant for the automorphism group of A_2}  The group $\operatorname{Aut}(A_2)$ acts transitively on $A_2^{*}(2/3)$.
\end{enumerate}
\end{lemma}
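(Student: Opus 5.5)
Since everything here lives in the rank-two lattice $A_2\subset\mathbb{R}^3$, I will work with explicit coordinates and prove each of the three parts by direct computation. For $n=3$ the definition gives $\varepsilon_1=\frac13(2,-1,-1)$, $\varepsilon_2=\frac13(-1,2,-1)$ and $\varepsilon_3=\frac13(-1,-1,2)$. Hence $\varepsilon_i=e_i-\frac13(e_1+e_2+e_3)$, and consequently $\varepsilon_1+\varepsilon_2+\varepsilon_3=0$. I will also use $\alpha_1=e_1-e_2=\varepsilon_1-\varepsilon_2$ and $\alpha_2=e_2-e_3=\varepsilon_2-\varepsilon_3$.

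For (2), I first compute the simple reflections $s_{\alpha}(x)=x-(x,\alpha)\alpha$. The reflection $s_{\alpha_1}$ swaps the first two coordinates and $s_{\alpha_2}$ swaps the last two. So $W(A_2)$ acts on $\mathbb{R}^3$ as the full permutation group $S_3$ of coordinates. Since $\varepsilon_i$ is $e_i$ shifted by a permutation-invariant vector, this group permutes $X_1=\{\varepsilon_1,\varepsilon_2,\varepsilon_3\}$ as $S_3$ permutes $\{1,2,3\}$, which is transitive. Applying the linear maps to $-\varepsilon_i$ gives transitivity on $X_2$ as well.

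For (1), I need to identify the map $\sigma$ concretely. The automorphism $\sigma$ is determined on $\mathbb{Q}\otimes A_2$ by $\alpha_1\leftrightarrow\alpha_2$. I claim it equals $x\mapsto -w_0(x)$, where $w_0$ is the coordinate transposition $(1\,3)$, the longest Weyl element. To check this, note $-w_0(e_1-e_2)=-(e_3-e_2)=e_2-e_3=\alpha_2$, and similarly $-w_0(\alpha_2)=\alpha_1$. A linear map is determined by its values on a basis of $\mathbb{Q}\otimes A_2$, so $\sigma=-w_0$ on the ambient space, and in particular on $A_2^*$. Then $\sigma(\varepsilon_1)=-\varepsilon_3$, $\sigma(\varepsilon_2)=-\varepsilon_2$ and $\sigma(\varepsilon_3)=-\varepsilon_1$. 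Thus $\sigma(X_1)=X_2$.

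For (3), the subgroup $\langle W(A_2),\sigma\rangle\subseteq\operatorname{Aut}(A_2)$ acts on $A_2^*$, since automorphisms of $A_2$ preserve the dual lattice. Every automorphism is an isometry, so this action preserves $A_2^*(2/3)$. By Lemma~\ref{square norm of short vectors of A_2^{*}}, $A_2^*(2/3)=X_1\cup X_2$. Then (2) gives transitivity within each $X_i$, and (1) connects $X_1$ with $X_2$, so the action is transitive on the union. The only point that needs care is extending $\sigma$ from $A_2$ to $A_2^*$, which is handled by working in $\mathbb{Q}\otimes A_2$ as above; otherwise the argument is routine verification.
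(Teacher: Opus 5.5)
Your proof is correct and follows the same route as the paper. The simple reflections permute $\varepsilon_1,\varepsilon_2,\varepsilon_3$ as $S_3$, the diagram automorphism $\sigma$ exchanges $X_1$ and $X_2$, and (3) combines these two facts. The differences are cosmetic: you check (1) by identifying $\sigma=-w_0$ explicitly instead of the paper's coset argument $\sigma(\varepsilon_1)=2\varepsilon_1-\alpha_1\equiv 2\varepsilon_1 \bmod A_2$, and you get transitivity on $X_2=-X_1$ from linearity rather than by conjugating with $\sigma$.
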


\begin{proof}
(1) Since $\varepsilon_1 =(1/3)(2\alpha_1+\alpha_2)$, we have $\sigma(\varepsilon_1)=2\varepsilon_1-\alpha_1$.
By (\ref{mod A_2}), we have $\sigma(X_1)=X_2$.
 
\noindent (2) Let $r_{\alpha_i}\ (1 \le i \le 2)$ be the reflection induced by the simple root $\alpha_i$.  By the definition, $r_{\alpha_1}$ permutes $\varepsilon_1 , \varepsilon_2$ and  fixes $\varepsilon_3$. Furthermore, $r_{\alpha_2}$ permutes $\varepsilon_2,\ \varepsilon_3$ and fixes $\varepsilon_1$.  Hence, we can identify the generated group $\langle r_{\alpha_1}, r_{\alpha_2} \rangle \subset W(A_2)$ with the symmetric group of the set $X_1$. This means that $W(A_2)$ acts transitively on $X_1$. Moreover, since $\sigma(X_1)=X_2$ and $\sigma W(A_2) \sigma^{-1}=W(A_2)$, the transitivity of $W(A_2)$ on $X_1$ implies the transitivity of $W(A_2)$ on $X_2$. 

\noindent (3) The transitivity of $\operatorname{Aut}(A_2)$ follows from (1) and (2). 
\end{proof}

\begin{proposition}\label{transitivity for p=3}
The group $\operatorname{Aut}(L_A(C))$ acts transitively on the set of $A_2$-frames of $L_A(C)$.
\end{proposition}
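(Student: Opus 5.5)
The plan is to determine the root system $L_A(C)(2)$ explicitly, show that every $A_2$-frame sits inside it in an essentially unique way component by component, and then use Weyl group elements to move any frame onto $R(2)$. Weyl group elements lie in $\operatorname{Aut}(L_A(C))$ because a reflection $r_\alpha$ in a root $\alpha$ of an even lattice preserves the lattice.

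First I will compute the roots. Let $x=(x_1,\ldots,x_k)\in L_A(C)$ with $(x,x)=2$, so $x_i\in A_2^*$ and $\pi(x)\in C$. By Lemma~\ref{square norm of short vectors of A_2^{*}}, each nonzero $x_i$ with $x_i \notin A_2$ has norm at least $2/3$, and the value $2/3$ is attained only on $X_1\cup X_2$. Hence either $x$ is a root of a single $A_{2,i}$, or exactly three coordinates of $x$ lie in $A_2^*(2/3)$ and the rest vanish. In the second case $\pi(x)$ is a codeword of weight $3$, and by (\ref{mod A_2}) each $x_i$ lies in $X_{c_i}$ for $c=\pi(x)$.

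Next I will analyse the weight-$3$ codewords. Let $c,d$ be weight-$3$ codewords with $d\neq\pm c$. Weights in a self-orthogonal ternary code are divisible by $3$, since $(c,c)$ equals the weight mod $3$. If the supports meet in one coordinate, then $(c,d)=\pm1\neq0$, which contradicts self-orthogonality. If they meet in two or three coordinates, then $c+d$ or $c-d$ is a nonzero word of weight $1$, $2$ or $4$, which is impossible. So the supports of weight-$3$ codewords (up to sign) are pairwise disjoint. For each such pair $\pm c$, the roots supported on $\operatorname{Supp}c$ are the $18$ roots of $A_2^3$ together with $2\cdot 3^3=54$ further roots, one for each choice $x_i\in X_{c_i}$ or $x_i\in X_{-c_i}$. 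That gives $72$ roots of rank $6$, and I will check that this is $E_6$ by noting it is an irreducible simply-laced system of rank $6$ containing $A_2^3$. Therefore
\[L_A(C)(2)\cong E_6^{m}\oplus A_2^{k-3m},\]
where $m$ is the number of pairs $\pm c$ in $C(3)$.

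Now let $F$ be an arbitrary $A_2$-frame. Each irreducible $A_2$ component of $F$ lies inside a single irreducible component of $L_A(C)(2)$. Since $F$ has full rank $2k$, a rank count forces each $E_6$ component to contain exactly three $A_2$ components of $F$, and each $A_2$ component to equal one component of $F$. Hence $F$ is a union of an $A_2^3$-subsystem of each $E_6$ component together with all the $A_2$ components. The key input, and what I expect to be the main obstacle, is that $W(E_6)$ acts transitively on its root subsystems of type $A_2^3$. I would first invoke the Borel--de Siebenthal/Dynkin classification: full-rank subsystems of type $A_2^3$ arise from deleting the central node of the extended $E_6$ diagram, and they form a single conjugacy class. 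If a self-contained argument is preferred, I would instead count: there are $40$ such subsystems, and the stabilizer of the standard one has order $|W(A_2)^3|\cdot 2\cdot 3\cdot$(suitable factor), which should match $|W(E_6)|/40 = 1296$. Alternatively, I would show directly that one $A_2$ can be moved to a fixed one using the transitivity of $W(E_6)$ on $A_2$-subsystems, and that its centralizer $A_2^2$ is then forced. Lemma~\ref{action of  the automorphism group of A_2} can help identify the extra roots concretely.

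Finally, I will choose $w_j\in W(E_6^{(j)})$ for each $E_6$ component $E_6^{(j)}$ of $L_A(C)(2)$ mapping $F\cap E_6^{(j)}$ onto the standard $A_2^3$ on $\operatorname{Supp}c$. The product of the $w_j$ lies in $W(L_A(C)(2))\subset\operatorname{Aut}(L_A(C))$ and maps $F$ onto $R(2)$. This proves transitivity.
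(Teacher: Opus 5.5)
Your description of $L_A(C)(2)$ fails at the claim that distinct pairs $\pm c$ in $C(3)$ have disjoint supports. You are right that overlaps in one or three coordinates are impossible. But if $\operatorname{Supp}c\cap\operatorname{Supp}d=\{i,j\}$, then $(c,d)=0$ forces $c_id_i=-c_jd_j$. Exactly one of the two overlapping coordinates then cancels in $c+d$, and exactly one cancels in $c-d$, so both have weight $3$ and there is no contradiction. A concrete counterexample is the ternary tetracode $C=\langle(1,1,1,0),(0,1,2,1)\rangle$. It is self-dual, all eight of its nonzero words have weight $3$, and their supports are all four $3$-subsets of $\{1,2,3,4\}$. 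Here $L_A(C)$ has $24+8\cdot 27=240$ roots and is even unimodular of rank $8$, so $L_A(C)\cong E_8$, which is not of the form $E_6^m\oplus A_2^{k-3m}$. In general such tetracode blocks are closed: any further weight-$3$ word meeting $\{1,2,3,4\}$ would meet one of the four supports in exactly one coordinate. The correct decomposition is therefore $L_A(C)(2)\cong E_8^{a}\oplus E_6^{b}\oplus A_2^{k-4a-3b}$.

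Your rank count still works and forces $F\cap E_8^{(j)}$ to be an $A_2^4$-subsystem of each $E_8$ component. So the missing ingredient is transitivity of $W(E_8)$ on $A_2^4$-subsystems, which your proposal does not address. It is true and follows from your own $E_6$ idea:
\begin{enumerate}
\item $W(E_8)$ is transitive on $A_2$-subsystems, since the stabilizer $W(E_7)$ of a root $\alpha$ is transitive on the $56$ roots $\beta$ with $(\alpha,\beta)=-1$.
\item The roots orthogonal to a given $A_2$ form an $E_6$, and its Weyl group fixes that $A_2$ pointwise.
\item $W(E_6)$ is transitive on its $A_2^3$-subsystems.
\end{enumerate}
With this case added, your global argument goes through.

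The paper avoids classifying $L_A(C)(2)$ at all. It normalises a root $\lambda\in F\setminus R(2)$ to $\sum_{i\in\operatorname{Supp}c}\varepsilon_1^i$ by some $\varphi\in\operatorname{Aut}(R)$. It then applies one reflection in a root $v=\varphi(\lambda)-\alpha_1^{j}-\alpha_2^{j}$, which fixes the frame roots already in $R(2)$ and sends $\varphi(\lambda)$ into $R(2)$. This increases $\#(F\cap R(2))$ and never needs to distinguish $E_6$ from $E_8$ components.
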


\begin{proof}
	Let $F$ be an $A_2$-frame of $L_A(C)$ and $R=A_2^k$. Then $F$ is expressed as $F=F_1\cup \cdots \cup F_k$ (disjoint), where $F_i \perp F_j$ for $i \ne j$ and $F_i$ forms a root system of type $A_2$ for each $i$. To prove the transitivity, we prove that if $\#(F \cap R(2)) < \#R(2)$, then there exists $\psi \in \operatorname{Aut}(L_A(C))$ such that $ \#(F \cap R(2))<\#(\psi(F) \cap R(2))$. 

Let $\lambda \in F \setminus R(2)$ and we may assume $\lambda \in F_1$. By Lemmas \ref{square norm of short vectors of A_2^{*}} and \ref{action of  the automorphism group of A_2} (\ref{the statemant for the automorphism group of A_2}), there exist $c \in C(3)$ and $\varphi \in \operatorname{Aut}(R)$ such that $\varphi(\lambda) = \sum_{i \in \operatorname{Supp}c}\varepsilon_1^{i}$. Then there exists $i_1 \in \operatorname{Supp}c$ such that $\varphi(F_1) \cap R(2) \subset \{\pm \alpha _1^{i_1}\}$ or $\varphi(F_1) \cap R(2) \subset \{\pm (\alpha _1^{i_1}+\alpha_2^{i_1})\}$. By considering the reflection induced by $\alpha_2^{i_1}$, we may assume $\varphi(F_1) \cap R(2) \subset \{\pm \alpha _1^{i_1}\}$.

\noindent (i) The case  $\varphi(F_1) \cap R(2) = \{\pm \alpha _1^{i_1}\}$	

In this case, we have $(\varphi(F) \setminus \varphi(F_1)) \cap (\bigcup_{i \in \operatorname{Supp c}}A_{2,i}(2)) \subset \{\pm \alpha_2^{i} \mid  i \in \operatorname{Supp}c \setminus \{i_1\} \}$. Let $v= \varphi(\lambda) - \alpha_1^{i_1}-\alpha_2^{i_1}$ and $r_v$ the reflection induced by the vector $v$. By the definition of reflection, we see that $r_v(x) = x- (x,v)v$ and $r_v \in \operatorname{Aut}(\varphi(L_A(C)))$. By direct calculations, we have $r_v(x)=x$ for any $x \in \varphi(F) \cap R(2)$ and $r_v(\varphi(\lambda)) = \alpha_1^{i_1}+\alpha_2^{i_1}$. Hence, we have $\varphi^{-1} r_v \varphi \in \operatorname{Aut}(L_A(C))$ such that $\#(\varphi^{-1} r_v \varphi(F) \cap R(2)) > \#(F \cap R(2))$.

\noindent (ii) The case  $\varphi(F_1) \cap R(2) = \emptyset$	

We first prove that there exist $j_1, j_2 \in \operatorname{Supp}c$ such that  
\begin{equation}\label{condition for roots}
\varphi(F) \cap (\bigcup\nolimits_{i \in \operatorname{Supp}c}A_{2,i}(2)) \subset \{\pm \alpha_2^{j_1}, \pm \alpha_2^{j_2}\}.
\end{equation}
To prove (\ref{condition for roots}), we suppose that
$\varphi(F) \cap (\bigcup_{i \in \operatorname{Supp}c}A_{2,i}(2)) =   \{\pm \alpha_2^{i} \mid  i \in \operatorname{Supp}c\}$. Since  $\varphi(F_1) \cap R(2) = \emptyset$, there exists $\lambda' \in \varphi(F_1)$ such that $(\varphi(\lambda),\lambda') = -1$ and $\lambda' \notin R(2)$. Since $\lambda' \perp  \{\pm \alpha_2^{i} \mid  i \in \operatorname{Supp}c\}$, we see that for each $i \in \operatorname{Supp}c$, the $i$-th coordinate of $\lambda'$ is equal to $0$ or $\varepsilon_1$ or $-\varepsilon_1$. By (\ref{varepsilon:inner product}), we have $(\varepsilon_1, \varepsilon_1)=2/3$ and $(\varepsilon_1, -\varepsilon_1)=-2/3$. This implies  $(\varphi(\lambda),\lambda') \neq -1$, which contradicts  $(\varphi(\lambda),\lambda') = -1$. Hence (\ref{condition for roots}) holds. 
Let $r_v$ be the reflection induced by the vector $v=\varphi(\lambda)  - \alpha_1^{j_3}-\alpha_2^{j_3}$, where $j_3 \in \operatorname{Supp}c \setminus \{j_1, j_2 \}$. Similarly, we see that  $\varphi^{-1} r_v \varphi \in \operatorname{Aut}(L_A(C))$ and $\#(\varphi^{-1} r_v \varphi(F) \cap R(2)) > \#(F \cap R(2))$.
\end{proof}

\subsection{The case $p=5$} 
Let $\alpha_i\ (1\le i \le 4)$ and $\varepsilon_j\ (1 \le j \le 5)$ be the vectors defined in Subsection \ref {Def of Const A and B}.
By (\ref{innerproduct}) and direct calculations, we have the following lemma:
\begin{lemma}\label{square norm of short vectors of A_4^{*}} 
If $x \in A_4^{*}$ and $0<(x,x) \le 6/5$, then $(x,x) = 4/5$ or $6/5$. Moreover, we have
\[
\begin{aligned}[t]
A_4^{*}(4/5) &= \{\pm \varepsilon_i \mid 1 \le i \le 5\},\\
A_4^{*}(6/5) &= \{\pm(\varepsilon_i+\varepsilon_j) \mid 1 \le i < j \le 4\}\\
             &\quad \cup \{\pm (\varepsilon_i+\varepsilon_j+\varepsilon_k)
             \mid 1 \le i < j < k \le 4\}.
\end{aligned}
\]
\end{lemma}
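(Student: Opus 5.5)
We compute directly with the quadratic form (\ref{innerproduct}) for $n=5$. Write $x=x_1\varepsilon_1+x_2\varepsilon_2+x_3\varepsilon_3+x_4\varepsilon_4\in A_4^*$ with $x_i\in\mathbb{Z}$, so that
\[5(x,x)=S_1+S_2,\qquad S_1=\sum_{i=1}^4 x_i^2,\qquad S_2=\sum_{1\le i<j\le 4}(x_i-x_j)^2 .\]
The condition $0<(x,x)\le 6/5$ becomes $0<S_1+S_2\le 6$. The plan is to enumerate the integer vectors $(x_1,\dots,x_4)$ satisfying this bound, compute $S_1+S_2$ for each, and then identify the resulting vectors with the listed ones, using $\varepsilon_5=-(\varepsilon_1+\cdots+\varepsilon_4)$.

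First we bound the entries. If all $x_i$ are equal and nonzero, then $S_2=0$ and $S_1=4x_1^2$. This forces $x_1=\pm1$, giving $x=\pm(\varepsilon_1+\cdots+\varepsilon_4)=\mp\varepsilon_5$ with $5(x,x)=4$. If the $x_i$ are not all equal, Lemma~\ref{lem} gives $S_2\ge 3$, hence $S_1\le 3$. So at most three coordinates are nonzero, each equal to $\pm1$.

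Next we go through the supports.
\begin{enumerate}
\item One nonzero coordinate, $x=\pm\varepsilon_i$: here $S_1=1$ and $S_2=3$, total $4$.
\item Two nonzero coordinates of the same sign: $S_1=2$ and $S_2=4$, total $6$.
\item Two nonzero coordinates of opposite sign: $S_2=4+4+1\cdot 2=10$, too large (this case is $\varepsilon_i-\varepsilon_j$, of norm $2$).
\item Three nonzero coordinates of the same sign: $S_1=3$ and $S_2=3$, total $6$.
\item Three nonzero coordinates of mixed sign: $S_2\ge 4\cdot2+\cdots>3$, too large.
\end{enumerate}
Hence the only attainable values are $S_1+S_2=4$ and $6$, that is, $(x,x)=4/5$ or $6/5$.

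Reading off the solutions gives the stated description. Norm $4/5$ consists of $\pm\varepsilon_i$ for $i\le4$ together with $\pm\varepsilon_5$. Norm $6/5$ consists of $\pm(\varepsilon_i+\varepsilon_j)$ and $\pm(\varepsilon_i+\varepsilon_j+\varepsilon_k)$ with indices in $\{1,\dots,4\}$. The only real care needed is the bookkeeping of the case analysis: the all-equal case must be kept separate so that $\varepsilon_5$ is not missed, and the mixed-sign exclusions must actually be verified.
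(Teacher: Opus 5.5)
Your proof is correct and follows the paper's argument. The paper proves this lemma only ``by (\ref{innerproduct}) and direct calculations.'' Your case analysis is a faithful expansion of that computation: you use Lemma~\ref{lem} to get $S_2\ge 3$ and hence $S_1\le 3$, and you treat the all-equal case separately so that $\pm\varepsilon_5$ is not missed, which is the same way the paper bounds $S_1$ in the proof of Lemma~\ref{lower bound}. One small slip: in case 3 the value $10$ is $S_1+S_2$, not $S_2$ (in fact $S_2=4+2+2=8$), and this does not affect the exclusion.
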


Let $X_1$ and $X_2$ be the following sets:
\[
\begin{aligned}[t]
X_1 &= \{\varepsilon_i \mid 1 \le i \le 5\},\\
X_2 &= \{\varepsilon_i+\varepsilon_j \mid 1 \le i < j \le 4\}\\
    &\quad \cup \{-(\varepsilon_i+\varepsilon_j+\varepsilon_k)
    \mid 1 \le i < j < k \le 4\}.
\end{aligned}
\]
Note that $A_4^{*}(4/5) = X_1 \cup (-X_1)$ (disjoint) and $A_4^{*}(6/5) = X_2 \cup (-X_2)$ (disjoint). Moreover, we have  
\begin{equation}\label{mod A_4}
x \equiv i\varepsilon_1 \mod A_4\ \text{and}\ 
-x \equiv (5-i)\varepsilon_1 \mod A_4
\end{equation} 
for $i \in \{1,2\}$ and $ x \in X_i $. 


\begin{lemma}\label{transitivity for A_4}
The following hold:
\begin{enumerate}
\item \label{(1)} Let $\sigma \in \operatorname{Aut}(A_4)$ be the map defined by $\alpha_1 \mapsto \alpha_4$, $\alpha_2 \mapsto \alpha_3$, $\alpha_3 \mapsto \alpha_2$, $\alpha_4 \mapsto \alpha_1$. Then $\sigma(X_1)=-X_1$ and $\sigma(X_2)=-X_2$.
\item \label{(2)} The Weyl group $W(A_4)$ acts transitively on $X_1$ and $X_2$.
\item \label{the statemant for the automorphism group of A_4} The group $\operatorname{Aut}(A_4)$ acts transitively on  $A_4^{*}(4/5)$ and $A_4^{*}(6/5)$.
\end{enumerate}
\end{lemma}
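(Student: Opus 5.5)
I will follow the proof of Lemma~\ref{action of  the automorphism group of A_2} for $A_2$, working throughout with the explicit description of $\varepsilon_i$. The key observation is that the reflection $r_{\alpha_j}$ in the simple root $\alpha_j=e_j-e_{j+1}$ swaps $e_j$ and $e_{j+1}$. Hence, by the formula defining $\varepsilon_i$, it swaps $\varepsilon_j$ and $\varepsilon_{j+1}$ and fixes the other $\varepsilon_i$. So $W(A_4)=\langle r_{\alpha_1},\dots,r_{\alpha_4}\rangle\cong S_5$ acts on $\{\varepsilon_1,\dots,\varepsilon_5\}$ by permuting the indices.

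For (\ref{(1)}), I will write $\sigma$ as the map on $\mathbb{R}^5$ given by $e_i\mapsto -e_{6-i}$. This map sends $\alpha_i=e_i-e_{i+1}$ to $e_{5-i}-e_{6-i}=\alpha_{5-i}$, so it agrees with the given $\sigma$ on $A_4$ and extends it linearly. Then $\sigma(\varepsilon_i)=-\varepsilon_{6-i}$, which gives $\sigma(X_1)=-X_1$ directly. For $X_2$ I will avoid checking each element and instead use congruences. Since $\sigma(\varepsilon_1)=-\varepsilon_5\equiv-\varepsilon_1 \bmod A_4$, $\sigma$ acts as $-1$ on $A_4^*/A_4$. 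By (\ref{mod A_4}), $X_2$ is exactly the set of vectors in $A_4^*(6/5)$ congruent to $2\varepsilon_1$, and $-X_2$ is the set congruent to $3\varepsilon_1$. Since $\sigma$ preserves norms, it maps $X_2$ into $A_4^*(6/5)\cap(3\varepsilon_1+A_4)=-X_2$.

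For (\ref{(2)}), transitivity on $X_1$ is immediate from the $S_5$ action. For $X_2$, I first rewrite the set using the relation $\sum_{i=1}^5\varepsilon_i=0$, so that $-(\varepsilon_i+\varepsilon_j+\varepsilon_k)=\varepsilon_l+\varepsilon_m$ where $\{l,m\}$ is the complement of $\{i,j,k\}$ in $\{1,\dots,5\}$. Thus $X_2=\{\varepsilon_i+\varepsilon_j\mid 1\le i<j\le 5\}$. The triples contained in $\{1,2,3,4\}$ contribute exactly the pairs containing $5$, and the pairs inside $\{1,2,3,4\}$ give the rest. Since $S_5$ acts transitively on $2$-subsets of $\{1,\dots,5\}$, $W(A_4)$ acts transitively on $X_2$.

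Part (\ref{the statemant for the automorphism group of A_4}) then follows from the decompositions $A_4^*(4/5)=X_1\cup(-X_1)$ and $A_4^*(6/5)=X_2\cup(-X_2)$. Indeed, $W(A_4)$ is transitive on each piece (on $-X_a$ because it commutes with $-1$), and $\sigma$ interchanges the two pieces. The only genuine step is recognizing that $X_2$ is the set of pairwise sums; the rest is bookkeeping, and I expect no real obstacle.
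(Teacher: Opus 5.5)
Your proposal is correct. Parts (1) and (3) are in substance the paper's argument. The paper computes $\sigma(\varepsilon_1)=4\varepsilon_1-3\alpha_1-2\alpha_2-\alpha_3$ in the $\alpha$-basis, which equals your $-\varepsilon_5$. It then obtains $\sigma(X_i)=-X_i$ from the residues mod $A_4$ and norm preservation, exactly as you do; your model $e_i\mapsto -e_{6-i}$ only makes the computation of $\sigma(\varepsilon_i)$ cleaner.

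The genuine difference is the $X_2$ half of part (2). The paper proves it by direct computation. It labels the ten elements of $X_2$ and writes each simple reflection $r_{\alpha_i}$ as an explicit product of three transpositions. It then notes that $\langle r_{\alpha_1},r_{\alpha_2},r_{\alpha_3}\rangle$ is transitive on the four ``triple'' vectors and on the six ``pair'' vectors, and that $r_{\alpha_4}$ mixes the two families.

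You instead use $\sum_{i=1}^5\varepsilon_i=0$ to identify $X_2$ $S_5$-equivariantly with the $2$-subsets of $\{1,\dots,5\}$, so transitivity is immediate. This identification explains the paper's permutation table (for instance $-(\varepsilon_1+\varepsilon_2+\varepsilon_3)=\varepsilon_4+\varepsilon_5$). Once you have it, $\sigma(X_2)=-X_2$ also follows directly from $\sigma(\varepsilon_i+\varepsilon_j)=-(\varepsilon_{6-i}+\varepsilon_{6-j})$, so the congruence step in your (1) is not even needed. It also generalizes: in $A_{n-1}^*$, the vectors $\sum_{i\in I}\varepsilon_i$ with $|I|=j$ form a single $W(A_{n-1})$-orbit of norm $j(n-j)/n$ inside $j\varepsilon_1+A_{n-1}$.

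The paper's computation stays in the basis $\varepsilon_1,\dots,\varepsilon_4$ in which $A_4^*(6/5)$ was listed and needs no reinterpretation of the set. The cost is that it is a verification rather than an explanation.
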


\begin{proof}
(1) Since $\varepsilon_1=(1/5)(4\alpha_1+3\alpha_2+2\alpha_3+\alpha_4)$, we have $\sigma(\varepsilon_1)=4\varepsilon_1-3\alpha_1-2\alpha_2-\alpha_3$. By (\ref{mod A_4}), we obtain $\sigma(X_1)=-X_1$ and $\sigma(X_2)=-X_2$.

\noindent (2) Let $r_{\alpha_i}\ (1 \le i \le 4)$ be the reflection induced by the simple root $\alpha_i$. We first consider the transitivity of $W(A_4)$ on $X_1$. By direct calculations, we see that $r_{\alpha_i}$ permutes $\varepsilon_i,\ \varepsilon_{i+1}$ and fixes the other elements of $X_1$. Hence, we can identify the generated group $\langle r_{\alpha_i} \mid 1 \le i \le 4 \rangle$ with the symmetric group of $X_1$. This implies the transitivity of $W(A_4)$ on $X_1$. 

Next, we consider the transitivity of $W(A_4)$ on $X_2$. Let 
\[
\begin{aligned}
x_1'&=-(\varepsilon_1+\varepsilon_2+\varepsilon_3),\ x_2'=-(\varepsilon_1+\varepsilon_2+\varepsilon_4),\\ 
x_3'&=-(\varepsilon_1+\varepsilon_3+\varepsilon_4),\ x_4'=-(\varepsilon_2+\varepsilon_3+\varepsilon_4)
\end{aligned}
\]
and 
\[
\begin{aligned}
x_1''&=\varepsilon_1+\varepsilon_2,\ x_2''=\varepsilon_1+\varepsilon_3,\\ 
x_3''&=\varepsilon_1+\varepsilon_4,\ x_4''=\varepsilon_2+\varepsilon_3,\\ 
x_5''&=\varepsilon_2+\varepsilon_4,\ x_6''=\varepsilon_3+\varepsilon_4.
\end{aligned}
\]
We define $X_2'$ and $X_2''$ by  
\[X_2'= \{x_i' \mid 1 \le i \le 4\}\ \text{and}\ X_2''=\{x_i'' \mid 1 \le i  \le 6\}.\] 
Then $X_2 = X_2' \cup X_2''.$ 
By direct calculations, the reflections $r_{\alpha_i}\ (1 \le i \le 4)$ are identified with permutations of $X_2$ as follows:
\[r_{\alpha_1} : (x_3'\ x_4')(x_2''\ x_4'')(x_3''\ x_5''),\ r_{\alpha_2}: (x_2'\ x_3')(x_1''\ x_2'')(x_5''\ x_6''),\]
\[ r_{\alpha_3}: (x_1'\ x_2')(x_2''\ x_3'')(x_4''\ x_5''),\ r_{\alpha_4}: (x_2'\ x_6'')(x_3'\ x_5'')(x_4'\ x_3'').\]  Hence, we see that the generated group $\langle r_{\alpha_i} \mid 1 \le i \le 3 \rangle \subset W(A_4)$ acts transitively on $X_2'$ and $X_2''$,  and $r_{\alpha_4}$ permutes elements of $X_2'$ and $X_2''$. This implies that $W(A_4)$ acts transitively on $X_2$.  

\noindent (3) Since $A_4^{*}(4/5) = X_1 \cup (-X_1)$ and $A_4^{*}(6/5) = X_2 \cup (-X_2)$, by (\ref{(1)}) and (\ref{(2)}), we have the desired result.   
\end{proof}

Let $\lambda$ be an element of $(A_4^{*})^{2}(2) \setminus A_4^{2}(2)$. We define a lattice $L_{\lambda}$ by $L_{\lambda} = \mathbb{Z}\lambda + A_4^2$.  Then the following lemmas hold:
\begin{lemma}\label{E_8}
The lattice $L_{\lambda}$ is isomorphic to the root lattice $E_8$.
\end{lemma}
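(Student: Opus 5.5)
The plan is to show that $L_{\lambda}$ is an even unimodular lattice of rank $8$. The statement then follows from the classical uniqueness of the even unimodular lattice of rank $8$, namely $E_8$. Note first that $\operatorname{rank}L_{\lambda}=8$, since $A_4^2\subset L_{\lambda}\subset \mathbb{Q}\otimes A_4^2$.

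\emph{Step 1 (evenness and integrality).} Write $\lambda=(x,y)$ with $x,y\in A_4^{*}$. Since $\lambda\in (A_4^{*})^2=(A_4^2)^{*}$, we have $(\lambda,r)\in\mathbb{Z}$ for every $r\in A_4^2$. Together with $(\lambda,\lambda)=2$ and the evenness of $A_4^2$, a general element $m\lambda+r$ has norm
\[2m^2+2m(\lambda,r)+(r,r)\in 2\mathbb{Z}.\]
So $L_{\lambda}$ is even, and in particular integral. Hence $L_{\lambda}\subset L_{\lambda}^{*}$.

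\emph{Step 2 (index $5$).} Since $A_4^{*}/A_4\cong\mathbb{Z}_5$ and $\lambda\notin A_4^2$, the class of $\lambda$ in $(A_4^{*})^2/A_4^2\cong\mathbb{Z}_5^2$ has order exactly $5$. It follows that $[L_{\lambda}:A_4^2]=5$, and therefore
\[\det L_{\lambda}=\det(A_4^2)/5^2=25/25=1.\]
So $L_{\lambda}$ is unimodular. As a sanity check on which $\lambda$ occur, the norms in the coset $i\varepsilon_1+A_4$ are congruent to $i(5-i)/5$ modulo $2$. Consequently neither $x$ nor $y$ can lie in $A_4$: otherwise the other component would be a norm-$2$ vector lying either in $A_4$ or in a nontrivial coset, and both are impossible. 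By Lemma~\ref{square norm of short vectors of A_4^{*}}, one component then has norm $4/5$ and the other norm $6/5$. This check is not strictly needed for the determinant computation.

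\emph{Step 3 (conclusion).} Steps 1 and 2 show that $L_{\lambda}$ is an even unimodular lattice of rank $8$, so $L_{\lambda}\cong E_8$ by the uniqueness theorem (e.g.\ \cite{CS}).

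I do not expect a genuine obstacle. The only point needing care is Step 2, where one must justify that the index is exactly $5$ rather than $25$. This holds because $L_{\lambda}/A_4^2$ is cyclic, generated by the image of $\lambda$, and that image is nonzero in a group of exponent $5$.
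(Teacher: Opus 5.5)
Your proof is correct and follows essentially the same route as the paper. You show evenness from $(\lambda,\lambda)=2$ and $\lambda\in(A_4^2)^{*}$, get unimodularity by counting indices against $|(A_4^{*})^2/A_4^2|=25$, and conclude with the uniqueness of $E_8$. Your formula $\det L_{\lambda}=25/[L_{\lambda}:A_4^2]^2$ with $[L_{\lambda}:A_4^2]=5$ just spells out the index step the paper compresses into the chain $A_4^2\subsetneq L_{\lambda}\subset L_{\lambda}^{*}\subsetneq (A_4^{*})^2$.
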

\begin{proof}
Since  $(\lambda, \lambda) =2$,  we see that $L_{\lambda}$ is even.  Moreover, since $A_4^{2} \subsetneq L_{\lambda} \subset L_{\lambda}^{*} \subsetneq  (A_4^{*})^{2}$ and  $|(A_4^{*})^{2}/A_4^{2}| = 5^2$, we have $|L_{\lambda}^{*}/L_{\lambda}| =1$. Hence, the lattice $L_{\lambda}$ is a positive-definite even unimodular lattice of rank $8$, which implies $L_{\lambda} \cong E_8$ as lattices. 
\end{proof}

\begin{lemma}\label{transitivity for E_8}
Let $X = \{\{x_1, x_2, x_3, x_4\} \subset E_8(2) \mid  (x_1, x_2)=(x_2, x_3)=(x_3, x_4)=-1,  (x_i, x_j) =0\ \text{otherwise} \}$. Then the Weyl group $W(E_8)$ of the root system acts transitively on $X$. 
\end{lemma}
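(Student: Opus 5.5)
Elements of $X$ are exactly the ordered-up-to-reversal simple systems of $A_4$ subsystems inside $E_8(2)$; more precisely, an element $\{x_1,x_2,x_3,x_4\}\in X$ is a set of roots whose Gram matrix is the Cartan matrix of $A_4$. The plan is to reduce the statement to a transitivity statement about the $W(E_8)$-orbits of $A_4$ root subsystems, and then to handle the residual ambiguity of choosing a base inside a fixed subsystem.

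First, I will build the chain inductively, using the standard fact that $W(E_8)$ acts transitively on $E_8(2)$ (all roots of an irreducible simply-laced root system are conjugate). Given $\{x_1,\dots,x_4\}\in X$, I may therefore assume $x_1$ is a fixed root $\beta_1$. Next, the stabilizer of $\beta_1$ in $W(E_8)$ contains $W(E_7)$, where $E_7=\beta_1^{\perp}\cap E_8(2)$. The roots $y$ with $(\beta_1,y)=-1$ are exactly the $56$ roots forming a single orbit under $W(E_7)$ (the minuscule $56$-dimensional weights); so I can also fix $x_2=\beta_2$. Continuing, the pointwise stabilizer of $\beta_1,\beta_2$ contains $W(E_6)$ with $E_6=\{\beta_1,\beta_2\}^{\perp}$, and the roots $y$ with $(y,\beta_2)=-1$, $(y,\beta_1)=0$ should form a single $W(E_6)$-orbit (of size $27$, the minuscule orbit). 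Finally, with $\beta_1,\beta_2,\beta_3$ fixed, the stabilizer contains $W(A_4)$ or rather $W(D_5)$-type group $W(\{\beta_1,\beta_2,\beta_3\}^\perp)$, which is $W(A_4\oplus A_1)$... here I must check: the orthogonal complement of $A_3$ in $E_8$ is $D_5$, and the roots $y$ with $(y,\beta_3)=-1$, $(y,\beta_1)=(y,\beta_2)=0$ should form one orbit of size $16$ (a half-spin minuscule orbit of $D_5$).

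Each step amounts to the following verification: if $\Phi'\subset E_8(2)$ is the subsystem orthogonal to the already-fixed roots $\beta_1,\dots,\beta_j$, then the set $\{y\in E_8(2)\mid (y,\beta_j)=-1,(y,\beta_i)=0\ (i<j)\}$ is a single $W(\Phi')$-orbit. I will prove this by projecting such $y$ to $\mathbb{Q}\Phi'$: the projection $y'$ has fixed norm (determined by the Gram data), and the set of such projections is the set of vectors of minimal norm in a nontrivial coset of the dual of $\Phi'$-lattice. These are minuscule weights, on which the Weyl group is transitive. Alternatively, and as a fallback, the counts ($240$, $56$, $27$, $16$) can be verified directly, and since $|X|$ should equal $|W(E_8)|/|\mathrm{Stab}|$, one can compare with $|W(E_8)|/|W(A_3^{\perp})|$; also the reversal symmetry of the set $\{x_1,\dots,x_4\}$ is harmless since we are acting on unordered sets and any ordered chain realizes it.

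The main obstacle is the identification of the complements and of the orbit of candidate $y$'s as a single minuscule orbit at each step, especially confirming that the projection lands in exactly one coset class (so that no two $W(\Phi')$-orbits occur). I would try first to do this with explicit coordinates for $E_8$ (the $D_8^+$ model), choosing $\beta_1=e_1-e_2$, $\beta_2=e_2-e_3$, $\beta_3=e_3-e_4$, and listing the candidate $y$ by hand at each stage, checking transitivity via sign changes and permutations in the relevant $W(D_m)$ and the $W(E_\ast)$ elements.
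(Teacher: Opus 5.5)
Your argument is correct, but it takes a genuinely different route from the paper. The paper uses orbit--stabilizer. It counts $\#X=\tfrac12\cdot 240\cdot 24192=2903040$ by direct enumeration. It then fixes the specific set $S=\{-\theta,a_1,a_2,a_3\}$ and restricts $\operatorname{Stab}_{W(E_8)}(S)$ to $S^{\perp}\cong A_4$: the image has order $120$ or $240$ and the kernel has order at most $2$. One combination is excluded by inspecting $E_8(2)$, so $|\operatorname{Stab}_{W(E_8)}(S)|\le 240=|W(E_8)|/\#X$.

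You instead move an ordered chain onto a fixed one, one root at a time. At each step the candidates form a minuscule orbit of $W(E_7)$, $W(E_6)$, $W(D_5)$, of sizes $56,27,16$. Their product is exactly the paper's $24192$. Your route proves the stronger fact that $W(E_8)$ is transitive on \emph{ordered} $A_4$-chains, explains these numbers, and needs neither the enumeration nor the case check on $\operatorname{Ker}f$. The price is a few standard facts: $A_1^{\perp}=E_7$, $A_2^{\perp}=E_6$, $A_3^{\perp}=D_5$ inside $E_8$, and transitivity of the Weyl group on the minimal vectors of a coset of $Q^*/Q$.

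The obstacle you flag (all projections lying in one coset) matters for $E_6$ and $D_5$, where two nontrivial cosets share the same minimal norm. It closes without coordinates. The component of a candidate $y$ in the span of $\beta_1,\dots,\beta_j$ is a fixed vector $v_0$ determined by the inner products:
$v_0=-\tfrac12\beta_1$, $\ -\tfrac13\beta_1-\tfrac23\beta_2$, $\ -\tfrac14\beta_1-\tfrac12\beta_2-\tfrac34\beta_3$.
So two projections differ by an element of $E_8\cap\mathbb{Q}\Phi'$. This lattice is the root lattice of $\Phi'$: the determinants $2,3,4$ allow only the overlattice $\mathbb{Z}^5\supset D_5$, which is odd. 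The projections then have norm $2-(v_0,v_0)=\tfrac32,\tfrac43,\tfrac54$, which are precisely the minimal norms of the relevant cosets.

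Your counting fallback compares with the wrong group. The pointwise stabilizer of an ordered $A_4$-chain is $W(A_4)$, of order $120$, by Steinberg's theorem on isotropy groups. Then $|W(E_8)|/120=240\cdot56\cdot27\cdot16$, as needed. Dividing $|W(E_8)|$ by $|W(D_5)|$ only counts ordered $A_3$-chains.
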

\begin{proof}
Let $E_8=\{(x_1, \ldots, x_8) \in \mathbb{Z}^8 \cup (\mathbb{Z} +1/2)^8 \mid x_1+\cdots + x_8 \in 2\mathbb{Z}\}.$
First, we determine the number of the elements of $X$. By \cite[Section 10.4, Lemma C]{Hu}, since the group $W(E_8)$ acts transitively on $E_8(2)$, we fix $x_1=(1,1,0,0,0,0,0,0).$ By direct counting, we have $\#\{\{x_2, x_3, x_4\} \subset E_8(2) \mid  (x_1, x_2)=(x_2, x_3)=(x_3, x_4)=-1,  (x_i, x_j) =0\ \text{otherwise} \}=24192.$ Hence, we have $\#X=1/2 \cdot \#E_8(2) \cdot 24192=2903040.$ 

Next, we prove that there exists $S \in X$ such that $|\operatorname{Stab}_{W(E_8)}(S)| \le 240.$ 
Let
\[
\begin{aligned}
&a_1=(1,-1,0,0,0,0,0,0),\ a_2=(0,1,-1,0,0,0,0,0),\\
&a_3=(0,0,1,-1,0,0,0,0),\ a_4=(0,0,0,1,-1,0,0,0),\\
&a_5=(0,0,0,0,1,-1,0,0),\ a_6=(0,0,0,0,0,1,1,0),\\
&2a_7=(-1,-1,-1,-1,-1,-1,-1,-1),\ a_8=(0,0,0,0,0,1,-1,0),\\
&\theta = 2a_1+3a_2+4a_3+5a_4+6a_5+4a_6+3a_8+2a_7.
\end{aligned}
\]
Then $\Delta_{E_8(2)}=\{a_1,\ldots,a_8\}$ is a base of the root system $E_8(2)$ and $\theta$ is the highest root.
Let $S = \{-\theta, a_1,a_2,a_3\} \in X$ and $S^{\perp} = \{x \in E_8(2) \mid (x,y)=0\ \text{for any $y \in S$}\}.$ 
Since $\#S^{\perp}=20$ and $a_5,a_6,a_7,a_8 \in S^{\perp}$, $S^{\perp}$ is a root system of type $A_4$ with a base $\Delta_{S^{\perp}}=\{a_8, a_5, a_6, a_7\}.$ Then we have a group homomorphism $f: \operatorname{Stab}_{W(E_8)}(S) \rightarrow \operatorname{Aut}(S^{\perp})\ (\sigma \mapsto \sigma |_{S^{\perp}})$, where $\operatorname{Aut}(S^{\perp})$
is the automorphism group of the root system $S^{\perp}$. Since $\operatorname{Im}f$ includes the Weyl group $W(A_4) \cong S_5$ of the root system $S^{\perp}$ and $|\operatorname{Aut}(S^{\perp})|=240$, we have $|\operatorname{Im}f| = 120,240$. Moreover, we have $|\operatorname{Ker}f| \le 2$. By directly verifying the elements of $E_8(2)$, we see that the case $|\operatorname{Ker}f| =  2$ and $|\operatorname{Im}f| =240$ does not occur. Hence, we have
$|\operatorname{Stab}_{W(E_8)}(S)| \le 240.$ Since $\#\{g(S) \mid g \in W(E_8)\} \ge |W(E_8)|/240=2903040=\#X$, we obtain the desired transitivity. 
\end{proof}

\begin{proposition}\label{transitivity for p=5}
The group $\operatorname{Aut}(L_A(C))$ acts transitively on the set of $A_4$-frames of $L_A(C)$.
\begin{proof}
Let $F$ be an $A_4$-frame of $L_A(C)$ and $R=A_4^{k}$. Then $F$ is expressed as $F=F_1\cup \cdots \cup F_k$ (disjoint), where $F_i \perp F_j$ for $i \ne j$ and $F_i$ forms a root system of type $A_4$. To prove the transitivity, we prove that if $\#(F \cap R(2)) < \#R(2)$, then there exists $\psi \in \operatorname{Aut}(L_A(C))$ such that $ \#(F \cap R(2))<\#(\psi(F) \cap R(2))$. 

Let $\lambda \in F \setminus R(2)$ and we may assume $\lambda \in F_1$. By Lemmas \ref{square norm of short vectors of A_4^{*}} and \ref{transitivity for A_4} (\ref{the statemant for the automorphism group of A_4}), there exist $c \in C(2)$ and $\varphi \in \operatorname{Aut}(R)$ such that $\varphi(\lambda) = \varepsilon_1^{i_1}+2\varepsilon_1^{i_2}-\alpha_1^{i_2}$, where  $\operatorname{Supp} c = \{i_1,i_2\}$. 

Let $L_{\varphi(\lambda)} = \mathbb{Z}\varphi(\lambda) + A_{4,i_1}+A_{4,i_2}$,  where $A_{4,i}$ denotes the $i$-th lattice $A_{4}$ of $R=A_4^k$. Since $\varphi(F_1)$ forms a irreducible root system  of type $A_4$, we have
\begin{equation*}\label{condition for A_4} 
  \varphi(F_1) \subset L_{\varphi(\lambda)}. 
\end{equation*}  
 
By Lemma \ref{E_8}, we have $L_{\varphi(\lambda)} \cong E_8$ as lattices. Hence, by Lemma \ref{transitivity for E_8}, there exists $\sigma \in  W(L_{\varphi(\lambda)}) \subset \operatorname{Aut}(\varphi(L_A(C)))$ such that $\sigma\varphi(F_1)=A_{4,i_1}(2)$ and $\sigma(x) =x $ for $x \in R(2) \setminus \bigcup_{i \in \operatorname{Supp}c} A_{4,i}(2)$. Moreover, since $(\sigma \varphi(F) \setminus \sigma \varphi(F_1)) \perp \sigma \varphi(F_1)$, if $x \in (\varphi(F) \setminus \varphi(F_1)) \cap  (\bigcup_{i \in \operatorname{Supp}c} A_{4,i}(2))$, then $\sigma (x) \in  A_{4, i_2}(2) \subset R(2)$. 
Hence, we have 
\[\#((\sigma \varphi(F) \setminus \sigma \varphi(F_1)) \cap R(2)) \ge \#((\varphi(F) \setminus  \varphi(F_1)) \cap R(2)).\] Combining this with $\#(\sigma \varphi(F_1) \cap R(2)) > \#(\varphi(F_1) \cap R(2))$, we see $\#(\varphi^{-1} \sigma \varphi(F) \cap R(2)) > \#(F \cap R(2))$ for $\varphi^{-1} \sigma \varphi \in \operatorname{Aut}(L_A(C))$.
\end{proof}
\end{proposition}

\subsection{Conclusion of Section \ref{Iso prob for Const A}}
By Lemma~\ref{non-trivial root}, if $p \ge 7$, then $A_{p-1}^{k}(2)$ is the only $A_{p-1}$-frame of $L_A(C)$. Hence,  $\operatorname{Aut}(L_A(C))$ also acts transitively on the set of $A_{p-1}$-frames of $L_A(C)$ for $p \ge 7$. Combining this with Propositions  \ref{transitivity for p=3} and \ref{transitivity for p=5}, we have the following proposition:

\begin{proposition}\label{transitivity for p}
Let $p$ be an odd prime and  $C,D$ self-orthogonal codes of length $k$ over $\mathbb{F}_p$.
Then the group $\operatorname{Aut}(L_A(C))$ acts transitively on the set of $A_{p-1}$-frames of $L_A(C)$.
\end{proposition}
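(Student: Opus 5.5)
The plan is to split according to the value of the odd prime $p$, as in the preceding subsections.

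For $p \ge 7$, Lemma~\ref{non-trivial root} gives $L_A(C)(2)=R(2)$, where $R=A_{p-1}^k$. Any $A_{p-1}$-frame $F$ of $L_A(C)$ is a subset of $L_A(C)(2)$ forming a root system of type $A_{p-1}^m$ with $m(p-1)=\operatorname{rank}L_A(C)=k(p-1)$. Hence $m=k$ and $\#F=k\,p(p-1)=\#R(2)$. Together with $F\subset R(2)$, this forces $F=R(2)$. So the set of $A_{p-1}$-frames is the singleton $\{R(2)\}$, and transitivity holds trivially (the identity suffices).

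For $p=3$ and $p=5$, I invoke Propositions~\ref{transitivity for p=3} and \ref{transitivity for p=5} directly. Both are stated for an arbitrary self-orthogonal code $C$ of length $k$, which is exactly the hypothesis here. Their proofs run the same descent argument. Whenever $\#(F\cap R(2))<\#R(2)$, they produce $\psi\in\operatorname{Aut}(L_A(C))$ with $\#(\psi(F)\cap R(2))>\#(F\cap R(2))$. Since this count is bounded by $\#R(2)$, finitely many steps give $\psi'\in\operatorname{Aut}(L_A(C))$ with $R(2)\subset\psi'(F)$. A cardinality comparison as above then shows $\psi'(F)=R(2)$.

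Consequently, every frame lies in the $\operatorname{Aut}(L_A(C))$-orbit of $R(2)$, so the action on the set of frames is transitive. The code $D$ in the statement plays no role; it is only there so that Proposition~\ref{transitivity of frame implies isomorphic} can be applied afterwards.

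There is essentially no obstacle at this stage. The only point needing care is the bookkeeping just used: that equality of cardinalities (or inclusion of $R(2)$ in $\psi'(F)$) really forces $\psi'(F)=R(2)$. This follows because an $A_{p-1}$-frame of a rank-$k(p-1)$ lattice must have exactly $k$ components, by the rank condition in the definition.
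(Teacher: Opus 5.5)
Your proposal is correct and follows the paper's argument. For $p\ge 7$, Lemma~\ref{non-trivial root} makes $R(2)$ the only $A_{p-1}$-frame; your count $\#F=kp(p-1)=\#R(2)$ just spells out what the paper leaves implicit. For $p=3,5$, one invokes Propositions~\ref{transitivity for p=3} and~\ref{transitivity for p=5}, and your iteration of the descent step, with the final cardinality comparison, is what those propositions implicitly rely on.
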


Hence, by Proposition \ref{transitivity of frame implies isomorphic}, we have Theorem \ref{answer of isom prob for const A}.

\section{Classification of isomorphism classes of lattices from Construction B}
Let $p$ be a prime and $C, D$ self-orthogonal codes of length $k$ over $\mathbb{F}_p$. 
The goal in this section is to prove the following theorem:
\begin{theorem}\label{Answer of iso prob for Const B}
Let $p$ be an odd prime. Then $L_B(C) \cong L_B(D)$ as lattices if and only if $C \cong D$ as codes.
\end{theorem}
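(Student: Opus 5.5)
The easy direction is immediate: a code isomorphism $f\in\langle\sigma_i,\sigma\rangle$ lifts to an automorphism of $(A_{p-1}^*)^k$. Each $\sigma_i$ lifts to the diagram automorphism of $A_{p-1,i}$, composed with a Weyl element so that $\rho_\Delta$ (hence $\chi_\Delta$) is preserved. Permutations of coordinates preserve $\chi_\Delta$ as well. The lift therefore maps $L_A(C)$ to $L_A(D)$ and preserves $(\cdot,\chi_\Delta)$, so it maps $L_B(C)$ onto $L_B(D)$. For the converse, my plan is to reduce to Theorem \ref{answer of isom prob for const A}: show that any isomorphism $f:L_B(C)\to L_B(D)$, possibly after composing with an automorphism of $L_B(D)$, extends to an isomorphism $L_A(C)\to L_A(D)$. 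The natural tool is duality. By Lemma \ref{Structures of the dual lattices of Construction B}, $L_B(C)^*=L_A(C^\perp)+\mathbb{Z}\chi_\Delta$, and $f$ extends to an isometry $L_B(C)^*\to L_B(D)^*$.

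First I would recover the sublattice $R=A_{p-1}^k$ from $L_B(C)$. I would show that $L_B(C)(2)=R(2)$ in most cases. Vectors of $L_B(C)$ lie in $L_A(C)$, and for $p\ge 7$ Lemma \ref{non-trivial root} already gives $L_A(C)(2)=R(2)$; since $R(2)\subset L_B(C)$ (roots of $A_{p-1}$ have integral pairing with $\rho_\Delta/p$ only when appropriate, so this needs checking), $f$ maps the roots of $R$ to those of $R$. If not all of $R(2)$ lies in $L_B$, I would instead use the sublattice $L_B(\{\mathbf 0\})$ and identify it intrinsically as the lattice generated by short vectors, using the norm bounds of Lemmas \ref{minimal norm}, \ref{lower bound} and \ref{lower bound 2}. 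Once $f(L_B(\{\mathbf 0\}))=L_B(\{\mathbf 0\})$, $f$ extends to the common overlattice. The key observation is that $L_A(C)$ is recovered as $L_B(C)+R$, equivalently as the sublattice of $L_B(C)^*$ generated by $L_B(C)$ and the roots. Then $f(L_A(C))=L_A(D)$, and Theorem \ref{answer of isom prob for const A} gives $C\cong D$.

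For $p\ge 7$ I expect this to go through cleanly. The vectors $\chi_\Delta+L_A(C^\perp)$ have norm at least $k(p^2-1)/(12p)$ by Lemma \ref{lower bound 2}, which separates them from the coset $L_A(C^\perp)$ in norm considerations. Hence $f$ preserves $L_A(C^\perp)\cap L_B(C)^*$, and therefore its dual $L_A(C)$ modulo the appropriate index.

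The main obstacle is $p=3,5$, where $L_A(C)$ has extra roots. There $L_B(C)(2)$ may contain vectors outside $R$, and $f$ need not preserve $R$; this is the analogue of the $e_8^2$ versus $d_{16}^+$ phenomenon. My approach would mimic the paper's Kleinian-code idea. I would attach to $L_B(C)$ a code built from the roots of $L_B(C)$ and of $L_B(C)^*$ of norm $2$, $2/3$, $4/5$ and $6/5$ (Lemmas \ref{square norm of short vectors of A_2^{*}} and \ref{square norm of short vectors of A_4^{*}}). I would show that this code determines the frame $R(2)$ up to automorphisms of $L_B(D)$, using reflections as in Propositions \ref{transitivity for p=3} and \ref{transitivity for p=5}. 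Small lengths $k$ with exceptional root systems would be handled by listing the candidate codes and checking them by hand or by computer.
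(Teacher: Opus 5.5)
There is a genuine gap. It sits mainly where you locate the main obstacle ($p=3,5$), but the frame-recovery step is also wrong as stated.

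\textbf{The frame-recovery step.} No root of $R$ lies in $L_B(C)$. For $\alpha\in A_{p-1,i}(2)$ one has $(\alpha,\chi_\Delta)=\operatorname{ht}(\alpha)/p$ with $1\le|\operatorname{ht}(\alpha)|\le p-1$. So $L_B(C)(2)=R(2)$ never holds, and $L_B(\{\mathbf 0\})$ has minimum $4$. Nor is $L_B(\{\mathbf 0\})$ the span of the short vectors of $L_B(C)$: lifts of codewords give vectors of norm $2$ (for $p=3,5$) or $4$ (e.g.\ $p=7$, $c=(1,2,3)$) in $L_B(C)$ that lie outside it.

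Your parenthetical alternative, working in the dual, is the right fix, and it is the paper's argument. For $p\ge7$ one has $L_A(D^\perp)(2)=R(2)$. By Lemma~\ref{lower bound 2}, every coset $l\chi_\Delta+L_A(D^\perp)$ has minimum at least $k(p^2-1)/(12p)$. Once this exceeds $2$, $f(\alpha_1^1)\in R(2)$, and hence $f(L_A(C))=f(L_B(C)+\mathbb{Z}\alpha_1^1)=L_A(D)$.

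You still owe the lengths where this bound does not exclude roots in the $\chi_\Delta$-cosets: $k\le 3$ for $p=7$, $k\le 2$ for $p=11$, $k\le 9$ for $p=3$, and $k\le 5$ for $p=5$. The paper settles these by classifying self-orthogonal codes, by hand for $p=7,11$ and with MAGMA for $p=3,5$.

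\textbf{The cases $p=3,5$.} Here your plan is only a placeholder, and the tool you name does not apply. The reflections in Propositions~\ref{transitivity for p=3} and~\ref{transitivity for p=5} are reflections in roots of $L_A$. Such an $r_v$ with $v\notin L_B(D)$ does not preserve $L_B(D)$, since $(r_vx,\chi_\Delta)=(x,\chi_\Delta)-(x,v)(v,\chi_\Delta)$ with $(v,\chi_\Delta)\notin\mathbb{Z}$. For example, take $v=\alpha_1^1$ and $x=\alpha_1^1-\alpha_1^2$.

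More fundamentally, in the bad case $f(\alpha_1^1)\notin L_A(D)$ (a root of $L_A(D^\perp)$ lying over a word $u_1\in D^\perp\setminus D$). Then $f$ maps $L_A(C)=L_B(C)+\mathbb{Z}\alpha_1^1$ onto a different overlattice of $L_B(D)$, namely $L_B(D+\mathbb{F}_pu_1)$ in the paper's normalization. Any $g\in\operatorname{Aut}(L_B(D))$ that repaired this would restrict to an isometry $L_B(D+\mathbb{F}_pu_1)\to L_A(D)$. That is precisely an isometry of the type $L_B(C_A(K))\cong L_A(C_B(K))$, and nothing in your sketch produces one.

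The paper supplies two missing ingredients.
\begin{enumerate}
\item A counting criterion. Comparing $\#L_B(C)(2)=\#L_B(D)(2)$ with $\#(\alpha_1^1+L_B(C))(2)=\#(f(\alpha_1^1)+L_B(D))(2)$ gives $3k+9\#D(3)=9\#(u_1+D)(3)$, respectively $5k+10\#D(2)=10\#(v_1+D)(2)$. By Lemmas~\ref{another criterion for Const B in p=3} and~\ref{another criterion for Const B in p=5}, this forces $D\cong C_B(K)$ and $D+\mathbb{F}_3u_1\cong C_A(K)$, respectively $D\cong(d_5^m)_0$ and $D+\mathbb{F}_5v_1\cong d_5^m$.
\item Explicit isometries that do not preserve the $A_{p-1}$-frame. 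The maps $\widetilde\varphi$ and $\widetilde\psi$ of Lemmas~\ref{iso between Const A and B in p=3} and~\ref{iso between Const A and B in p=5} give $L_B(C_A(K))\cong L_A(C_B(K))$ and $L_B(d_5^m)\cong L_A((d_5^m)_0)$.
\end{enumerate}
Together these give $L_A(C)\cong L_B(D+\mathbb{F}_pu_1)\cong L_A(D)$, and only then does Theorem~\ref{answer of isom prob for const A} apply. These Kleinian-code analogues and isometries are the heart of the proof for $p=3,5$, and your proposal contains neither.
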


We first start with preparations to prove the above theorem.

\subsection{Preparations for the case $p=3$}
Let $d_3 = \langle (1, 1, 1) \rangle \subset \mathbb{F}_3^3$ and $K_3 = \langle (1, 2, 0) \rangle \subset \mathbb{F}_3^{3}$.  Define a map $\bar{\cdot}: d_3 \rightarrow \mathbb{F}_3$ by $a (1, 1, 1) \mapsto a$. Let $m$ be a positive integer and set $(d_3^{m})_{0} = \{(x_1, \ldots, x_m) \in d_3^{m} \mid \bar{x}_1+\cdots+\bar{x}_m = 0\}$. We identify the vector spaces  $d_3^{m}$ and $K_3^{m}$ with subsets of $\mathbb{F}_3^{3m}$.

Let $K$ be a code of $K_3^{m}$.   Then we define $C_A(K)$ and $C_B(K)$ called the codes constructed by \emph{Construction A} from $K$ and \emph{Construction B} from $K$, respectively, as follows:
\[C_A(K) = K+ d_3^{m},\ C_B(K) = K + (d_3^{m})_0.\]

The above constructions are analogues of Construction A and B from a Kleinian code in \cite[Subsection 1.5]{Shima}. By the definition of Construction A, we have the following proposition:
\begin{proposition}\label{condition that Const A is even}
Let $K \subset K_3^m$ be a code. Then $K$ is self-orthogonal if and only if $C_A(K)$ is self-orthogonal.   
\end{proposition}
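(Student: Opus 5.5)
Since both directions of the equivalence reduce to inner products in $\mathbb{F}_3^{3m}$, the plan is to compute the form on the generators of $C_A(K)=K+d_3^m$ and compare with the form on $K$. For $x\in\mathbb{F}_3^{3m}$ write $x=(x^{(1)},\ldots,x^{(m)})$ with $x^{(i)}\in\mathbb{F}_3^3$, so the standard form splits as $(x,y)=\sum_{i}(x^{(i)},y^{(i)})$. Every element of $K_3^m$ has blocks in $K_3=\langle(1,2,0)\rangle$, and every element of $d_3^m$ has blocks in $d_3=\langle(1,1,1)\rangle$.

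The blockwise computations needed are:
\begin{enumerate}
\item $((1,2,0),(1,1,1))=1+2=3=0$ in $\mathbb{F}_3$, so $K_3\perp d_3$ blockwise, and hence $K_3^m\perp d_3^m$.
\item $((1,1,1),(1,1,1))=3=0$, so $d_3\perp d_3$ blockwise, and hence $d_3^m$ is self-orthogonal.
\end{enumerate}

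For the direction ``$K$ self-orthogonal $\Rightarrow$ $C_A(K)$ self-orthogonal'', take $x=k+d$ and $x'=k'+d'$ with $k,k'\in K$ and $d,d'\in d_3^m$. Expanding bilinearly,
\[(x,x')=(k,k')+(k,d')+(d,k')+(d,d').\]
The first term vanishes because $K$ is self-orthogonal. The two cross terms vanish by item 1, since $K\subset K_3^m$. The last term vanishes by item 2. Hence $(x,x')=0$, so $C_A(K)\subset C_A(K)^{\perp}$.

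The converse is immediate: $K\subset C_A(K)$, and a subcode of a self-orthogonal code is self-orthogonal, because $(k,k')=0$ for all $k,k'\in C_A(K)$ in particular holds for $k,k'\in K$.

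There is no real obstacle here. The only point needing care is the bookkeeping of identifying $K_3^m$ and $d_3^m$ inside $\mathbb{F}_3^{3m}$ blockwise, so that the orthogonality relations can be checked on a single block of length $3$.
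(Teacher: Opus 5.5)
Your proof is correct and is essentially the routine verification the paper leaves implicit; it simply states the proposition follows from the definition of Construction A. The one direction uses $K_3^m\perp d_3^m$ and that $d_3^m$ is self-orthogonal, and the converse uses $K\subset C_A(K)$.
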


For $1 \le i \le m$, we define
\begin{equation}\label{def of u}  
u_i = (u_{i,1}, u_{i,2}, \ldots, u_{i,3m}) \in \mathbb{F}_3^{3m} 
\end{equation}
by $u_{i,3i-2}=u_{i,3i-1}=u_{i,3i}=1$ and $u_{i,j}=0$ otherwise. 
Note that $d_3^m=\langle u_i \mid 1 \le i \le m \rangle$ and $(d_3^m)_0=\langle u_i+2u_m \mid 1 \le i \le m\rangle$.
In order to give conditions for a code over $\mathbb{F}_3$ to be realized by Construction B, we prove the following lemma:
\begin{lemma}\label{coordinate permutation in p=3}
Let $C$ be a self-orthogonal code of length $3m$ over $\mathbb{F}_3$. Suppose that $(d_3^m)_0 \subset C$ and  $u_1 \in C^{\perp} \setminus C$. Then $u_i \in C^{\perp}$  for each $1 \le i \le m$ and there exist $y' \in C^{\perp}$ and a coordinate permutation $\sigma$ such that the following conditions hold:
\begin{enumerate}
\item $\sigma(u_i)=u_i$ for each $1 \le i \le m$.
\item $\sigma(y')=(0,0,1,0,0,1,\ldots,0,0,1).$ 
\end{enumerate}  
\end{lemma}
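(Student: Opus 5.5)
First, $u_i\in C^\perp$ for every $i$. Since $C$ is self-orthogonal, $C\subset C^\perp$, and $(d_3^m)_0\subset C$ gives $u_i-u_1=u_i+2u_1\in C\subset C^\perp$ (for $p=3$, $(d_3^m)_0$ also contains $u_i+2u_1$). Adding this to $u_1\in C^\perp$ gives $u_i\in C^\perp$.

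To build $y'$ we need a vector of $C^\perp$ that is not orthogonal to $u_1$. Because $C$ is self-orthogonal, $(u_1,c)=0$ for all $c\in C$, so $u_1\in C^\perp$ gives no such vector directly. Instead use $u_1\notin C=(C^\perp)^\perp$: there is some $y\in C^\perp$ with $(u_1,y)\neq 0$. For this $y$, write $s_i(y)=y_{3i-2}+y_{3i-1}+y_{3i}=(u_i,y)\in\mathbb{F}_3$. Pairing $y$ with $u_i+2u_1\in C$ gives $s_i(y)=s_1(y)$ for all $i$. Multiplying $y$ by $\pm1$, we may assume $s_i(y)=1$ for all $i$.

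Next, normalise each block to a single coordinate. A triple in $\mathbb{F}_3^3$ with coordinate sum $1$ is a permutation of one of $(1,0,0)$, $(2,2,0)$, $(1,1,2)$. We may add elements of $d_3^m$ to $y$: each $u_j\in C^\perp$, so $y+\sum_j a_ju_j\in C^\perp$, and adding a multiple of $u_j$ leaves $s_j$ unchanged because $(u_j,u_j)=3=0$. Adding $u_j$ to block $j$ sends $(2,2,0)\mapsto(0,0,1)$, and adding $2u_j$ sends $(1,1,2)\mapsto(0,0,1)$. So the block-by-block adjustment turns every block into a permutation of $(0,0,1)$; call the result $y'\in C^\perp$.

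Finally, let $\sigma$ be the product of permutations acting inside each block $\{3i-2,3i-1,3i\}$ that move the nonzero entry of block $i$ to position $3i$. Such a permutation preserves each block setwise, so it fixes every $u_i$, and it sends $y'$ to $(0,0,1,\ldots,0,0,1)$. The step that needs the most care is producing a $y$ with nonzero block sums; it relies on the duality $C=(C^\perp)^\perp$ together with $u_1\notin C$. The rest is case checking in $\mathbb{F}_3^3$.
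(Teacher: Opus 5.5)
Your proof is correct and takes essentially the same route as the paper. You pick $y\in C^\perp$ with $(y,u_1)\neq 0$ (the paper phrases this as $y\in C^\perp\setminus(C+\mathbb{F}_3u_1)^\perp$), show that all block sums $(y,u_i)$ are equal and nonzero, normalise each block by adding multiples of $u_i\in C^\perp$, and then permute within blocks. The only differences are that you get $(y,u_i)=(y,u_1)$ by pairing directly with $u_i+2u_1\in C$ rather than through the paper's argument that $(y,u_i+u_j)\neq 0$, and that your explicit list of sum-one triples spells out a step the paper leaves implicit.
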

\begin{proof}
 Let $\widetilde{C} = C+\mathbb{F}_3u_1$. Since $C$ is self-orthogonal and $u_1 \in C^{\perp}$, the code $\widetilde{C}$ is also self-orthogonal. Furthermore, since $u_1 \notin C$, we have $C \subsetneq \widetilde{C}$. 
 
Let $ y=(y_1,y_2,\ldots,y_m) \in C^{\perp} \setminus \widetilde{C}^{\perp}$, where $y_i \in \mathbb{F}_3^3$ for each $i$. Since $(d_3^m)_0 \subset C$, we have $(y, u_i+u_j+u_k)=0$ for any $i, j, k$. For a contradiction, suppose that
$(y, u_i+u_j) = 0$ for some $i,j$. Then we have $(y, u_1)=2(y,u_i+u_j+2u_i+2u_j+2u_1)=0$, which contradicts  $y \notin \widetilde{C}^{\perp}$. Hence, we see $(y, u_i+u_j) \neq 0$ for any $i,j$. 

From the condition $(y, u_i+u_j) \neq 0$ for any $i,j$, it follows
that  
\[(y_i, (1,1,1)) =1\ \text{for any $i$ or}\ (y_i, (1,1,1)) =2\ \text{for any $i$.}\]
 Since $-y \in C^{\perp} \setminus \widetilde{C}^{\perp}$, we may assume $(y_i, u_i)=1$ for any $i$. 
By this condition, for each $i$, there exists $a_i \in \mathbb{F}_3$ such that $y_i+a_i(1,1,1) \in \{(1,0,0),(0,1,0),(0,0,1)\}$. Note that  $u_i \in C^{\perp}$ for each $i$, since $2u_1+u_i \in (d_3^m)_0 \subset C \subset C^{\perp}$ and $u_1 \in C^{\perp}$. Considering the element $y'=y+a_1u_1+\cdots+a_mu_m \in C^{\perp}$, we have the desired result.
\end{proof}

The following proposition gives conditions for a code over $\mathbb{F}_3$ to be realized by Construction A and B:

\begin{proposition}\label{conditions for code: Const A and B}
Let $C$ be a self-orthogonal code of length $3m$ over $\mathbb{F}_3$. Then the following hold:
\begin{enumerate}
\item If $d_3^{m} \subset C$, then there exists a self-orthogonal code $K$ of $K_3^m$ such that $C \cong C_A(K)$ as codes.
\item \label{conditions for code: Const B} Suppose that $(d_3^m)_0 \subset C$ and $u_1 \in C^{\perp} \setminus C$. Then there exists a self-orthogonal code $K$ of $K_3^m$ such that $C \cong C_B(K)$ and $C+\mathbb{F}_3u_1 \cong C_A(K)$ as codes. 
\end{enumerate}
\end{proposition}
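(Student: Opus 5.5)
For (1), I decompose each $c\in C$ block-by-block modulo $d_3$. Since $d_3\subset\mathbb{F}_3^3$ has dimension one and $K_3=\langle(1,2,0)\rangle$ meets $d_3$ trivially, the sum $d_3+K_3$ is two-dimensional. It equals $d_3^{\perp}=\{x\mid x_1+x_2+x_3=0\}$ exactly when $(1,2,0)\in d_3^\perp$, which holds because $1+2=0$ in $\mathbb{F}_3$. Because $d_3^m\subset C$ and $C$ is self-orthogonal, we get $C\subset (d_3^m)^\perp=(d_3^\perp)^m=(K_3+d_3)^m=K_3^m\oplus d_3^m$. Let $P:K_3^m\oplus d_3^m\to K_3^m$ be the projection and set $K=P(C)$. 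Then $C=K+d_3^m=C_A(K)$, since $d_3^m\subset C$ and $c-P(c)\in d_3^m$. Self-orthogonality of $K$ follows from Proposition~\ref{condition that Const A is even}. So in (1) we even get equality, with no coordinate permutation needed.

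For (2), let $\widetilde C=C+\mathbb{F}_3u_1$. As in the proof of Lemma~\ref{coordinate permutation in p=3}, $\widetilde C$ is self-orthogonal. Since $u_1\in\widetilde C$ and $u_1+2u_i\in(d_3^m)_0\subset C$, we have $d_3^m\subset\widetilde C$. By part (1), $\widetilde C=C_A(K)$ with $K=P(\widetilde C)$ self-orthogonal, but I must also control $C$. For this I apply Lemma~\ref{coordinate permutation in p=3} to obtain $y'\in C^\perp$ and a coordinate permutation $\sigma$ fixing each $u_i$ with $\sigma(y')=(0,0,1,\dots,0,0,1)$. Replacing $C$ by $\sigma(C)$ preserves all hypotheses, because $\sigma$ fixes the $u_i$ and hence $(d_3^m)_0$ and $u_1$. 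So I may assume $y:=(0,0,1,\dots,0,0,1)\in C^\perp$.

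Next I note that $(y,u_1)=1\neq 0$, so $y\notin\widetilde C^\perp$, and $y^\perp\cap\widetilde C$ is a hyperplane of $\widetilde C$ containing $C$. Since $[\widetilde C:C]=3$, this gives $C=\widetilde C\cap y^\perp$. Now let $c=k+\sum a_iu_i$ with $k\in K\subset K_3^m$. Each block of $k$ is a multiple of $(1,2,0)$, whose third coordinate is zero, so $(k,y)=0$. Hence $(c,y)=\sum a_i$. It follows that $C=\{k+\sum a_iu_i\mid \sum a_i=0\}=K+(d_3^m)_0=C_B(K)$, and $\widetilde C=C_A(K)$ with the same $K$.

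The main obstacle is this last step: choosing the normalization so that the same $K$ works for both codes. This is exactly why the specific form of $y'$ from Lemma~\ref{coordinate permutation in p=3} is needed: its third coordinates are orthogonal to $K_3$. Once that is in hand, the remaining verifications are routine.
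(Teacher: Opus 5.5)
Your proof is correct and follows essentially the same route as the paper: part (1) is the same block decomposition $d_3^{\perp}=K_3\oplus d_3$ with $K$ the projection of $C$. In part (2) you use Lemma~\ref{coordinate permutation in p=3} exactly as the paper does, together with the observation that $(1,2,0)$ is orthogonal to $(0,0,1)$, and the only difference is cosmetic: you take $K$ from $\widetilde C$ via part (1) and identify $C=\widetilde C\cap y^{\perp}$ by an index count, whereas the paper projects $\sigma(C)$ directly and checks the $d_3^m$-component lies in $(d_3^m)_0$. Just make sure $K=P(\widetilde C)$ is computed after replacing $C$ by $\sigma(C)$, since $\sigma$ need not preserve $K_3^m$.
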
 

\begin{proof}
(1) Let $x = (x_1, x_2, \ldots, x_m) \in C$, where $x_i \in \mathbb{F}_3^{3}$ for each $i$. Since $d_3^{m} \subset C \subset C^{\perp}$, we have $(x_i, (1,1,1)) =0$ for $1 \le i \le m$. Since $\{(1,2,0), (1,1,1)\}$ is a basis of $\{y \in \mathbb{F}_3^3 \mid (y, (1,1,1))=0\}$, for each $i$, there exists only one $x_i' \in \langle (1,2,0) \rangle$ such that $x_i \in x_i' + \langle (1,1,1) \rangle$. 

Define a map $\pi: C \rightarrow \mathbb{F}_3^{3m}$ by $x \mapsto (x_1',\ldots, x_m')$ and set $K=\{\pi(x) \mid x \in C\}$.  Then $K$ is a code of $K_3^{m}$. Moreover, since $d_3^{m} \subset C$, we have $C_A(K) = C$. In particular, $K$ is self-orthogonal and $C \cong C_A(K)$ holds as codes.

\noindent (2) Let $x \in C$. By Lemma \ref{coordinate permutation in p=3}, there exists $y' \in C^{\perp}$ and a coordinate permutation $\sigma$ such that the conditions in Lemma \ref{coordinate permutation in p=3}  holds. Since $u_i \in \sigma(C^{\perp}) \subset \sigma(C)^{\perp}$ for $1 \le i \le m$, we can define $\pi: \sigma(C) \rightarrow \mathbb{F}_3^{3m}$ as in the above map  $\pi$.   

Since $(\sigma(x),\sigma(y'))=0$ and $(\pi(\sigma(x)),\sigma(y'))=0$, we have $\sigma(x) \in \pi(\sigma(x))+(d_3^m)_0$. Since $(d_3^m)_0 \subset \sigma(C)$, by the same discussion in (1), we see that $\sigma(C) = C_B(K)$ and $\sigma(C+\mathbb{F}_3u_1)=C_A(K)$, where $K=\{\pi(\sigma(x)) \mid x \in C\}$. Hence, $C \cong C_B(K)$ and $C+\mathbb{F}_3u_1 \cong C_A(K)$ hold as codes.
\end{proof}

By using the above proposition,  we have another criterion for a code over $\mathbb{F}_3$ to be realized by Construction A and B. 
\begin{lemma}\label{another criterion for Const B in p=3}
Let $C$ be a self-orthogonal code of length $k$ over $\mathbb{F}_3$. Suppose that $C$ satisfies the following condition:
\begin{equation}\label{condition for another criterion p=3} 
u_1 \in C^{\perp}\setminus C\ \text{and}\  k/3+\#C(3) \le \#(u_1+C)(3). 
\end{equation}
Then we have $ 3 \mid k $ and there exists a self-orthogonal code $K$ of $K_3^{k/3}$ such that $C \cong C_B(K)$ and $C+\mathbb{F}_3u_1 \cong C_A(K)$ as codes.
\end{lemma}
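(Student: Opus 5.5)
The plan is to use the hypothesis $k/3+\#C(3)\le \#(u_1+C)(3)$ to find enough weight-$3$ words in the coset $u_1+C$. These words should have pairwise disjoint supports covering all $k$ coordinates. After sign changes and a coordinate permutation, this puts the code in the situation of Proposition~\ref{conditions for code: Const A and B} (\ref{conditions for code: Const B}). Throughout, $u_1$ is the vector with $1$ in the first three coordinates and $0$ elsewhere.

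First I record the basic facts. Every $x\in u_1+C$ lies in $C^{\perp}$, since $u_1\in C^{\perp}$ and $C\subset C^{\perp}$. For $x,y\in u_1+C$ we have $x-y\in C$ and $(x,y)=(u_1,u_1)=3=0$ in $\mathbb{F}_3$. Also every nonzero word of the self-orthogonal code $C$ has weight divisible by $3$, so $C$ has no words of weight $1,2,4,5$.

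Now let $x,y\in(u_1+C)(3)$ with $x\neq \pm y$. I compare their supports case by case.
\begin{enumerate}
\item An overlap of size $1$ is impossible, since then $(x,y)=\pm1\neq0$.
\item If the supports coincide, then $x-y$ is a nonzero word of $C$ supported on $3$ coordinates, so $x-y\in C(3)$.
\item If the overlap has size $2$, then $(x,y)=0$ forces $x$ and $y$ to agree on one common coordinate and disagree on the other. Then $x-y$ has weight exactly $3$ and lies in $C(3)$.
\end{enumerate}
So every non-disjoint, non-identical configuration produces words of $C(3)$.

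The counting step goes as follows. Fix $x_0\in(u_1+C)(3)$. The map $x\mapsto x-x_0$ sends $(u_1+C)(3)\setminus\{x_0\}$ injectively into $C$. I will try to show the following: every $x$ whose support is not one of a family of pairwise disjoint triples $T_1,\dots,T_r$, each carrying exactly one word of $u_1+C$, is matched injectively with an element of $C(3)$. Given that, the hypothesis yields $r\ge k/3$. Since the $T_j$ are disjoint subsets of $\{1,\dots,k\}$, we also get $r\le k/3$, hence $3\mid k$ and $\{1,\dots,k\}=T_1\sqcup\dots\sqcup T_{k/3}$.

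This bookkeeping is the main obstacle. The issue is to make the injection into $C(3)$ honest: several bad words may produce the same difference, and for $x\neq\pm y$ we have $x+y\in 2u_1+C$ rather than $C$. My first attempt is a greedy argument. Choose a maximal set of pairwise disjoint supports of words in $(u_1+C)(3)$. Then charge each remaining word $x$ to $x-x_j\in C(3)$, where $x_j$ is a chosen word whose support meets $\operatorname{supp}x$. I then check injectivity by comparing supports of the differences. If that fails, I would fall back on a double count of the pairs $(x,c)$ with $x\in(u_1+C)(3)$, $c\in C(3)$ and $x+c\in(u_1+C)(3)$.

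Once the partition is obtained, I finish as follows.
\begin{enumerate}
\item Since $u_1$ itself is a weight-$3$ word of the coset, its support $\{1,2,3\}$ is one of the $T_j$.
\item Apply a coordinate permutation making $T_i=\{3i-2,3i-1,3i\}$ with $T_1=\{1,2,3\}$.
\item Apply the sign changes $\sigma_j$ so that the word on $T_i$ becomes $u_i$; this leaves $u_1$ unchanged.
\item Then $u_i-u_1\in C$ for all $i$, which gives $(d_3^{k/3})_0\subset C$, and still $u_1\in C^{\perp}\setminus C$.
\end{enumerate}
Proposition~\ref{conditions for code: Const A and B} (\ref{conditions for code: Const B}) then provides a self-orthogonal $K\subset K_3^{k/3}$ with $C\cong C_B(K)$ and $C+\mathbb{F}_3u_1\cong C_A(K)$.
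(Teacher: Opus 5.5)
\paragraph{The gap.} Your reduction and your final assembly via Proposition~\ref{conditions for code: Const A and B}~(\ref{conditions for code: Const B}) are sound. However, the step that carries the whole lemma is only announced, not proved: you need an injection of the words of $(u_1+C)(3)$ outside the family $x_1,\dots,x_r$ into $C(3)$, and you write ``I will try to show'' and ``if that fails, I would fall back on a double count''. Your case~2 allows two distinct coset words with the same support. So nothing you have proved guarantees that each $T_j$ carries exactly one coset word, or that two leftover words are not charged to the same element of $C(3)$. The inequality $\#(u_1+C)(3)\le r+\#C(3)$, which is what converts the hypothesis into $r\ge k/3$, is therefore unsupported.

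The missing observation is that your case~2 is vacuous. If $x,y\in(u_1+C)(3)$ have the same support, then $(x,y)=0$ is a sum of three terms $\pm1$, which forces all three to be equal. Hence $y=\pm x$. Moreover $-x\in 2u_1+C\neq u_1+C$ because $u_1\notin C$, so $y=x$. Consequently two distinct weight-$3$ coset words overlap in $0$ or $2$ coordinates.

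\paragraph{How to close it.} With this observation your greedy version goes through.
\begin{enumerate}
\item Take a maximal family $x_1,\dots,x_r\in(u_1+C)(3)$ with pairwise disjoint supports $T_1,\dots,T_r$. You may start it with $x_1=u_1$, which also settles your step~1 directly.
\item By maximality, any other coset word $x$ of weight $3$ meets some $T_j$, and by the observation it does so in exactly two coordinates. It therefore meets no other $T_{j'}$, and its third coordinate lies outside $\bigcup_i T_i$.
\item The word $x-x_j\in C(3)$ has exactly two coordinates in $T_j$ and one outside $\bigcup_i T_i$. So $x-x_j$ determines $j$, and hence determines $x=(x-x_j)+x_j$. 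This is the required injection.
\item The count now gives $k/3\le r$, and disjointness gives $3r\le k$, exactly as you planned.
\end{enumerate}

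\paragraph{Comparison with the paper.} The paper runs the same argument incrementally rather than with a global maximal family. Having normalised the words already found to $u_1,\dots,u_r$, it uses $(x,u_i)=0$ to show that a weight-$3$ coset word meeting the first $3r$ coordinates is either some $u_i$ or has two coordinates in one block and one in the tail. It sends the latter to $x-u_i\in C(3)$. Counting then produces a coset word supported entirely in the tail, which it normalises to $u_{r+1}$.
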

\begin{proof}
In order to apply Proposition \ref{conditions for code: Const A and B} (\ref{conditions for code: Const B}), we show that
$3 \mid k$ and there exists a self-orthogonal code $C' \cong C$ which satisfies (\ref{condition for another criterion p=3}) and $\{u_i \mid 1 \le i \le k/3\} \subset u_1+C'$. To do this, we prove that if $\{u_i \mid 1 \le i \le r\} \subset u_1 +C$ for $r < k/3$, then there exists a self-orthogonal code $C''\cong C$ which satisfies  (\ref{condition for another criterion p=3}) and  $\{u_i \mid 1 \le i \le r+1\} \subset u_1 +C''$. 

Let $\{u_i \mid 1 \le i \le r\} \subset u_1 +C$ for  $r < k/3$ and 
 $x=(x_1,\ldots, x_r,y) \in (u_1+C)(3)$, where $x_i \in \mathbb{F}_3^3$ for each $i$ and $y \in \mathbb{F}_3^{k-3r}$. 
Since $u_i - x \in C$ and $u_i \in C^{\perp}$ for $1 \le i \le r$, we have $(u_i-x, u_i)=0$ for $1 \le i \le r$, which implies that $(x_i, (1,1,1))=0$ for $1 \le i \le r$. Hence, if $x_i \neq 0$ for some $1 \le i \le r$,  then 
$x = u_i$ or
$x_i \in \langle (1,1,1) \rangle^{\perp} (2)$ and $\#\operatorname{Supp}y=1$. 

Let $X_r$ be the set of elements $x=(x_1,\ldots, x_r,y) \in (u_1+C)(3)$ such that  there exists $i \in \{1,2,\ldots r\}$ such that 
\begin{equation}\label{conditions for another criterion for Const B in p=3}
x_i \in \langle (1,1,1) \rangle^{\perp} (2)\ \text{and}\ \#\operatorname{Supp}y=1.
\end{equation} 
Then we have $\#X_r \le \#C(3)$. Indeed, we can construct an injection $f: X_r \rightarrow C(3)$ as follows.  
For $x=(x_1,\ldots, x_r,y) \in X_r$, there exists only one  $i \in \{1,2,\ldots r\}$ such that (\ref{conditions for another criterion for Const B in p=3}) holds. We define an injection
 $f: X_r \rightarrow C(3)$ by $x \mapsto x-u_{i}$. 
 
Let $U_r=\{u_i \mid 1 \le i \le r\}$. Then we have the following: 
\begin{align*}
\#\{(0,\ldots,0, y) \in (u_1+C)(3) \mid y \in \mathbb{F}_3^{k-3r}\}
&=\#(u_1+C)(3)-\#U_r-\#X_r \\
&>\#(u_1+C)(3)-k/3-\#C(3) \\ 
&\ge 0.
\end{align*}
Hence, there exists a self-orthogonal code $C''\cong C$ which satisfies  (\ref{condition for another criterion p=3}) and  $\{u_i \mid 1 \le i \le r+1\} \subset u_1 +C''$.  This implies $3 \mid k$. Moreover, by Proposition \ref{conditions for code: Const A and B}, we have  $C \cong C_B(K)$ and $C+\mathbb{F}_3u_1 \cong C_A(K)$ as codes. 
\end{proof}

Let us establish an isomorphism between the lattices $L_A(C_B(K))$ and $L_B(C_A(K))$, where $K\subset K_3^m$ is a self-orthogonal code. 
Let $\alpha_i\ (1\le i \le 2)$ and $\varepsilon_j\ (1 \le j \le 3)$ be as in Subsection \ref{Def of Const A and B}. 
Note that $\varepsilon_1=(1/3)(2\alpha_1+\alpha_2)$ and $\varepsilon_i =\varepsilon_1-\alpha_1-\cdots-\alpha_{i-1}$ for $2 \le i \le 3$.
We define a linear map $\varphi:\bigoplus_{i=1}^3 \langle \alpha_1, \alpha_2 \rangle_{\mathbb{R}} \rightarrow \bigoplus_{i=1}^3  \langle \alpha_1, \alpha_2 \rangle_{\mathbb{R}}$ by 
\[
\begin{aligned}
&\varphi((\alpha_1,0,0))=(\varepsilon_3, \varepsilon_2,\varepsilon_1),
\varphi((\alpha_2,0,0))=(\varepsilon_2, \varepsilon_1, \varepsilon_3),\\
&\varphi((0,\alpha_1,0))=(\varepsilon_1, \varepsilon_2, \varepsilon_3),
\varphi((0,\alpha_2,0))=(\varepsilon_3, \varepsilon_1, \varepsilon_2),\\ 
&\varphi((0,0,\alpha_1))=(\varepsilon_1, \varepsilon_1, \varepsilon_1),
\varphi((0,0,\alpha_2))=(\varepsilon_3, \varepsilon_3, \varepsilon_3).
\end{aligned}
\]
By using (\ref{varepsilon:inner product}), we can directly check that $\varphi$ preserves the inner product $(\cdot,\cdot).$
Let $m$ be a positive integer.  We define a map $\widetilde{\varphi}: \bigoplus_{j=1}^m(\bigoplus_{i=1}^3 \langle \alpha_1, \alpha_2 \rangle_{\mathbb{R}}) \rightarrow \bigoplus_{j=1}^m(\bigoplus_{i=1}^3 \langle \alpha_1, \alpha_2 \rangle_{\mathbb{R}})$ by 
\[(x_1, \ldots, x_m) \mapsto (\varphi(x_1), \ldots, \varphi(x_m)).\] 
Let $\alpha_j^{i}$ and $\varepsilon_j^{i}$ denotes the element $\alpha_j$ and $\varepsilon_j$ in the $i$-th coordinate, respectively. 

\begin{lemma}\label{Const B p=3 constA} 
Let $K\subset K_3^m$ be a self-orthogonal code.  Set $C = C_A(K)$.
Then we have $\widetilde{\varphi}(\alpha_l^i) \in L_B(C)$  for $ 1 \le i \le 3m$ and $1 \le l \le 2$.
\end{lemma}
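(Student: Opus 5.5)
Since $\widetilde{\varphi}$ acts blockwise on triples of coordinates, it suffices to treat a single block: the image of $\alpha_l^i$ is supported on the block $\{3j-2,3j-1,3j\}$ containing $i$. I will check two things for each of the six vectors $\varphi((\alpha_1,0,0)),\dots,\varphi((0,0,\alpha_2))$ placed in block $j$: membership in $L_A(C)$, and integrality of the inner product with $\chi_{\Delta}$. By the definition of $L_B(C)$ these two conditions are exactly what is required.

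\textbf{Step 1 (membership in $L_A(C)$).} Recall $\varepsilon_i\equiv\varepsilon_1 \bmod A_2$ for all $i$, since $\varepsilon_i=\varepsilon_1-\alpha_1-\cdots-\alpha_{i-1}$. Hence each of the six images is congruent modulo $A_2^3$ to $(\varepsilon_1,\varepsilon_1,\varepsilon_1)$ in block $j$ and $0$ elsewhere. Under $\pi$ this vector maps to $u_j\in d_3^m\subset C_A(K)=C$, so all six images lie in $\pi^{-1}(C)=L_A(C)$.

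\textbf{Step 2 (integrality against $\chi_\Delta$).} For $p=3$ we have $\Delta_+=\{\alpha_1,\alpha_2,\alpha_1+\alpha_2\}$, so $\rho_\Delta=\alpha_1+\alpha_2$ and $\chi_\Delta=\tfrac13(\rho_\Delta,\dots,\rho_\Delta)$. Using $(\varepsilon_i,\alpha_l)=\delta_{il}-\delta_{i,l+1}$, we get $(\varepsilon_1,\rho_\Delta)=1$, $(\varepsilon_2,\rho_\Delta)=0$ and $(\varepsilon_3,\rho_\Delta)=-1$. Therefore, for a vector $(\varepsilon_a,\varepsilon_b,\varepsilon_c)$ in block $j$, the inner product with $\chi_\Delta$ is $\tfrac13(s_a+s_b+s_c)$, where $s_1=1$, $s_2=0$, $s_3=-1$. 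The six triples are:
\begin{itemize}
\item $(3,2,1)$ and $(2,1,3)$, each with sum $0$;
\item $(1,2,3)$ and $(3,1,2)$, each with sum $0$;
\item $(1,1,1)$, with sum $3$;
\item $(3,3,3)$, with sum $-3$.
\end{itemize}
All six sums are divisible by $3$, so every image has integral inner product with $\chi_\Delta$.

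Combining Steps 1 and 2 shows $\widetilde{\varphi}(\alpha_l^i)\in L_B(C)$ for all $1\le i\le 3m$ and $1\le l\le 2$. The only real difficulty is bookkeeping. One must fix the convention for the index $i$ in $\alpha_l^i$ (it runs over the $3m$ coordinates, grouped into $m$ blocks of three on which $\widetilde{\varphi}$ acts). One must also correctly compute $\rho_\Delta$ and the values $(\varepsilon_i,\rho_\Delta)$. Note that self-orthogonality of $K$ is not needed here beyond ensuring $C$ is a self-orthogonal code containing $d_3^m$.
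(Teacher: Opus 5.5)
Your proof is correct and is essentially the paper's argument: both check that each image is congruent to $(\varepsilon_1,\varepsilon_1,\varepsilon_1)$ in its block modulo $A_2^3$, so it lies in $L_A(C)$ because $u_j\in d_3^m\subset C$, and both check that its pairing with $\chi_{\Delta}$ is integral. The only difference is bookkeeping: the paper writes each image as $\varepsilon_1^i+\varepsilon_1^{i+1}+\varepsilon_1^{i+2}$ plus a sum of simple roots whose number is divisible by $3$, whereas you read the pairing off directly from $(\varepsilon_1,\rho_{\Delta})=1$, $(\varepsilon_2,\rho_{\Delta})=0$, $(\varepsilon_3,\rho_{\Delta})=-1$, which is slightly cleaner.
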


\begin{proof}
Let $i \in \{3k+1 \mid 0 \le k \le m-1\}$. By the definition of $\widetilde{\varphi}$ and direct calculations, we have 
\[
\begin{aligned}
&\widetilde{\varphi}(\alpha_1^i)=(\varepsilon_1^i+\varepsilon_1^{i+1}+\varepsilon_1^{i+2})+(-\alpha_1^i-\alpha_2^i-\alpha_1^{i+1}),\\
&\widetilde{\varphi}(\alpha_2^i)=(\varepsilon_1^i+\varepsilon_1^{i+1}+\varepsilon_1^{i+2})+(-\alpha_1^i-\alpha_1^{i+2}-\alpha_2^{i+2}),\\
&\widetilde{\varphi}(\alpha_1^{i+1})=(\varepsilon_1^i+\varepsilon_1^{i+1}+\varepsilon_1^{i+2})+(-\alpha_1^{i+1}-\alpha_1^{i+2}-\alpha_2^{i+2}),\\
&\widetilde{\varphi}(\alpha_2^{i+1})=(\varepsilon_1^i+\varepsilon_1^{i+1}+\varepsilon_1^{i+2})+(-\alpha_1^i-\alpha_2^i-\alpha_1^{i+2}),\\ 
&\widetilde{\varphi}(\alpha_1^{i+2})=(\varepsilon_1^i+\varepsilon_1^{i+1}+\varepsilon_1^{i+2}),\\
&\widetilde{\varphi}(\alpha_2^{i+2})=(\varepsilon_1^i+\varepsilon_1^{i+1}+\varepsilon_1^{i+2})+(-\alpha_1^i-\alpha_2^i-\alpha_1^{i+1}-\alpha_2^{i+1}-\alpha_1^{i+2}-\alpha_2^{i+2}).
\end{aligned}
\]
Note that $\varepsilon_1^i+\varepsilon_1^{i+1}+\varepsilon_1^{i+2} \in L_B(C)$. Since $(\alpha_l, \rho_{\Delta})=1$ for $1 \le l \le 2$ and $\rho_{\Delta}$ as in Subsection \ref{Def of Const A and B}, we have 
\[\widetilde{\varphi}(\alpha_l^{i+j})-(\varepsilon_1^i+\varepsilon_1^{i+1}+\varepsilon_1^{i+2}) \in L_B(C)\]
for $0 \le j \le 2$ and $1 \le l \le 2.$ Hence, we have $\widetilde{\varphi}(\alpha_l^i) \in L_B(C)$  for $ 1 \le i \le 3m$ and $1 \le l \le 2$.
\end{proof}

\begin{lemma}\label{const B  p=3 cal2}
Let $K\subset K_3^m$ be a self-orthogonal code.  Set $C_0 = C_B(K)$.
Then we have
$\alpha_1^{i+j}+2\alpha_2^{i+2}, \alpha_2^{i+j}+2\alpha_2^{i+2} \in \widetilde{\varphi}(L_A(C_0))$
for $i \in \{3k+1 \mid 0 \le k \le m-1\}$ and $0 \le j \le 2$.
\end{lemma}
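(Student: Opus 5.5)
The plan is to work one block at a time and pull the vectors back through $\widetilde{\varphi}$. Since $\widetilde{\varphi}$ acts blockwise by $\varphi$ on each triple of coordinates $(i,i+1,i+2)$ with $i=3k+1$, and $A_2^{3m}\subset L_A(C_0)$ for any code $C_0$, it suffices to prove one thing. For each target vector
\[
v\in\{\alpha_1^{i+j}+2\alpha_2^{i+2},\ \alpha_2^{i+j}+2\alpha_2^{i+2}\mid 0\le j\le 2\},
\]
viewed as a vector in the $i$-th block $\bigoplus_{t=1}^3\langle\alpha_1,\alpha_2\rangle_{\mathbb{R}}$, we want to show that $\varphi^{-1}(v)\in A_2^3$. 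In fact I expect the preimage to land in the root lattice $A_2^3$ itself, not merely in $\pi^{-1}(C_0)$. This is because the target vectors are supported on a single block, while $K$ is arbitrary, so no nonzero codeword class can be guaranteed to lie in $C_0=C_B(K)$. The summand $2\alpha_2^{i+2}$ is presumably there exactly to cancel the residual class. If instead the class came out as a multiple of $(1,1,1)$, I would have to revisit this, since a single-block vector $a(1,1,1)$ with $a\neq 0$ is not in $(d_3^m)_0$.

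To compute $\varphi^{-1}(v)$, I will use that $\varphi$ preserves the inner product, as noted just before Lemma~\ref{Const B p=3 constA}. Hence $\varphi^{-1}$ is the adjoint of $\varphi$, and for each basis vector $\alpha_l^{t}$ of the block ($l=1,2$, $t=1,2,3$) we have
\[
(\varphi^{-1}(v),\alpha_l^{t})=(v,\varphi(\alpha_l^{t})).
\]
The six vectors $\varphi(\alpha_l^t)$ are listed explicitly as triples of $\varepsilon$'s. The inner products $(\alpha_1,\varepsilon_s)$ and $(\alpha_2,\varepsilon_s)$ are $\pm1$ or $0$ according to whether $\alpha_l=e_l-e_{l+1}$ meets $\varepsilon_s$ with coefficient structure $\varepsilon_s\equiv e_s$ modulo the all-ones direction. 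So each of the six numbers $(v,\varphi(\alpha_l^t))$ is a small integer, read off directly.

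The integrality of these six inner products already shows $\varphi^{-1}(v)\in(A_2^*)^3$. To locate its class in $(A_2^*/A_2)^3\cong\mathbb{F}_3^3$, I use that $\varepsilon_1$ pairs to $(1,0)$ with $(\alpha_1,\alpha_2)$. Thus the $t$-th coordinate of $\varphi^{-1}(v)$ equals $a_t\varepsilon_1+\beta_t$ with $\beta_t\in A_2$, where
\[
a_t\equiv(\varphi^{-1}(v),\alpha_1^t)+2(\varphi^{-1}(v),\alpha_2^t)\pmod 3
\]
(the pairing vector of $2\varepsilon_1\equiv\varepsilon_3$ is $(2,1)\equiv(-1,1)$, consistent with this). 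The task therefore reduces to checking $a_1=a_2=a_3=0$ for each of the six target vectors. Alternatively, I can express $\varphi^{-1}(v)$ explicitly as an integral combination of the $\alpha_l^t$ by solving the $6\times6$ Gram system. This would also give the explicit preimages, matching the style of the proof of Lemma~\ref{Const B p=3 constA}.

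The main obstacle is purely the bookkeeping of these six cases, three values of $j$ times two values of $l$. To cut it down, I would first treat $v=\alpha_l^{i+2}+2\alpha_2^{i+2}$ directly. Then I would handle the remaining cases via the differences $\alpha_l^{i+j}-\alpha_{l'}^{i+j'}$: by linearity, it suffices to show that $\varphi^{-1}$ of such differences lies in $A_2^3$. Several of these differences are immediate from the displayed formulas, e.g.\ $\varphi((\alpha_1,0,0))-\varphi((0,\alpha_1,0))=(\varepsilon_3-\varepsilon_1,0,\varepsilon_1-\varepsilon_3)$. Sanity checks along the way are that every computed class is zero and that the norms agree, $(v,v)=(\varphi^{-1}(v),\varphi^{-1}(v))$.
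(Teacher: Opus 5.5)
Your plan is correct, but it reaches the result by a different route from the paper.

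The paper works in the forward direction. It takes the images under $\widetilde{\varphi}$ of $\alpha_l^{i+j}-\alpha_1^{i+2}$ and of $3\alpha_1^{i+2}$. These are vectors of $A_2^{3m}$ spanning the index-$3$ sublattice $\{\sum c_{l,t}\alpha_l^t : \sum c_{l,t}\equiv 0 \pmod 3\}$ of the block. The paper writes these images in the $\alpha$-basis and reduces the resulting $6\times 6$ integer matrix to Hermite normal form. The rows $e_k+2e_6$ and $3e_6$ then exhibit all six target vectors at once. They also show that $\varphi$ maps this index-$3$ sublattice onto itself.

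You instead compute $\varphi^{-1}(v)$ through the adjoint and test membership in $A_2^3$ via the discriminant class $a_t\equiv n_1+2n_2 \pmod 3$. This needs no integer row reduction, gives explicit preimages, and explains the summand $2\alpha_2^{i+2}$. Write $\alpha_l^t$ ($t=1,2,3$) for the roots of one block. Every $\varphi^{-1}(\alpha_l^t)$ has class $(1,1,1)$ in $(A_2^*/A_2)^3$, so $\varphi^{-1}(\alpha_l^t+2\alpha_2^3)$ has class $0$. For example, $\varphi^{-1}(\alpha_1^1+2\alpha_2^3)=-\alpha_1^1-2\alpha_2^1+\alpha_2^2-\alpha_2^3$. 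The check even reduces to three vectors, because the class of the $t'$-th block of $\varphi^{-1}(x)$ is $(x,\varphi(\alpha_1^{t'}-\alpha_2^{t'}))\bmod 3$. Both arguments use $C_0$ only through $A_2^{3m}\subset L_A(C_0)$.

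There are two slips to correct.

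\begin{enumerate}
\item In your parenthetical, $\varepsilon_3\equiv\varepsilon_1$ modulo $A_2$, not $2\varepsilon_1$. Also, $2\varepsilon_1$ has pairing vector $(2,0)$; the pairing vector $(-1,1)$ belongs to $\varepsilon_2$. Your formula for $a_t$ is nonetheless right.
\item More substantively, your sample computation $\varphi((\alpha_1,0,0))-\varphi((0,\alpha_1,0))=(\varepsilon_3-\varepsilon_1,0,\varepsilon_1-\varepsilon_3)$ is a forward image. It shows $\varphi(\alpha_1^1-\alpha_1^2)\in A_2^3$, which is the kind of input the paper's argument uses. It does not show $\varphi^{-1}(\alpha_1^1-\alpha_1^2)\in A_2^3$, which is what your reduction needs. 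That fact is true, but it has to come from the adjoint computation (all $\varphi^{-1}(\alpha_l^t)$ lie in the same class), not from reading the displayed formulas directly.
\end{enumerate}
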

\begin{proof}
Let $i \in \{3k+1 \mid 0 \le k \le m-1\}$. The following are elements of $\widetilde{\varphi}(L_A(C_0))$: 
\[
\begin{aligned}
&\widetilde{\varphi}(\alpha_1^i)-\widetilde{\varphi}(\alpha_1^{i+2})=-\alpha_1^i-\alpha_2^{i}-\alpha_1^{i+1},\\ 
&\widetilde{\varphi}(\alpha_2^i)-\widetilde{\varphi}(\alpha_1^{i+2})=-\alpha_1^i-\alpha_1^{i+2}-\alpha_2^{i+2},\\
&\widetilde{\varphi}(\alpha_1^{i+1})-\widetilde{\varphi}(\alpha_1^{i+2})=-\alpha_1^{i+1}-\alpha_1^{i+2}-\alpha_2^{i+2},\\ 
&\widetilde{\varphi}(\alpha_2^{i+1})-\widetilde{\varphi}(\alpha_1^{i+2})=-\alpha_1^i-\alpha_2^{i}-\alpha_1^{i+2},\\
&3\widetilde{\varphi}(\alpha_1^{i+2})=2\alpha_1^i+\alpha_2^i+2\alpha_1^{i+1}+\alpha_2^{i+1}+2\alpha_1^{i+2}+\alpha_2^{i+2},\\ 
&\widetilde{\varphi}(\alpha_2^{i+2})-\widetilde{\varphi}(\alpha_1^{i+2})=-\alpha_1^i-\alpha_2^{i}-\alpha_1^{i+1}-\alpha_2^{i+1}-\alpha_1^{i+2}-\alpha_2^{i+2}. 
\end{aligned}
\]
Then the following integer matrix is obtained from the above coefficients:
\[
\begin{pmatrix}
-1 & -1 & -1 & 0 & 0 & 0  \\
-1 & 0 & 0 & 0 & -1 & -1  \\
0 & 0 & -1 & 0 & -1 & -1  \\
-1 & -1 & 0 & 0 & -1 & 0  \\
2 & 1 & 2 & 1 & 2 & 1 \\
-1 & -1 & -1 & -1 & -1 & -1 
\end{pmatrix}.
\]
By computing the echelon form of this matrix over the integers, we have 
\[
\begin{pmatrix}
1 & 0 & 0 & 0 & 0 & 2  \\
0 & 1 & 0 & 0 & 0 & 2  \\
0 & 0 & 1 & 0 & 0 & 2  \\
0 & 0 & 0 & 1 & 0 & 2  \\
0 & 0 & 0 & 0 & 1 & 2  \\
0 & 0 & 0 & 0 & 0 & 3 
\end{pmatrix}.
\]
This means that
$\alpha_1^{i+j}+2\alpha_2^{i+2}, \alpha_2^{i+j}+2\alpha_2^{i+2} \in \widetilde{\varphi}(L_A(C_0))$
for $i \in \{3k+1 \mid 0 \le k \le m-1\}$ and $0 \le j \le 2$.
\end{proof}

\begin{lemma}\label{iso between Const A and B in p=3}
Let $K \subset K_3^m$ be a self-orthogonal code.  Set $C=C_A(K)$ and $C_0=C_B(K)$. Then we have $L_A(C_0) \cong L_B(C)$ as lattices.
\end{lemma}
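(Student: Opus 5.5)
The map $\widetilde{\varphi}$ is an isometry of the ambient space $\bigoplus_{j=1}^{m}\bigoplus_{i=1}^{3}\langle\alpha_1,\alpha_2\rangle_{\mathbb{R}}$, since each block $\varphi$ preserves $(\cdot,\cdot)$. So it suffices to prove $\widetilde{\varphi}(L_A(C_0)) = L_B(C)$. I will prove the inclusion $\subseteq$ and then compare indices (equivalently, determinants) in the common ambient space $(A_2^*)^{3m}$.

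\emph{Step 1: the inclusion $\widetilde{\varphi}(L_A(C_0)) \subseteq L_B(C)$.} By Lemma~\ref{generating set: ConstA and B}(1), $L_A(C_0)$ is generated by $A_2^{3m}$ together with the vectors $\lambda_c$ for $c$ running over a generating set of $C_0=K+(d_3^m)_0$.

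The images of the $\alpha_l^i$ lie in $L_B(C)$ by Lemma~\ref{Const B p=3 constA}. For the code generators:
\begin{itemize}
\item For $c\in K$, write each nonzero block as a multiple of $(1,2,0)$. Then $\lambda_c$ is an explicit combination of the $\varepsilon^{i}_1$, and $\widetilde{\varphi}$ is linear and block-diagonal on triples. So it is enough to compute $\varphi$ on $(\varepsilon_1,2\varepsilon_1,0)$ (more precisely on $\lambda_{(1,2,0)}=(\varepsilon_1,\lambda_2,0)$), which is a rational combination of the $\alpha$'s in three coordinates.
\item For the generators $u_i+2u_m$ of $(d_3^m)_0$, compute $\varphi(\varepsilon_1,\varepsilon_1,\varepsilon_1)$ in the same way.
\end{itemize}
I expect the image of $\lambda_{(1,2,0)}$ to be a vector of the form $\lambda_{c'}+(\text{element of }A_2^3)$ with $c'\in K$. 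I expect $\varphi(\varepsilon_1,\varepsilon_1,\varepsilon_1)$ to lie in $A_2^3$ plus a suitable element, and the pairing of $u_i$ with $u_m$ to give integrality against $\chi_\Delta$ precisely because the $\bar{\cdot}$-sum vanishes.

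For each such image, two membership conditions must be checked:
\begin{itemize}
\item membership in $L_A(C)$, which uses $C=K+d_3^m$ and the relations $\varepsilon_i\equiv\varepsilon_1 \bmod A_2$;
\item integrality of $(x,\chi_\Delta)$, which uses $(\alpha_l,\rho_\Delta)=1$ and $(\varepsilon_1,\rho_\Delta)=1$ for $A_2$, so that $(\varepsilon_1,\tfrac13\rho_\Delta)=1/3$.
\end{itemize}

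\emph{Step 2: index comparison.} Both lattices have full rank $6m$.

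For $L_B(C)$: it has index $3$ in $L_A(C)$, because $\chi_\Delta$ pairs with $\alpha^1_1$ to $1/3$. Hence
\[
\det L_B(C) = 9\det L_A(C) = 9\cdot 3^{2\cdot 3m}\cdot 3^{-2\dim C}\cdot 3^{-3m}.
\]

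For $L_A(C_0)$: since $\dim C_0=\dim C-1$, we get $\det L_A(C_0)=9\det L_A(C)$.

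The two determinants agree, so the inclusion from Step 1 is an equality. This gives $L_A(C_0)\cong L_B(C)$.

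Lemma~\ref{const B p=3 cal2} can serve as a cross-check, or as an alternative route to the reverse inclusion. It shows that $\widetilde{\varphi}(L_A(C_0))$ contains the index-$3$ sublattice of $A_2^{3m}$ consisting of vectors with zero $\chi$-pairing in each triple. One could instead generate $L_B(C)$ via Lemma~\ref{generating set: ConstA and B}(2) and check that each generator lies in $\widetilde{\varphi}(L_A(C_0))$.

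\emph{Main obstacle.} The hard part is Step 1 for the code generators: computing $\varphi$ on the glue vectors $\lambda_{(1,2,0)}$ and $(\varepsilon_1,\varepsilon_1,\varepsilon_1)$, and verifying that the images land in $L_A(C)$ with integral $\chi_\Delta$-pairing. This is where the self-orthogonality of $K$, and the specific choice of $\varphi$ matching $(1,2,0)$-glue with $d_3$-glue, are actually used. I would do this by expressing $\varepsilon_2=\varepsilon_1-\alpha_1$ and $\varepsilon_3=\varepsilon_1-\alpha_1-\alpha_2$ and inverting $\varphi$ on the explicit $6\times 6$ basis matrix, as was done in Lemma~\ref{const B p=3 cal2}.
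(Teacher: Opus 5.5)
Your proposal is correct. Step~1 is the same as the first half of the paper's proof: generators from Lemma~\ref{generating set: ConstA and B}(1), Lemma~\ref{Const B p=3 constA}, and the images of the glue vectors. The paper's computations confirm your expectations. On one triple, $\varphi(\lambda_{(1,2,0)})=(\varepsilon_3,-\varepsilon_3,0)=\lambda_{(1,2,0)}-(\alpha_1+\alpha_2,0,0)$, with $\chi_\Delta$-pairing $0$. Also $\varphi(\varepsilon_1,\varepsilon_1,\varepsilon_1)=(-\varepsilon_2,-\varepsilon_3,-\varepsilon_2)\in\lambda_{(2,2,2)}+A_2^3$, with pairing $1/3$, so the zero-sum condition defining $(d_3^m)_0$ gives integrality.

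The genuine difference is the reverse direction. The paper proves $L_B(C)\subseteq\widetilde{\varphi}(L_A(C_0))$ generator by generator. It uses Lemma~\ref{generating set: ConstA and B}(2), explicit generators of $L_B(\{\boldsymbol{0}\})$, and the integer echelon-form computation of Lemma~\ref{const B  p=3 cal2}. You replace all of this with the count $[L_A(C):L_B(C)]=3=[L_A(C):L_A(C_0)]$, which turns the inclusion into an equality.

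Your route is shorter and makes Lemma~\ref{const B  p=3 cal2} unnecessary. It also never uses self-orthogonality of $K$, contrary to your closing remark: the $K$-blocks already map to vectors of $\chi_\Delta$-pairing $0$. So your argument proves the statement for every $K\subseteq K_3^m$. The paper's route, in exchange, exhibits explicit preimages of the generators of $L_B(C)$.

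A few details you should state explicitly:
\begin{itemize}
\item The index $[L_A(C):L_B(C)]$ is exactly $3$, not merely at least $3$. This holds because $3\chi_\Delta=(\rho_\Delta,\ldots,\rho_\Delta)$ with $\rho_\Delta=\alpha_1+\alpha_2\in A_2$, so $(x,\chi_\Delta)\in\frac13\mathbb{Z}$ for all $x\in(A_2^*)^{3m}$.
\item The equality $\dim C_0=\dim C-1$ uses $K\cap d_3^m=\{\boldsymbol{0}\}$, which holds blockwise because $\langle(1,2,0)\rangle\cap\langle(1,1,1)\rangle=0$.
\item In Step~1, the image of $\lambda_{(1,2,0)}$ placed in a single triple need not lie in $L_B(C)$, since that vector need not come from a codeword of $K$. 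You must sum over the triples of an actual codeword of $K$. Before doing so, replace $\lambda_{(2,1,0)}$ by $2\lambda_{(1,2,0)}$ modulo $A_2^3$; this is harmless because $\widetilde{\varphi}(A_2^{3m})\subseteq L_B(C)$.
\end{itemize}
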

\begin{proof}

We first prove that $\widetilde{\varphi}(L_A(C_0)) \subset L_B(C)$. By Lemma \ref{generating set: ConstA and B} (1), it suffices to show that $\widetilde{\varphi}(\alpha_l^i) \in L_B(C)$  for $ 1 \le i \le 3m$, $1 \le l \le 2$ and $\widetilde{\varphi}(\lambda_c) \in L_B(C)$ for any $c \in K \cup \{u_i+2u_m \mid 1\le i \le m\}$, where $\lambda_c$ is as in (\ref{def of lambda}) and $u_i$ is as in (\ref{def of u}). 
By Lemma \ref{Const B p=3 constA}, we have  $\widetilde{\varphi}(\alpha_l^i) \in L_B(C)$  for $ 1 \le i \le 3m$ and $1 \le l \le 2$.  
Let $i \in \{3k+1 \mid 0 \le k \le m-1\}$.
Note that $\lambda_1=\varepsilon_1$ and $\lambda_2=2\varepsilon_1-\alpha_1.$
Since
\[
\begin{aligned}
\widetilde{\varphi}(\varepsilon_1^i+\varepsilon_1^{i+1}+\varepsilon_1^{i+2})
 &= (2\varepsilon_1^i-\alpha_1^i+
     2\varepsilon_1^{i+1}-\alpha_1^{i+1}+
     2\varepsilon_1^{i+2}-\alpha_1^{i+2})+(-\alpha_2^i-\alpha_2^{i+2}),\\
\end{aligned}
\]
we have  $\widetilde{\varphi}(\lambda_c) \in L_B(C)$ for any $c \in  \{u_i+2u_m \mid 1\le i \le m\}$. 
By direct calculations, we have the following:
\begin{align}
\label{Const B p=3 gen 1} &\widetilde{\varphi}(\varepsilon_1^i+2\varepsilon_1^{i+1}-\alpha_1^{i+1})= (\varepsilon_1^i+2\varepsilon_1^{i+1}-\alpha_1^{i+1})+(-\alpha_1^i-\alpha_2^i),\\
\label{Const B p=3 gen 2} &\widetilde{\varphi}(2\varepsilon_1^i-\alpha_1^i+\varepsilon_1^{i+1})=(2\varepsilon_1^{i+1}-\alpha_1^{i+1}+\varepsilon_1^{i+2})+(-\alpha_1^{i+2}-\alpha_2^{i+2}). 
\end{align}
Since $K$ is self-orthogonal,  we see that $\widetilde{\varphi}(\lambda_c) \in L_B(C)$ for any $c \in K$. 

Next, we prove that $L_B(C) \subset \widetilde{\varphi}(L_A(C_0))$. By Lemma \ref{generating set: ConstA and B} (2), it suffices to show that $L_B(\{\boldsymbol{0}\}) \subset  \widetilde{\varphi}(L_A(C_0)) $ and $\lambda_c \in \widetilde{\varphi}(L_A(C_0))$ for any $c \in K \cup \{u_i \mid 1 \le i \le m\}$.  
We verify $L_B(\{\boldsymbol{0}\}) \subset  \widetilde{\varphi}(L_A(C_0))$. Since $L_B(\{\boldsymbol{0}\})=\langle \alpha_1^k+2\alpha_2^{3m}, \alpha_2^k+2\alpha_2^{3m} \mid 1 \le k \le 3m \rangle$, it suffices to show that  $\alpha_1^k+2\alpha_2^{3m}, \alpha_2^k+2\alpha_2^{3m} \in \widetilde{\varphi}(L_A(C_0))$ for $1 \le k \le 3m$. 
By Lemma \ref{const B  p=3 cal2}, 
we see that 
\begin{equation}\label{equ const B p=3}
\alpha_1^{i+j}+2\alpha_2^{i+2}, \alpha_2^{i+j}+2\alpha_2^{i+2} \in \widetilde{\varphi}(L_A(C_0))
\end{equation}
for $0 \le j \le 2$.
Since 
\[\widetilde{\varphi}((\varepsilon_1^i+\varepsilon_1^{i+1}+\varepsilon_1^{i+2})-2\alpha_1^{i+2})=-\alpha_1^i-\alpha_2^i-\alpha_1^{i+1}-\alpha_1^{i+2}-\alpha_2^{i+2},\] 
we have $x_i+2x_{m-1} \in \widetilde{\varphi}(L_A(C_0)),$
 where $x_i=-\alpha_1^i-\alpha_2^i-\alpha_1^{i+1}-\alpha_1^{i+2}-\alpha_2^{i+2}$. Combining this with (\ref{equ const B p=3}), we have  $\alpha_1^k+2\alpha_2^{3m}, \alpha_2^k+2\alpha_2^{3m} \in \widetilde{\varphi}(L_A(C_0))$ for $1 \le k \le 3m$. Finally, we verify that $\lambda_c \in \widetilde{\varphi}(L_A(C_0))$ for any $c \in K \cup \{u_i \mid 1 \le i \le m\}$.  
Since  
$\widetilde{\varphi}(\alpha_1^{i+2})=\varepsilon_1^i+\varepsilon_1^{i+1}+\varepsilon_1^{i+2}$, 
we see that  $\lambda_c \in \widetilde{\varphi}(L_A(C_0))$ for any $c \in  \{u_i \mid 1 \le i \le m\}$. By (\ref{Const B p=3 gen 1}) and (\ref{Const B p=3 gen 2}), we have $\lambda_c \in \widetilde{\varphi}(L_A(C_0))$ for any $c \in K$.
\end{proof}

By \cite{HM1}, we obtain a classification of the maximal self-orthogonal codes over $\mathbb{F}_3$ of length at most $9$. 
By using MAGMA with this classification, we have the following lemma: 

\begin{lemma}\label{Const B p=3 length at most 9}
Let $k$ be an integer such that $1 \le k \le 9$ and $C,D$  self-orthogonal codes of length $k$ over $\mathbb{F}_3$.
If $L_B(C) \cong L_B(D)$ as lattices, then we have $C \cong D$ as codes.
\end{lemma}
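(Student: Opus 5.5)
The statement is a finite verification, and I would carry it out by computer, organized so that the search space is as small as possible. The first reduction is that every self-orthogonal code over $\mathbb{F}_3$ of length $k\le 9$ is contained in a maximal self-orthogonal code. The classification of \cite{HM1} lists all of these up to equivalence. Hence every self-orthogonal code of length $k$ is, up to isomorphism, a subcode of one of finitely many explicit maximal codes. For each length $k\le 9$ I would therefore enumerate all subspaces of each representative maximal code. I would then reduce the list modulo code isomorphism, i.e.\ modulo the monomial group generated by the $\sigma_i$ and $S_k$, using MAGMA's code-equivalence test. This produces a complete list $\mathcal{C}_k$ of isomorphism classes of self-orthogonal codes of length $k$.

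Next, for each $C\in\mathcal{C}_k$ I would build the lattice $L_B(C)$ explicitly in $\mathbb{Q}^{3k}$. The generators come from Lemma~\ref{generating set: ConstA and B}~(2): the vectors $\lambda_c$ for $c$ in a basis of $C$, together with a basis of $L_B(\{\boldsymbol{0}\})$. As in the proof of Lemma~\ref{iso between Const A and B in p=3}, the latter is generated by $\alpha_1^i+2\alpha_2^{k},\ \alpha_2^i+2\alpha_2^{k}$ for $1\le i\le k$. From these I would compute a Gram matrix. The claim then becomes: distinct classes in $\mathcal{C}_k$ give non-isomorphic lattices. To check this cheaply, I would first separate lattices by invariants: the determinant (which is $|L_A(C)^*/L_A(C)|\cdot 9$, so it depends only on $\dim C$), the theta series up to a few terms, the root system $L_B(C)(2)$, and $|\operatorname{Aut}(L_B(C))|$. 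Only for pairs that agree on all invariants would I call MAGMA's lattice isomorphism test. Since the rank is at most $18$, this should be feasible.

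The main obstacle is the size of the enumeration of subcodes for $k=8,9$. A maximal self-orthogonal code there has dimension $4$, so there are many subspaces, though their number is still manageable. I would handle this by enumerating subspaces modulo the automorphism group of each maximal code, rather than all of them. A conceptual shortcut would be to use the earlier results in the same spirit as the later argument. Whenever $L_B(C)\cong L_B(D)$ induces an isomorphism $L_A(C)\cong L_A(D)$, Theorem~\ref{answer of isom prob for const A} finishes, so one could restrict the machine check to pairs where this fails. But for lengths at most $9$, the direct computation is the more reliable route, and I would commit to it.
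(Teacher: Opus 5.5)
Your proposal is correct and follows the paper's approach: the paper likewise starts from the classification of maximal self-orthogonal ternary codes of length at most $9$ in \cite{HM1} and checks the claim by a MAGMA computation on the lattices $L_B(C)$. You spell out steps the paper leaves implicit: enumerating subcodes up to equivalence, building $L_B(C)$ from the generators of Lemma~\ref{generating set: ConstA and B}~(2), and separating lattices by invariants before running a direct isomorphism test.
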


\subsection{Preparations  for the case $p =5$}
Let $d_5$ denotes the subspace  $\langle (1, 2) \rangle \subset \mathbb{F}_5^2$.  Define a map $\bar{\cdot}: d_5 \rightarrow \mathbb{F}_5$ by $a (1, 2) \mapsto a$. Let $m$ be a positive integer and set $(d_5^{m})_{0} = \{ (x_1, \ldots, x_m) \in d_5^{m} \mid \bar{x}_1+\cdots+\bar{x}_m = 0\}$. The above codes $d_5^m$ and $(d_5^{m})_{0}$ are analogues of codes constructed by Construction A and B from a Kleinian code in \cite[Subsection 1.5]{Shima}. We identify the vector space $d_5^{m}$ with subsets of $\mathbb{F}_5^{2m}$. Note that the code $d_5^m$ is self-orthogonal.
For $1 \le i \le m$, we define 
\begin{equation}\label{def of v}
v_i = (v_{i,1}, v_{i,2}, \ldots, v_{i,2m}) \in \mathbb{F}_5^{2m} 
\end{equation}
by $v_{i,2i-1}=1, v_{i,2i}=2$ and $v_{i,j}=0$ otherwise. Note that $d_5^m=\langle v_i \mid 1 \le i \le m\rangle$ and $(d_5^m)_0=\langle v_i+4v_m \mid 1 \le i \le m \rangle$.

The following proposition gives conditions for a code over $\mathbb{F}_5$ to be realized by $d_5^m$ and  $(d_5^{m})_{0}$:
\begin{proposition}\label{conditions for code over F5}
Let $C$ be a self-orthogonal code of length $2m$ over $\mathbb{F}_5$. Then the following hold:
\begin{enumerate}
\item If $d_5^{m} \subset C$, then we have $C = d_5^m$.
\item \label{conditions for code: Const B} Suppose that $(d_5^m)_0 \subset C$ and $v_1 \in C^{\perp} \setminus C$. Then we have $C = (d_5^m)_0$ and $C+\mathbb{F}_5v_1 = d_5^m$. 
\end{enumerate}
\end{proposition}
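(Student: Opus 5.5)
The argument is a dimension count combined with an orthogonality computation over $\mathbb{F}_5$. Throughout, write a vector of $\mathbb{F}_5^{2m}$ as $x=(x_1,\ldots,x_m)$ with $x_i\in\mathbb{F}_5^2$.

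The key observation is that $(1,2)$ is isotropic, since $1+4=5=0$. Hence $\langle(1,2)\rangle^{\perp}=\langle(1,2)\rangle$ inside $\mathbb{F}_5^2$, because this orthogonal complement is one-dimensional and contains $(1,2)$.

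For (1), assume $d_5^m\subset C$. Then $C\subset C^{\perp}\subset (d_5^m)^{\perp}$. Let $x\in C$. Since $v_i\in C$, we get $(x_i,(1,2))=(x,v_i)=0$ for each $i$, so $x_i\in\langle(1,2)\rangle$. This gives $x\in d_5^m$, and therefore $C=d_5^m$. Equivalently, $(d_5^m)^{\perp}=d_5^m$, because $d_5^m$ has dimension $m$ and is self-orthogonal of length $2m$.

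For (2), assume $(d_5^m)_0\subset C$ and $v_1\in C^{\perp}\setminus C$. Set $\widetilde C=C+\mathbb{F}_5v_1$.
\begin{enumerate}
\item Since $v_1\in C^{\perp}$ and $(v_1,v_1)=0$, the code $\widetilde C$ is self-orthogonal.
\item Each $v_i=(v_i+4v_m)-4(v_m+4v_1)-16v_1$ is a combination of $v_1$ and elements of $(d_5^m)_0$. Check: $v_i+4v_m-4v_m-16v_1-16v_1=v_i-32v_1=v_i-2v_1$ is wrong, so instead write $v_i=(v_i+4v_1)+v_1$, where $v_i+4v_1\in(d_5^m)_0$. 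Hence $v_i\in\widetilde C$ for all $i$, so $d_5^m\subset\widetilde C$.
\item By (1) applied to $\widetilde C$, we get $\widetilde C=d_5^m$, so $C+\mathbb{F}_5v_1=d_5^m$.
\item Then $(d_5^m)_0\subset C\subsetneq d_5^m$. Since $(d_5^m)_0$ has codimension one in $d_5^m$, this forces $C=(d_5^m)_0$.
\end{enumerate}

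The only point requiring care is the isotropy of $(1,2)$ over $\mathbb{F}_5$. Once that is in hand, (2) reduces to (1) through the one-step enlargement $\widetilde C$, and I do not expect a real obstacle.
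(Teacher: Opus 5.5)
Your proof is correct and is essentially the paper's argument: part (1) is the same coordinatewise orthogonality check, and in part (2) the paper uses the same identity $v_i=(v_i+4v_1)+v_1$ to get $v_i\in C^{\perp}$ for all $i$ (hence $C\subset d_5^m$), then writes $x=\sum a_iv_i\in C$ and uses $v_m\notin C$ to force $\sum a_i=0$; you repackage this as applying (1) to $C+\mathbb{F}_5v_1$ followed by a codimension-one count. You should delete the false start in your step 2 of part (2), since that first identity gives $v_i-2v_1$ rather than $v_i$.
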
 

\begin{proof}
(1) It suffices to prove that $C \subset d_5^m.$ Let $x=(x_1,x_2, \ldots, x_m) \in C$, where $x_i \in \mathbb{F}_5^2$ for each $i$. 
Since $(x_i, (1,2))=0$ for each $i$, we have $x_i \in \langle (1,2) \rangle^{\perp}=\langle (1,2)\rangle$ for each $i$. Hence, we have $x \in d_5^m$. 

\noindent (2) We first prove that $C=(d_5^m)_0$. It suffices to prove that $C \subset (d_5^m)_0$. Let $x=(x_1,x_2, \ldots, x_m) \in C$, where $x_i \in \mathbb{F}_5^2$ for each $i$. Since $v_1 \in C^{\perp} \setminus C$ and $4v_1+v_i \in (d_5^m)_0$ for each $i$, we have $v_i \in  C^{\perp} \setminus C$ for each $i$. Hence, we see that $x \in d_5^m$. Let $x=a_1v_1+ a_2v_2+ \cdots+a_m v_m$, where $a_i \in \mathbb{F}_5$ for each $i$. Since $v_i+4v_m \in C$ and $v_i \in C^{\perp} \setminus C$ for each $i$, we have $ x \in (d_5^m)_0$. Hence, we see that  $C = (d_5^m)_0$ and $C+\mathbb{F}_5v_1 = d_5^m$. 
\end{proof}

By using the above proposition,  we have another criterion for a code over $\mathbb{F}_5$ to be realized by  $d_5^m$ and  $(d_5^{m})_{0}$.
\begin{lemma}\label{another criterion for Const B in p=5}
Let $C$ be a self-orthogonal code of length $k$ over $\mathbb{F}_5$. Suppose that $C$ satisfies the following condition:
\begin{equation}\label{condition for another criterion} 
v_1 \in C^{\perp}\setminus C\ \text{and}\  k/2+\#C(2) \le \#(v_1+C)(2). 
\end{equation}
Then we have $ 2 \mid k $ and we have $C \cong (d_5^{k/2})_{0}$ and $C+\mathbb{F}_5v_1 \cong d_5^{k/2} $ as codes.
\end{lemma}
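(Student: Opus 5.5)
We will follow the proof of Lemma~\ref{another criterion for Const B in p=3} closely, with weight-$2$ vectors and the vectors $v_i$ in place of weight-$3$ vectors and $u_i$. The aim is a self-orthogonal code $C'\cong C$ that still satisfies (\ref{condition for another criterion}) and contains $\{v_i \mid 1\le i\le k/2\}\subset v_1+C'$. Once this holds, $2\mid k$, and since $v_1-v_i\in C'$, also $4v_1+v_i=v_i-v_1+5v_1\in C'$ up to scalar. Hence $(d_5^{k/2})_0=\langle v_i+4v_m\rangle\subset C'$, and Proposition~\ref{conditions for code over F5} (\ref{conditions for code: Const B}) gives $C'=(d_5^{k/2})_0$ and $C'+\mathbb{F}_5v_1=d_5^{k/2}$.

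The induction step runs as follows. Suppose $U_r=\{v_i\mid 1\le i\le r\}\subset v_1+C$ with $r<k/2$, and let $x=(x_1,\ldots,x_r,y)\in(v_1+C)(2)$, where $x_i\in\mathbb{F}_5^2$. Since $v_i-x\in C$ and $v_i\in C^\perp$ (because $v_1\in C^\perp$ and $v_i-v_1\in C\subset C^\perp$), we get $(x,v_i)=(v_i,v_i)=0$. This means $(x_i,(1,2))=0$, so $x_i\in\langle(1,2)\rangle$. A nonzero $x_i$ of this form has weight $2$, since $(1,2)$ has both coordinates nonzero. So if $x_i\neq0$ for some $i\le r$, then $x$ is supported exactly on block $i$, and $x=a v_i$ for some $a\in\mathbb{F}_5^\times$.

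To count these vectors, let $X_r$ be the set of such $x$ with $x\neq v_i$. Then $x-v_i=(a-1)v_i$ is a nonzero weight-$2$ vector of $C$, and $x\mapsto x-v_i$ is injective. Hence $\#X_r\le\#C(2)$; in fact this set of vectors lies in $C(2)$ only through multiples of $v_i$. Therefore
\[
\#\{(0,\ldots,0,y)\in(v_1+C)(2)\}\ \ge\ \#(v_1+C)(2)-r-\#C(2)\ >\ \#(v_1+C)(2)-k/2-\#C(2)\ \ge 0 .
\]
So there is $z=(0,\ldots,0,y)\in(v_1+C)(2)$ with $\#\operatorname{Supp}y=2$. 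Write $z$ with entries $b,c\neq0$ on two coordinates. Because $z\in C^\perp$ and $z-v_1\in C$, we have $(z,z)=(v_1,v_1)=0$, so $b^2+c^2=0$ and hence $c=\pm 2b$.

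The normalization of $z$ is the main obstacle, because in $\mathbb{F}_5$ we cannot scale $z$ freely: we need $z$ itself, not a multiple, to lie in $v_1+C$. The isomorphism group only allows permutations and sign changes $\sigma_i$. Since $b\in\{\pm1,\pm2\}$, I plan to first permute the two support coordinates to positions $2r+1,2r+2$. Then I apply sign changes, and if needed swap the two coordinates, to turn $(b,c)$ into $(1,2)$. The available forms are $(\pm1,\pm2)$ and $(\pm2,\pm1)$: signs handle the first form, and swapping the pair turns $(2,\pm1)$ into $(\pm1,2)$. None of these operations touches the first $2r$ coordinates, so $U_r$ is preserved. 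The resulting code $C''\cong C$ has the same weight counts, and $v_1\in C''^\perp\setminus C''$ still holds, so (\ref{condition for another criterion}) is preserved and $v_{r+1}\in v_1+C''$. Iterating until the counting inequality above fails shows that $r$ must reach exactly $k/2$, which forces $2\mid k$ and completes the argument.
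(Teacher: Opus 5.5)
Your proposal is correct and follows the paper's proof, which runs the same inductive counting argument as in the $p=3$ case. The differences are minor. You bound the multiples $av_i$ ($a\neq 1$) by an injection into $C(2)$, whereas the paper asserts directly that $x=v_i$; this holds because otherwise $v_i$, and hence $v_1$, would lie in $C$. You also explicitly normalize the new weight-$2$ vector to $v_{r+1}$ using $(z,z)=0$ and sign changes and coordinate swaps, a step the paper leaves implicit.
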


\begin{proof}
This proof is similar to the proof of Lemma \ref{another criterion for Const B in p=3}. We assume that $1 \le r < k/2$ and $\{v_i \mid 1 \le i \le r\} \subset v_1+C$. Let $x=(x_1,\ldots, x_r,y) \in (v_1+C)(2)$, where $x_i \in \mathbb{F}_5^2$ for each $i$ and $y \in \mathbb{F}_5^{k-2r}$. Since $v_i-x \in C$ and $v_i \in C^{\perp}$ for $1 \le i \le r$, we have $(v_i-x,v_i)=0$ for $1 \le i \le r$. Since $(x_i, (1,2))=0$ for $1 \le i \le r$, we see that  $x_i \in \langle (1,2) \rangle$ for $1 \le i \le r$. Hence, if $x_i \neq 0$ for some $ 1 \le i \le r$, then we obtain $x =v_i$. 

Let $V_r =\{v_i \mid 1 \le i \le r\}$.  Then we have the following: 
\begin{align*}
\#\{(0,\ldots,0, y) \in (v_1+C)(2) \mid y \in \mathbb{F}_5^{k-2r}\}
&=\#(v_1+C)(2)-\#V_r \\
&>\#(v_1+C)(2)-k/2-\#C(2) \\ 
&\ge 0.
\end{align*}
Hence, there exists a self-orthogonal code $C'\cong C$ which satisfies  (\ref{condition for another criterion}) and  $\{v_i \mid 1 \le i \le r+1\} \subset v_1 +C'$. This implies $2 \mid k$. Moreover, by Proposition \ref{conditions for code over F5}, we have  $C \cong (d_5^{k/2})_{0}$ and $C+\mathbb{F}_5v_1 \cong d_5^{k/2} $ as codes.  
\end{proof}

Let us establish an isomorphism between the lattices $L_A((d_5^m)_0)$ and $L_B(d_5^m)$. 
Let $\alpha_i\ (1\le i \le 4)$ and $\varepsilon_j\ (1 \le j \le 5)$ be as in Subsection \ref{Def of Const A and B}. 
Note that $\varepsilon_1=(1/5)(4\alpha_1+3\alpha_2+2\alpha_3+\alpha_4)$ and $\varepsilon_i =\varepsilon_1-\alpha_1-\cdots-\alpha_{i-1}$ for $2 \le i \le 5$.
We define a linear map $\psi: \bigoplus_{i=1}^2 \langle \alpha_i \mid 1 \le i \le 4 \rangle_{\mathbb{R}} \rightarrow  \bigoplus_{i=1}^2 \langle \alpha_i \mid 1 \le i \le 4 \rangle_{\mathbb{R}}$ as follows: 
\[
\begin{aligned}
&\psi((\alpha_1,0))=(\varepsilon_1,-(\varepsilon_1+\varepsilon_2+\varepsilon_4)), 
\psi((\alpha_2,0))=(\varepsilon_3, \varepsilon_2+\varepsilon_4),\\
&\psi((\alpha_3,0))=(\varepsilon_5, \varepsilon_1+\varepsilon_3),
\psi((\alpha_4,0))=(\varepsilon_2, -(\varepsilon_1+\varepsilon_3+\varepsilon_4)),\\
&\psi((0,\alpha_1))=(\varepsilon_1,\varepsilon_1+\varepsilon_2),
\psi((0,\alpha_2))=(\varepsilon_2,\varepsilon_3+\varepsilon_4),\\
&\psi((0,\alpha_3))=(\varepsilon_3,-(\varepsilon_2+\varepsilon_3+\varepsilon_4)),
\psi((0,\alpha_4))=(\varepsilon_4,\varepsilon_2+\varepsilon_3).
\end{aligned}
\]
By using (\ref{varepsilon:inner product}), we can directly check that $\psi$ preserves the inner product $(\cdot,\cdot)$.
 Let $m$ be a positive integer.  We define a map $\widetilde{\psi}: \bigoplus_{j=1}^m( \bigoplus_{i=1}^2 \langle \alpha_i \mid 1 \le i \le 4 \rangle_{\mathbb{R}} ) \rightarrow \bigoplus_{j=1}^m(\bigoplus_{i=1}^2 \langle \alpha_i \mid 1 \le i \le 4 \rangle_{\mathbb{R}} )$ by 
\[(x_1, \ldots, x_m) \mapsto (\psi(x_1), \ldots, \psi(x_m)).\] 
Let $\alpha_j^{i}$ and $\varepsilon_j^{i}$ denotes the element $\alpha_j$ and $\varepsilon_j$ in the $i$-th coordinate, respectively. 

\begin{lemma}\label{Const B p=5 constA} 
Let $C = d_5^m$.
Then we have $\widetilde{\psi}(\alpha_l^i) \in L_B(C)$  for $ 1 \le i \le 2m$ and $1 \le l \le 4$.
\end{lemma}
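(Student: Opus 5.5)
The plan follows the proof of Lemma \ref{Const B p=3 constA}. Since $\widetilde{\psi}$ acts blockwise on consecutive pairs of coordinates, it suffices to fix $i \in \{2k+1 \mid 0 \le k \le m-1\}$ and treat the pair $(i,i+1)$. For each of the eight vectors $\widetilde{\psi}(\alpha_l^i)$ and $\widetilde{\psi}(\alpha_l^{i+1})$ ($1 \le l \le 4$), we rewrite the image from the definition of $\psi$ in the form
\[
\widetilde{\psi}(\alpha_l^{i+j}) = \lambda_{a v_{(i+1)/2}} + \beta_{l,j}, \qquad \beta_{l,j} \in A_4^{i} \oplus A_4^{i+1},
\]
where $a \in \mathbb{F}_5^{\times}$ and $\lambda$ is as in (\ref{def of lambda}). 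To do this we use $\varepsilon_s = \varepsilon_1 - \alpha_1 - \cdots - \alpha_{s-1}$ together with the residues (\ref{mod A_4}). For example, the first coordinate of every image is some $\varepsilon_s \equiv \varepsilon_1 \bmod A_4$. The second coordinate is of the form $\varepsilon_s+\varepsilon_t \equiv 2\varepsilon_1$ or $-(\varepsilon_s+\varepsilon_t+\varepsilon_u) \equiv 2\varepsilon_1 \bmod A_4$. So every image lies in $\lambda_{v}+A_4^2$, where $v=(1,2)$ is the generator of $d_5$. In particular each $\widetilde{\psi}(\alpha_l^{i+j})$ lies in $L_A(C)$, since $v_{(i+1)/2} \in C = d_5^m$.

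Next we check the Construction B condition $(x,\chi_{\Delta}) \in \mathbb{Z}$. We first show $\lambda_{v_{(i+1)/2}} \in L_B(C)$ for one fixed representative; the cleanest choice is one of the eight images itself, say $\widetilde{\psi}(\alpha_1^{i+1}) = (\varepsilon_1, \varepsilon_1+\varepsilon_2)$. Then we compute
\[
(\varepsilon_1, \rho_{\Delta}) + (\varepsilon_1+\varepsilon_2, \rho_{\Delta}),
\]
using $(\varepsilon_s,\rho_{\Delta}) = (p+1-2s)/2$ (here $(5+1-2s)/2 = 3-s$, so the values are $2,1,0,-1,-2$). This gives $2+(2+1)=5$, which is divisible by $p=5$, so this vector lies in $L_B(C)$. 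For every other image we may then replace the representative by this one. It remains to show that each difference $\widetilde{\psi}(\alpha_l^{i+j}) - \widetilde{\psi}(\alpha_1^{i+1}) \in A_4^2$ has $\rho_\Delta$-pairing divisible by $5$. Since $(\alpha_s,\rho_{\Delta})=1$, this pairing is just the sum of the coefficients in the simple roots, that is, $(x,\rho_\Delta)$ summed over both coordinates. Using the $\varepsilon_s$ values above, we get $(\widetilde{\psi}(\alpha_l^{i+j}),\rho)$ directly for each of the eight images. For instance, $(\varepsilon_1,-(\varepsilon_1+\varepsilon_2+\varepsilon_4))$ gives $2-(2+1-1)=0$, and $(\varepsilon_3,\varepsilon_2+\varepsilon_4)$ gives $0+(1-1)=0$. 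All eight totals should be $\equiv 0 \bmod 5$, which is exactly the condition $(x,\chi_\Delta)=\frac15 (x,\rho)\in\mathbb{Z}$.

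So the argument reduces to eight short evaluations of $\sum (\varepsilon_s,\rho_\Delta)$, which is routine once the formula $(\varepsilon_s,\rho_\Delta)=3-s$ is in hand. The only mild obstacle is organizing the residue bookkeeping modulo $A_4$ so that membership in $L_A(C)$ is transparent. The signs in the second coordinates are exactly what make every residue equal to $2\varepsilon_1$, so I expect no genuine difficulty beyond checking these eight cases.
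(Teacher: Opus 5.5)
Your proposal is correct and is essentially the paper's argument. The paper also uses $\widetilde{\psi}(\alpha_1^{i+1})=\varepsilon_1^i+2\varepsilon_1^{i+1}-\alpha_1^{i+1}$ as the base point in $L_B(C)$; this is exactly $\lambda_{v_{(i+1)/2}}$, so your $a$ is always $1$. It then checks that every other image differs from it by an element of $A_4^{i}\oplus A_4^{i+1}$ whose simple-root coefficients sum to a multiple of $5$.

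The only difference is bookkeeping. The paper writes out the eight simple-root expansions explicitly, while your formula $(\varepsilon_s,\rho_{\Delta})=3-s$ gives the pairings directly. All eight totals are indeed $0$, except for $\widetilde{\psi}(\alpha_1^{i+1})$, whose total is $5$.
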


\begin{proof}
Let $i \in \{2k+1 \mid 0 \le k \le m-1\}$. 
By the definition of $\widetilde{\psi}$ and direct calculations, we have 
\[
\begin{aligned}
&\widetilde{\psi}(\alpha_1^i)=(\varepsilon_1^i+2\varepsilon_1^{i+1}-\alpha_1^{i+1})+(-\alpha_1^{i+1}-2\alpha_2^{i+1}-\alpha_3^{i+1}-\alpha_4^{i+1}),\\ 
&\widetilde{\psi}(\alpha_2^i)=(\varepsilon_1^i+2\varepsilon_1^{i+1}-\alpha_1^{i+1})+(-\alpha_1^i-\alpha_2^i-\alpha_1^{i+1}-\alpha_2^{i+1}-\alpha_3^{i+1}),\\
&\widetilde{\psi}(\alpha_3^i)=(\varepsilon_1^i+2\varepsilon_1^{i+1}-\alpha_1^{i+1})+(-\alpha_1^i-\alpha_2^i-\alpha_3^i-\alpha_4^i-\alpha_2^{i+1}),\\
&\widetilde{\psi}(\alpha_4^i)=(\varepsilon_1^i+2\varepsilon_1^{i+1}-\alpha_1^{i+1})+(-\alpha_1^i-\alpha_1^{i+1}-\alpha_2^{i+1}-\alpha_3^{i+1}-\alpha_4^{i+1}),\\
&\widetilde{\psi}(\alpha_1^{i+1})=\varepsilon_1^i+2\varepsilon_1^{i+1}-\alpha_1^{i+1},\\
&\widetilde{\psi}(\alpha_2^{i+1})=(\varepsilon_1^i+2\varepsilon_1^{i+1}-\alpha_1^{i+1})+(-\alpha_1^i-\alpha_1^{i+1}-2\alpha_2^{i+1}-\alpha_3^{i+1}),\\
&\widetilde{\psi}(\alpha_3^{i+1})=(\varepsilon_1^i+2\varepsilon_1^{i+1}-\alpha_1^{i+1})+(-\alpha_1^i-\alpha_2^i-\alpha_2^{i+1}-\alpha_3^{i+1}-\alpha_4^{i+1}),\\
&\widetilde{\psi}(\alpha_4^{i+1})=(\varepsilon_1^i+2\varepsilon_1^{i+1}-\alpha_1^{i+1})+(-\alpha_1^i-\alpha_2^i-\alpha_3^i-\alpha_1^{i+1}-\alpha_2^{i+1}).
\end{aligned}
\]
Note that $\varepsilon_1^i+2\varepsilon_1^{i+1}-\alpha_1^{i+1} \in L_B(C)$. Since $(\alpha_l, \rho_{\Delta})=1$ for $1 \le l \le 4$ and $\rho_{\Delta}$ as in Subsection \ref{Def of Const A and B}, we have 
\[\widetilde{\psi}(\alpha_l^{i+j})-(\varepsilon_1^i+2\varepsilon_1^{i+1}-\alpha_1^{i+1}) \in L_B(C)\]
for $0 \le j \le 1$ and $1 \le l \le 4.$ Hence, we have $\widetilde{\psi}(\alpha_l^i) \in L_B(C)$  for $ 1 \le i \le 2m$ and $1 \le l \le 4$.
\end{proof}

\begin{lemma}\label{const B  p=5 cal2}
Let $C_0=(d_5^m)_0$. 
Then we have
$\alpha_l^{i+j}+4\alpha_4^{i+1} \in \widetilde{\psi}(L_A(C_0))$
for $i \in \{2k+1 \mid 0 \le k \le m-1\}$, $0 \le j \le 1$, and $1 \le l \le 4$.
\end{lemma}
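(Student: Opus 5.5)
We follow the proof of Lemma~\ref{const B  p=3 cal2}. Fix $i \in \{2k+1 \mid 0 \le k \le m-1\}$. Since $\alpha_l^{i+j} \in A_4^{2m} \subset L_A(C_0)$, each $\widetilde{\psi}(\alpha_l^{i+j})$ lies in $\widetilde{\psi}(L_A(C_0))$. Because $\widetilde{\psi}$ is linear, the same holds for all integer combinations of these eight vectors.

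From the formulas in the proof of Lemma~\ref{Const B p=5 constA}, every $\widetilde{\psi}(\alpha_l^{i+j})$ equals the common vector $w=\varepsilon_1^i+2\varepsilon_1^{i+1}-\alpha_1^{i+1}$ plus an explicit element of $A_{4,i}\oplus A_{4,i+1}$. In particular $\widetilde{\psi}(\alpha_1^{i+1})=w$. So the seven differences
\[\widetilde{\psi}(\alpha_l^{i+j})-\widetilde{\psi}(\alpha_1^{i+1}), \qquad (l,j)\neq(1,1),\]
are explicit integer combinations of $\alpha_1^i,\dots,\alpha_4^i,\alpha_1^{i+1},\dots,\alpha_4^{i+1}$. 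We also need an eighth vector in the root lattice. Since $5\varepsilon_1=4\alpha_1+3\alpha_2+2\alpha_3+\alpha_4$, the vector $5w$ is an explicit element of $A_{4,i}\oplus A_{4,i+1}$. It is also in the image, being $5\widetilde{\psi}(\alpha_1^{i+1})$.

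Next we write the $8\times 8$ integer matrix whose rows are the coefficient vectors of these eight vectors in the basis $\alpha_1^i,\ldots,\alpha_4^i,\alpha_1^{i+1},\ldots,\alpha_4^{i+1}$, and reduce it to Hermite normal form over $\mathbb{Z}$. We expect the result to have rows $e_s+4e_8$ for $1\le s\le 7$ and last row $5e_8$, up to sign and row operations. This is the analogue of the $p=3$ echelon form with last column $(2,\dots,2,3)$.

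This expectation is consistent with the structure. The image $\widetilde{\psi}(L_A(C_0))$ should sit inside $L_B(d_5^m)$, and $L_B(\{\boldsymbol{0}\})$ restricted to $A_{4,i}\oplus A_{4,i+1}$ is $\{x \mid (x,\chi_\Delta)\in\mathbb{Z}\}$. Since $(\alpha_l,\rho_\Delta)=1$, this sublattice has index $5$ and is generated by the vectors $\alpha_l^{i+j}-\alpha_4^{i+1}\equiv\alpha_l^{i+j}+4\alpha_4^{i+1}$ (mod $5\alpha_4^{i+1}$) together with $5\alpha_4^{i+1}$. The row space of the eight rows is what the Hermite normal form computation certifies, and its rows then give exactly $\alpha_l^{i+j}+4\alpha_4^{i+1}\in\widetilde{\psi}(L_A(C_0))$.

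The main obstacle is the determinant check. The eight rows must span all of this index-$5$ sublattice rather than a proper subgroup, so the $8\times 8$ determinant must be $\pm5$. If it turned out to be $\pm25$, we would add one more generator of $L_A(C_0)$, such as $\widetilde{\psi}(\lambda_{v_i+4v_m})$ with $\lambda_c$ as in (\ref{def of lambda}), and recompute. Apart from that, the remaining work is routine bookkeeping of the coefficients from the proof of Lemma~\ref{Const B p=5 constA}.
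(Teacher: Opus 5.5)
Your proposal is correct and is essentially the paper's proof: the paper uses the same seven differences $\widetilde{\psi}(\alpha_l^{i+j})-\widetilde{\psi}(\alpha_1^{i+1})$ together with $5\widetilde{\psi}(\alpha_1^{i+1})$, and reports the integer echelon form with rows $e_s+4e_8$ ($1\le s\le 7$) and $5e_8$. The determinant check you flag does go through, since the $8\times 8$ matrix has determinant $\pm 5$; combined with your observation that every row has coefficient sum divisible by $5$, this shows the row span is exactly the index-$5$ sublattice, so no extra generator is needed.
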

\begin{proof}
Let $i \in \{2k+1 \mid 0 \le k \le m-1\}$. The following are elements of $\widetilde{\psi}(L_A(C_0))$: 
\[
\begin{aligned}
&\widetilde{\psi}(\alpha_1^i)-\widetilde{\psi}(\alpha_1^{i+1})=-\alpha_1^{i+1}-2\alpha_1^{i+1}-\alpha_3^{i+1}-\alpha_4^{i+1},\\
&\widetilde{\psi}(\alpha_2^i)-\widetilde{\psi}(\alpha_1^{i+1})=-\alpha_1^i-\alpha_2^{i}-\alpha_1^{i+1}-\alpha_2^{i+1}-\alpha_3^{i+1},\\
&\widetilde{\psi}(\alpha_3^i)-\widetilde{\psi}(\alpha_1^{i+1})=-\alpha_1^i-\alpha_2^{i}-\alpha_3^{i}-\alpha_4^i-\alpha_2^{i+1},\\
&\widetilde{\psi}(\alpha_4^{i})-\widetilde{\psi}(\alpha_1^{i+1})=-\alpha_1^i-\alpha_1^{i+1}-\alpha_2^{i+1}-\alpha_3^{i+1}-\alpha_4^{i+1},\\
&5\widetilde{\psi}(\alpha_1^{i+1})=4\alpha_1^i+3\alpha_2^i+2\alpha_3^i+\alpha_4^i+3\alpha_1^{i+1}+6\alpha_2^{i+1}+4\alpha_3^{i+1}+2\alpha_4^{i+1},\\
&\widetilde{\psi}(\alpha_2^{i+1})-\widetilde{\psi}(\alpha_1^{i+1})=-\alpha_1^i-\alpha_1^{i+1}-2\alpha_2^{i+1}-\alpha_3^{i+1},\\
&\widetilde{\psi}(\alpha_3^{i+1})-\widetilde{\psi}(\alpha_1^{i+1})=-\alpha_1^i-\alpha_2^{i}-\alpha_2^{i+1}-\alpha_3^{i+1}-\alpha_4^{i+1},\\
&\widetilde{\psi}(\alpha_4^{i+1})-\widetilde{\psi}(\alpha_1^{i+1})=-\alpha_1^i-\alpha_2^{i}-\alpha_3^i-\alpha_1^{i+1}-\alpha_2^{i+1}.\\
\end{aligned}
\]
Then the following integer matrix is obtained from the above coefficients:
\[
\begin{pmatrix}
0 & 0 & 0 & 0 & -1 & -2 & -1 & -1 \\
-1 & -1 & 0 & 0 & -1 & -1 & -1 & 0 \\
-1 & -1 & -1 & -1 & 0 & -1 & 0 & 0 \\
-1 & 0 & 0 & 0 & -1 & -1 & -1 & -1 \\
4 & 3 & 2 & 1 & 3 & 6 & 4 & 2 \\
-1 & 0 & 0 & 0 & -1 & -2 & -1 & 0 \\
-1 & -1 & 0 & 0 & 0 & -1 & -1 & -1 \\
-1 & -1 & -1 & 0 & -1 & -1 & 0 & 0
\end{pmatrix}.
\]
By computing the echelon form of this matrix over the integers, we have 
\[
\begin{pmatrix}
1 & 0 & 0 & 0 & 0 & 0 & 0 & 4 \\
0 & 1 & 0 & 0 & 0 & 0 & 0 & 4 \\
0 & 0 & 1 & 0 & 0 & 0 & 0 & 4 \\
0 & 0 & 0 & 1 & 0 & 0 & 0 & 4 \\
0 & 0 & 0 & 0 & 1 & 0 & 0 & 4 \\
0 & 0 & 0 & 0 & 0 & 1 & 0 & 4 \\
0 & 0 & 0 & 0 & 0 & 0 & 1 & 4 \\
0 & 0 & 0 & 0 & 0 & 0 & 0 & 5
\end{pmatrix}.
\]
This means that
$\alpha_l^{i+j}+4\alpha_4^{i+1} \in \widetilde{\psi}(L_A(C_0))$
for $0 \le j \le 1$ and $1 \le l \le 4$.
\end{proof}

\begin{lemma}\label{iso between Const A and B in p=5}
Let $C=d_5^m$ and $C_0=(d_5^m)_0$.  Then we have $L_A(C_0) \cong L_B(C)$ as lattices.
\end{lemma}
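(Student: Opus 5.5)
We will show that $\widetilde{\psi}(L_A(C_0)) = L_B(C)$. This suffices because $\widetilde{\psi}$ preserves the inner product (it acts by the isometry $\psi$ blockwise). The argument mirrors the proof of Lemma \ref{iso between Const A and B in p=3}, with Lemmas \ref{Const B p=5 constA} and \ref{const B  p=5 cal2} in place of their $p=3$ analogues.

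\emph{Step 1: the inclusion $\widetilde{\psi}(L_A(C_0)) \subset L_B(C)$.} By Lemma \ref{generating set: ConstA and B} (1), $L_A(C_0)$ is generated by $A_4^{2m}$ together with $\lambda_c$ for $c$ in the generating set $\{v_i+4v_m \mid 1 \le i \le m\}$ of $(d_5^m)_0$. Lemma \ref{Const B p=5 constA} already gives $\widetilde{\psi}(\alpha_l^i) \in L_B(C)$. It then remains to compute $\widetilde{\psi}(\lambda_{v_i})$ for a pair block $(2i-1,2i)$, i.e.\ $\psi((\varepsilon_1, 2\varepsilon_1-\alpha_1))$, using $\varepsilon_1=(1/5)(4\alpha_1+3\alpha_2+2\alpha_3+\alpha_4)$. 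We expect this vector to have the form $\lambda_{w}+r$, where $w \in d_5$ (a multiple of $(1,2)$) and $r \in A_4^2$ with $(r,\chi_\Delta)$ suitably controlled. Then $\widetilde{\psi}(\lambda_{v_i}-\lambda_{v_m})$ is congruent modulo $A_4^{2m}$ to an element $\lambda_{c'}$ with $c' \in d_5^m = C$. To conclude that it lies in $L_B(C)$, we check that its inner product with $\chi_\Delta$ is an integer. Since each $\alpha_l$ satisfies $(\alpha_l,\rho_\Delta)=1$, this reduces to a bookkeeping computation on the two blocks. The value for block $i$ cancels the value for block $m$ because of the difference, which is exactly why only $(d_5^m)_0$, and not $d_5^m$, maps into $L_B(C)$.

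\emph{Step 2: the reverse inclusion $L_B(C)\subset \widetilde{\psi}(L_A(C_0))$.} By Lemma \ref{generating set: ConstA and B} (2), it suffices to show two things.
\begin{enumerate}
\item $L_B(\{\boldsymbol{0}\}) \subset \widetilde{\psi}(L_A(C_0))$.
\item $\lambda_{v_i} \in \widetilde{\psi}(L_A(C_0))$ for all $i$.
\end{enumerate}
For (2), Lemma \ref{Const B p=5 constA} shows $\widetilde{\psi}(\alpha_1^{i+1}) = \varepsilon_1^i+2\varepsilon_1^{i+1}-\alpha_1^{i+1} = \lambda_{v_{(i+1)/2}}$ for odd $i$, so this is immediate.

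For (1), the lattice $L_B(\{\boldsymbol{0}\})$ is generated by $\alpha_l^k+4\alpha_4^{2m}$ for $1\le k\le 2m$ and $1\le l\le 4$; this is the sublattice of $A_4^{2m}$ on which the $\rho_\Delta$-pairing is $0 \bmod 5$. Lemma \ref{const B  p=5 cal2} provides $\alpha_l^{i+j}+4\alpha_4^{i+1}$ within each block. To link different blocks, we use the image of a generator $\lambda_{v_i}-\lambda_{v_m}$ (or $\lambda_{v_i+4v_m}$) corrected by suitable roots. The $p=3$ analogue used $\widetilde{\varphi}(u\text{-vector}-2\alpha)$, and here we choose an element of $L_A(C_0)$ whose $\widetilde{\psi}$-image lies in $A_4^{2m}$, for instance $\widetilde{\psi}(\lambda_{v_i}-c\,\alpha^{i'}_{l})$ for an appropriate correction. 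Its image is some $x_i - x_m$ with $x_i \in A_4^2$ supported on block $i$ and having $\rho_\Delta$-pairing nonzero mod $5$. Combining these with the intra-block elements then yields $\alpha_4^{2i}-\alpha_4^{2m}$, and hence all the generators $\alpha_l^k+4\alpha_4^{2m}$.

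The main obstacle is the explicit computation of $\psi((\varepsilon_1,2\varepsilon_1-\alpha_1))$ and the verification of the $\chi_\Delta$-integrality, together with finding the cross-block element in Step 2. Both are finite linear-algebra checks on a single $A_4^2$ block, analogous to the matrix echelon computation in Lemma \ref{const B  p=5 cal2}, and I would carry them out by writing everything in the $\alpha$-basis.
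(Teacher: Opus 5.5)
Your proposal is correct and follows the paper's proof essentially step for step. It proves the same two inclusions via the Construction A and B generating sets, uses the two preceding lemmas in the same way, and adds a cross-block vector to link the blocks inside $L_B(\{\boldsymbol{0}\})$.

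The computations you defer do check out:
\begin{itemize}
\item $\psi((\varepsilon_1,2\varepsilon_1-\alpha_1))=(3\varepsilon_1-2\alpha_1-\alpha_2,\ \varepsilon_1-\alpha_1-\alpha_2)$. Its glue is $(3,1)\in d_5$, and its $\chi_\Delta$-pairing is $3/5$ on every block, so the pairings cancel in $\lambda_{v_i}-\lambda_{v_m}$.
\item $x_j=\widetilde{\psi}(\lambda_{v_j}-3\alpha_1^{2j})$ lies in $A_4^2$ and has $\rho_\Delta$-pairing $-12\not\equiv 0 \pmod 5$.
\end{itemize}
The paper combines these cross-block vectors as $2x_i+3x_m$ rather than your $x_i-x_m$, which makes no difference.
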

\begin{proof}
We first prove that $\widetilde{\psi}(L_A(C_0)) \subset L_B(C)$. By Lemma \ref{generating set: ConstA and B} (1), it suffices to show that $\widetilde{\psi}(\alpha_j^i) \in L_B(C)$  for $ 1 \le i \le 2m$, $1 \le j \le 4$ and $\widetilde{\psi}(\lambda_c) \in L_B(C)$ for any $c \in \{v_i+4v_m \mid 1 \le i \le m\}$, where  $\lambda_c$ is as in 
(\ref{def of lambda}) and $v_i$ is as in (\ref{def of v}). 
By Lemma \ref{Const B p=5 constA}, we have  $\widetilde{\psi}(\alpha_l^i) \in L_B(C)$  for $ 1 \le i \le 2m$ and $1 \le l \le 4$. 
Let $i \in \{2k+1 \mid 0 \le k \le m-1\}$.
Note that $\lambda_1 =\varepsilon_1$, $\lambda_2=2\varepsilon_1-\alpha_1$, $\lambda_3=3\varepsilon_1-2\alpha_1-\alpha_2$. Since
\[\widetilde{\psi}(\varepsilon_1^i+2\varepsilon_1^{i+1}-\alpha_1^{i+1})=(3\varepsilon_1^i-2\alpha_1^i-\alpha_2^i+\varepsilon_1^{i+1})+(-\alpha_1^{i+1}-\alpha_2^{i+1}),\]
we see that $\widetilde{\psi}(\lambda_c) \in L_B(C)$ for any $c \in \{v_i+4v_m \mid 1 \le i \le m\}$. 
 
 Next, we prove that $L_B(C) \subset \widetilde{\psi}(L_A(C_0))$. By Lemma \ref{generating set: ConstA and B} (2), it suffices to show that $L_B(\{\boldsymbol{0}\}) \subset  \widetilde{\psi}(L_A(C_0))$ and $\lambda_c \in \widetilde{\psi}(L_A(C_0))$ for any $c \in \{v_i \mid 1\le i \le m\}$.  
We verify $L_B(\{\boldsymbol{0}\}) \subset  \widetilde{\psi}(L_A(C_0))$. Since $L_B(\{\boldsymbol{0}\})=\langle \alpha_l^k+4\alpha_4^{2m} \mid 1 \le k \le 2m, 1\le l \le 4 \rangle$, it suffices to show that  $\alpha_l^k+4\alpha_4^{2m} \in \widetilde{\psi}(L_A(C_0))$ for $1 \le k \le 2m$ and $1\le l \le 4$. 
By Lemma \ref{const B  p=5 cal2}, 
we see that
\begin{equation}\label{equ const B p=5}
\alpha_l^{i+j}+4\alpha_4^{i+1} \in \widetilde{\psi}(L_A(C_0))
\end{equation}
for $0 \le j \le 1$ and $1 \le l \le 4$.
Since
\[\widetilde{\psi}((\varepsilon_1^i+2\varepsilon_1^{i+1}-\alpha_1^{i+1})-3\alpha_1^{i+1})=-2\alpha_1^i-\alpha_2^i-2\alpha_1^{i+1}-4\alpha_2^{i+1}-2\alpha_3^{i+1}-\alpha_4^{i+1},\]
we have
$2x_i+3x_{m-1} \in \widetilde{\psi}(L_A(C_0))$,
where $x_i=-2\alpha_1^i-\alpha_2^i-2\alpha_1^{i+1}-4\alpha_2^{i+1}-2\alpha_3^{i+1}-\alpha_4^{i+1}.$
Combining this with (\ref{equ const B p=5}), we have
$\alpha_l^k+4\alpha_4^{2m} \in \widetilde{\psi}(L_A(C_0))$ for $1 \le k \le 2m$ and $1\le l \le 4$. 
Moreover, since  $L_B(\{\boldsymbol{0}\}) \subset  \widetilde{\psi}(L_A(C_0))$ and 
$\widetilde{\psi}(\alpha_1^{i+1})=\varepsilon_1^i+2\varepsilon_1^{i+1}-\alpha_1^{i+1} \in \widetilde{\psi}(L_A(C_0)),$
we also see that $\lambda_c \in \widetilde{\psi}(L_A(C_0))$ for any $c \in \{v_i \mid 1 \le i \le m\}$.
\end{proof}

By \cite{HM2}, we obtain a classification of the maximal self-orthogonal codes over $\mathbb{F}_5$ of length at most $5$. 
By using MAGMA with this classification, we have the following lemma: 

\begin{lemma}\label{Const B p=5 length at most 5}
Let $k$ be an integer such that $1 \le k \le 5$ and $C,D$ self-orthogonal codes of length $k$ over $\mathbb{F}_5$.
If $L_B(C) \cong L_B(D)$ as lattices, then we have $C \cong D$ as codes.
\end{lemma}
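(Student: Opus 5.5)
This lemma is a finite check, parallel to Lemma~\ref{Const B p=3 length at most 9}, and I would prove it by computer enumeration in MAGMA. First I list all self-orthogonal codes over $\mathbb{F}_5$ of length $k\le 5$ up to isomorphism. Over $\mathbb{F}_5$ a self-orthogonal code of length $k$ has dimension at most $\lfloor k/2\rfloor\le 2$, and every self-orthogonal code lies in a maximal one. So I start from the classification of maximal self-orthogonal codes of length at most $5$ in \cite{HM2}. For each maximal code I enumerate its subspaces. I then reduce modulo the monomial group $\langle \sigma_i,\sigma\rangle$ of the paper, i.e.\ permutations together with sign changes $\pm1$. This is coarser than the full monomial group over $\mathbb{F}_5$, so the equivalence test must use exactly this group; MAGMA handles it by testing isomorphism under a permutation group acting on $\pm$-coordinates. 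The result is a finite list of representatives $C_1,\dots,C_N$ for each $k$.

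Next, for each representative I build $L_B(C_j)$ explicitly. By Lemma~\ref{generating set: ConstA and B}~(2), it is generated by $L_B(\{\boldsymbol{0}\})$ together with the $\lambda_c$ for $c$ in a basis of $C_j$. Here $L_B(\{\boldsymbol{0}\})$ is spanned by the vectors $\alpha_l^i+4\alpha_4^{k}$ and $5\alpha_4^k$, inside $(A_4^*)^k$. I write these in coordinates of $\mathbb{Q}^{5k}$ and compute a Gram matrix. Since the lemma only concerns codes of the same length $k$, all these lattices have rank $4k\le 20$.

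Finally, for each pair $i\ne j$ I show $L_B(C_i)\not\cong L_B(C_j)$. I would first separate pairs by cheap invariants: determinant, which depends only on $\dim C$ via Lemma~\ref{Structures of the dual lattices of Construction B}; the number of roots $\#L_B(C)(2)$; the theta series up to a modest norm; and the orders of $\operatorname{Aut}(L_B(C))$. Only pairs that survive all of these get MAGMA's IsIsometric test. By Lemma~\ref{lower bound 2} and the norm bounds of Section~2, most vectors of small norm lie in specific cosets, so I expect theta-series coefficients, together with root-system types, to distinguish nearly everything.

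The main obstacle is the first step: getting the complete list of codes up to isomorphism under the signed-permutation group, not the full monomial group. Since scalar multiplication by $2$ is not in the group, codes equivalent under the full monomial group may split into several classes, and each of these classes must be checked. The lattice computations in rank at most $20$ are routine for MAGMA.
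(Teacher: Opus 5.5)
Your proposal is correct and follows the paper's route. The paper likewise starts from the classification of maximal self-orthogonal codes over $\mathbb{F}_5$ of length at most $5$ in \cite{HM2} and proves the lemma by a MAGMA computation on the lattices $L_B(C)$; your added details (the generators of $L_B(\{\boldsymbol{0}\})$, the discriminant $5^{k-2\dim C+2}$ depending only on $\dim C$, and the need to use the signed-permutation group rather than the full monomial group) are accurate. One wording slip: $\langle \sigma_i,\sigma\rangle$ is a subgroup of the full monomial group, so it gives a \emph{finer}, not coarser, equivalence relation, which is exactly why full-monomial classes can split, as you yourself note.
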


\subsection{Preparations for the cases $p=7,11$}
In this subsection, we prove some lemmas for self-orthogonal codes over $\mathbb{F}_7$ and $\mathbb{F}_{11}$.
Let $p$ be $7$ or $11$.
\begin{lemma}\label{Const B p=7,11 length at most 3,2}
Let $k$ be an integer %
and $C,D$ self-orthogonal codes of length $k$ over $\mathbb{F}_p$.
We suppose that $L_B(C) \cong L_B(D)$ as lattices. Then the following hold:
\begin{enumerate}
    \item If $p=7$ and $1 \le k \le 3$, then $C \cong D$ as codes.
    \item If $p=11$ and $1 \le k \le 2$, then $C \cong D$ as codes.
\end{enumerate}
\end{lemma}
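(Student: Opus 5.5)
The plan is to reduce the statement to a finite enumeration, as in Lemmas~\ref{Const B p=3 length at most 9} and~\ref{Const B p=5 length at most 5}. First I determine which self-orthogonal codes occur. Over $\mathbb{F}_p$ with $p\equiv 3 \pmod 4$ (both $p=7$ and $p=11$), $-1$ is a non-square, so there is no nonzero self-orthogonal vector of weight $1$ or $2$: the condition $x_1^2+x_2^2=0$ forces $x_1=x_2=0$.

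For weight $3$, the condition $x_1^2+x_2^2+x_3^2=0$ does have nonzero solutions over $\mathbb{F}_7$, for example $(1,2,3)$, since $1+4+9=14\equiv 0$. A code of length $3$ is self-orthogonal only if its dimension is at most $1$. Hence the self-orthogonal codes are:
\begin{itemize}
\item for $p=11$, $k\le 2$: only $\{\boldsymbol{0}\}$;
\item for $p=7$, $k\le 2$: only $\{\boldsymbol{0}\}$;
\item for $p=7$, $k=3$: $\{\boldsymbol{0}\}$ or a one-dimensional code $\langle c\rangle$ with $c\in C(3)$.
\end{itemize}

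Next I classify the one-dimensional codes up to the code equivalence defined in the paper, which uses sign changes and permutations but not scalar multiples of coordinates. A normalized generator has the form $(1,a,b)$ with $1+a^2+b^2=0$. Under $\sigma_i$ and $S_3$, all such lines are equivalent up to sign. The squares in $\mathbb{F}_7$ are $\{1,2,4\}$, and the only way to write $0$ as a sum of three nonzero squares is $1+2+4$. So the squared entries form the set $\{1,2,4\}$, which forces the entries to be $\pm1,\pm3,\pm2$ (using $3^2=2$ and $2^2=4$), and the line is equivalent to $\langle(1,2,3)\rangle$.

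Consequently, in every case of the statement, the self-orthogonal codes of the given length form at most two equivalence classes: $\{\boldsymbol{0}\}$, and for $(p,k)=(7,3)$ also $\langle(1,2,3)\rangle$. It therefore suffices to show that $L_B(\{\boldsymbol{0}\})\not\cong L_B(\langle(1,2,3)\rangle)$ when $p=7$, $k=3$.

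These two lattices can be told apart by their determinants. $L_B(C)$ has index $p$ in $L_A(C)$, because $(\lambda,\chi_\Delta)\in\frac1p\mathbb{Z}$ and the value $\frac1p$ is attained. The index $[L_A(C):A_{p-1}^k]$ equals $|C|$. So
\[
\det L_B(C)=p^k\cdot p^2/|C|^2 ,
\]
which gives $7^5$ for $C=\{\boldsymbol{0}\}$ and $7^3$ for $C=\langle(1,2,3)\rangle$. The determinants differ, so the lattices are non-isomorphic.

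I expect the main obstacle to be verifying the index claim $[L_A(C):L_B(C)]=p$. For this I would check that $(\alpha_l^i,\chi_\Delta)=1/p$, using $(\alpha_l,\rho_\Delta)=1$. That alone already shows $L_B(\{\boldsymbol{0}\})\subsetneq A_{p-1}^k$ with index exactly $p$, and the same holds for any $C$, since $A_{p-1}^k\subset L_A(C)$. As a cross-check, Lemma~\ref{Structures of the dual lattices of Construction B} gives the same determinant.
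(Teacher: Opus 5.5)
Your proposal is correct and follows the paper's route: you classify the self-orthogonal codes of these lengths (only $\{\boldsymbol{0}\}$, plus $\langle(1,2,3)\rangle$ when $(p,k)=(7,3)$), and you also make explicit a step the paper leaves implicit, namely $L_B(\{\boldsymbol{0}\})\not\cong L_B(\langle(1,2,3)\rangle)$ because the determinants are $7^5\neq 7^3$. The one point to tighten is the upper bound $[L_A(C):L_B(C)]\le p$: it does not follow from $A_{p-1}^k\subset L_A(C)$, but from $\rho_\Delta=\sum_{i}\bigl(\frac{p+1}{2}-i\bigr)e_i\in A_{p-1}$ for odd $p$, which gives $(L_A(C),\chi_\Delta)\subset\frac1p\mathbb{Z}$ (without this, the index could a priori be $p^2$ and both determinants would equal $7^5$).
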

\begin{proof}
(1) We classify all self-orthogonal codes $C$ of length $k$ over $\mathbb{F}_7$.  
Since $C \subset C^{\perp}$ and $\operatorname{dim}C + \operatorname{dim}C^{\perp}=k$, we have $\operatorname{dim}C=0,1$.
If $\operatorname{dim}C=0$, then we see that $C=\{\boldsymbol{0}\}$. Hence, we consider the case where $\operatorname{dim}C=1$. Then we have $C=\langle (1,2,3) \rangle$, up to isomorphism.
Therefore,  if $L_B(C) \cong L_B(D)$ as lattices, then we have $C \cong D$ as codes. 

\noindent (2)
We classify all self-orthogonal codes $C$ of length $k$ over $\mathbb{F}_{11}$.  
Imitating the above proof, we have $\operatorname{dim}C=0,1$. By direct calculations, we see that there are no self-orthogonal codes with $\operatorname{dim}C=1$. Hence, if $L_B(C) \cong L_B(D)$ as lattices, then we have $C \cong D$ as codes.
\end{proof}

\subsection{Proof of Theorem \ref{Answer of iso prob for Const B}}
In this subsection, we prove Theorem \ref{Answer of iso prob for Const B}.
\begin{proof}[Proof of Theorem \ref{Answer of iso prob for Const B}]
If $C \cong D$ as codes, then we clearly have $L_B(C) \cong L_B(D)$ as lattices. Hence, we prove that if $L_B(C) \cong L_B(D)$ as lattices, then $C \cong D$ as codes.

Let $R=A_{p-1}^k$. Let $\alpha_j^{i}$ and $\varepsilon_j^{i}$ denote the elements $\alpha_j$ and $\varepsilon_j$ in the $i$-th coordinate, respectively. We suppose that $f$ is an isometry from $L_B(C)$ to $L_B(D)$. Then the isometry $f$ naturally induces an isometry from $L_B(C)^{*}$ to $L_B(D)^{*}$. By Lemma \ref{Structures of the dual lattices of Construction B},  
\[L_B(C)^{*}=L_A(C^{\perp})+\mathbb{Z}\chi_{\Delta}\ \text{and}\ L_B(D)^{*}=L_A(D^{\perp})+\mathbb{Z}\chi_{\Delta}.\] 
We consider the two cases  $f(\alpha_1^1) \in L_A(D^{\perp})$ and $f(\alpha_1^1) \notin L_A(D^{\perp})$.

\noindent (I) The case $f(\alpha_1^1) \in L_A(D^{\perp})$

By Lemmas~\ref{minimal norm}~and~\ref{lower bound}, we have $f(\alpha_1^1) \in R(2)$ for $p \ge 7$. 
If $f(\alpha_1^1) \in R(2)$, then we have $f(L_A(C)) = L_A(D)$. 
In this case, Theorem \ref{answer of isom prob for const A} implies $C \cong D$ as codes. Hence, we consider the case $f(\alpha_1^1) \notin R(2)$. Since $f$ is an isometry, we have
\begin{align}
\label{the number of root in the proof0} \#(L_B(C)(2))&=\#(L_B(D)(2)), \\
\label{the number of root in the proof} \#(\alpha_1^1+L_B(C))(2)&=\#(f(\alpha_1^1)+L_B(D))(2).
\end{align}

\noindent (i) The case $p=3$

We may assume that 
$f(\alpha_1^1)=\varepsilon_1^1+\varepsilon_1^2+\varepsilon_1^3$.
By (\ref{the number of root in the proof0}), we have $\#C(3)=\#D(3)$. Hence, by (\ref{the number of root in the proof}), we obtain 
\[3k+9 \cdot\#D(3)=9 \cdot \#(u_1+D)(3),\]
where $u_1=(1,1,1,0,\ldots,0) \in \mathbb{F}_3^k$.
By Lemma \ref{another criterion for Const B in p=3},  we have $3 \mid k$ and there exists a self-orthogonal code $K \subset K_3^{k/3}$ such that
$C_B(K) \cong D\ \text{and}\ C_A(K) \cong D+\mathbb{F}_3u_1$.
Hence
\begin{align*}
L_A(C)
&=L_B(C) \cup (\alpha_1^1+L_B(C)) \cup (2\alpha_1^1+L_B(C)) \\
&\cong L_B(D) \cup (f(\alpha_1^1)+L_B(D)) \cup (2f(\alpha_1^1)+L_B(D)) \\
&= L_B(D+\mathbb{F}_3u_1) \\
&\cong L_B(C_A(K)). \\
\end{align*}
By Lemma \ref{iso between Const A and B in p=3}, since $L_B(C_A(K)) \cong L_A(C_B(K))$, we have $L_A(C) \cong L_A(D).$
By Theorem \ref{answer of isom prob for const A}, we obtain $C \cong D$ as codes.

\noindent (ii) The case $p=5$

This case is similar to the case $p=3$.
We may assume that 
$f(\alpha_1^1)=\varepsilon_1^1+2\varepsilon_1^2-\alpha_1^2$.
By (\ref{the number of root in the proof0}), we have $\#C(2)=\#D(2)$. Hence, by (\ref{the number of root in the proof}), we obtain 
\[5k+10\cdot\#D(2)=10 \cdot \#(v_1+D)(2),\]
where $v_1=(1,2,0,0,\ldots,0) \in \mathbb{F}_5^k$.
By Lemma \ref{another criterion for Const B in p=5}, we have $2 \mid k$ and 
$(d_5^m)_0 \cong D, d_5^m \cong D+\mathbb{F}_5v_1$.
Hence
\begin{align*}
L_A(C)
&= \bigcup_{0 \le i \le 4} (i\alpha_1^1+L_B(C)) \\
&\cong  \bigcup_{0 \le i \le 4} (if(\alpha_1^1)+L_B(D))\\ 
&= L_B(D+\mathbb{F}_5v_1) \\
&\cong L_B(d_5^m). \\
\end{align*}
By Lemma \ref{iso between Const A and B in p=5}, since $L_B(d_5^m) \cong L_A((d_5^m)_0)$, we have $L_A(C) \cong L_A(D).$
By Theorem \ref{answer of isom prob for const A}, we obtain $C \cong D$ as codes.

\noindent (II) The case $f(\alpha_1^1) \notin L_A(D^{\perp})$

Since $f(\alpha_1^1) \notin L_A(D^{\perp})$, there exists an integer $l \in \{1,2,\ldots, p-1\}$ such that $f(\alpha_1^1) \in L_A(D^{\perp}) + l\chi_{\Delta}.$ 
By Lemma \ref{lower bound 2}, we have 
\[N_1(\frac{l}{p}\rho_{\Delta}+A_{p-1}^*) \ge \frac{(p-1)(p+1)}{12p}.\]
Note that $N_1((l/p)\rho_{\Delta}+A_{p-1}^*) \ge  (p+1)/p$ if $p \ge 13$. Hence, we consider the cases $p =3,5,7,11$. By Lemma \ref{lower bound 2}, we have the following:
\[
\begin{aligned}
N_1((l/3)\rho_{\Delta}+A_2^*) &\ge 2/9\ \text{if}\ p=3,\\  
N_1((l/5)\rho_{\Delta}+A_4^*) &\ge 2/5\ \text{if}\ p=5,\\
N_1((l/7)\rho_{\Delta}+A_6^*) &\ge 4/7\ \text{if}\ p=7,\\
N_1((l/11)\rho_{\Delta}+A_{10}^*) &\ge 10/11\ \text{if}\ p=11.
\end{aligned}
\]
Hence, we obtain the following conditions of the length $k$:
\[
\begin{aligned}
&k\le 9\ \text{if}\ p=3,\  
k\le 5\ \text{if}\ p=5,\\
&k\le 3\ \text{if}\ p=7,\
k\le 2\ \text{if}\ p=11.
\end{aligned}
\]
By Lemmas \ref{Const B p=3 length at most 9}, \ref{Const B p=5 length at most 5}, and \ref{Const B p=7,11 length at most 3,2}, we have $C \cong D$ as codes.
In conclusion, we obtain the desired result that  if $L_B(C) \cong L_B(D)$ as lattices, then $C \cong D$ as codes.

%
%
%
%
%
%
%
%
\end{proof}

\paragraph{\textbf{Acknowledgments}}
I would like to express my gratitude to my supervisor Professor Kazuya Kawasetsu for support and encouragement.
I also wish to express my gratitude to Professor Koichi Betsumiya, who is my former supervisor, for useful comments on codes and lattices. 
I thank Professor Hiroki Shimakura for encouragement and useful comments.
I also thank Professor Akihiro Munemasa for giving me a database of self-orthogonal codes over $\mathbb{F}_5$.
This work was supported by JST SPRING, Grant Number JPMJSP2127.

\end{document}